\newtheorem{theorem}{Theorem}[section]
\newtheorem{lemma}[theorem]{Lemma}
\newtheorem{proposition}[theorem]{Proposition}
\newtheorem{corollary}[theorem]{Corollary}
\theoremstyle{definition}
\newtheorem{definition}[theorem]{Definition}
\newtheorem{example}[theorem]{Example}
\theoremstyle{remark}
\newtheorem{remark}[theorem]{Remark}
\numberwithin{equation}{section}
\newcommand{\abs}[1]{\lvert#1\rvert}
\newcommand{\NN}{\mathbb{N}}
\newcommand{\ZZ}{\mathbb{Z}}
\newcommand{\QQ}{\mathbb{Q}}
\newcommand{\RR}{\mathbb{R}}
\newcommand{\CC}{\mathbb{C}}
\newcommand{\KK}{\mathbb{K}}
\title{Contact big fiber theorems}
\date{\today}
\author{Yuhan Sun}
\address{Huxley Building,
South Kensington Campus,
Imperial College London,
London, SW7 2AZ, U.K}
\email{yuhan.sun@imperial.ac.uk}
\author{Igor Uljarevi\'c}
\address{Faculty of Mathematics, University of Belgrade, Studentski trg 16, Belgrade, Serbia}
\email{igor.uljarevic@matf.bg.ac.rs}
\author{Umut Varolgunes}
\address{Mathematics Department, Koç University Rumelifeneri Campus, Istanbul,Turkey}
\email{uvarolgunes@ku.edu.tr}
\begin{document}

\begin{abstract}
  We prove contact big fiber theorems, analogous to the symplectic big fiber theorem by Entov and Polterovich,  using symplectic cohomology with support. Unlike in the symplectic case, the validity of the statements requires conditions on the closed contact manifold. One such condition is to admit a Liouville filling with non-zero symplectic cohomology. In the case of Boothby-Wang contact manifolds, we prove the result under the condition that the Euler class of the circle bundle, which is the negative of an integral lift of the symplectic class, is not an invertible element in the quantum cohomology of the base symplectic manifold. As applications, we obtain new examples of rigidity of intersections in contact manifolds and also of contact non-squeezing. 
\end{abstract}

\maketitle

\tableofcontents

\section{Introduction}

Consider a closed symplectic manifold $M$ and a smooth map $\pi=(\pi_1,\ldots,\pi_N): M\to \mathbb{R}^N$ whose components $\pi_1,\ldots,\pi_N$ pairwise Poisson commute. The celebrated symplectic big fiber theorem of Entov-Polterovich says that $\pi$ admits at least one fiber that is not displaceable from itself by a Hamiltonian diffeomorphism \cite{EP06}. In this paper, we prove contact versions of this theorem.

Let $(C,\xi)$ be a closed contact manifold and consider a smooth map $\pi: C \to \mathbb{R}^N$. Recall that once a contact form is chosen, we can define the Jacobi bracket\footnote{Recall that the Jacobi bracket $[f,g]$ of two smooth functions $f,g:C\to\mathbb{R}$ is defined as $[f,g]:= \alpha([X_f, X_g])$, where $\alpha$ is the chosen contact form and $X_f$ stands for the contact Hamiltonian vector field of $f$ with respect to $\alpha$.} on smooth functions of $C$. We assume that for some contact form $\alpha$ on $C$ the components of $\pi$ are invariant under the Reeb flow and they pairwise Jacobi commute (cf. contact integrable systems \cite{KT, JJ}). Let us call such a $\pi$ a contact involutive map witnessed by $\alpha$. We say that a subset of a contact manifold is contact displaceable if it is  displaceable from itself by a contact isotopy throughout the paper. The standard contact sphere $(S^{2n-1},\xi_{st})$, for $n\geqslant 2$, shows that without any condition there can be contact involutive maps whose fibers are all contact displaceable. Indeed, any non-constant smooth Reeb invariant function\footnote{Such a function can be constructed by lifting a smooth function from $\mathbb{C}P^{n-1}$, via the Hopf fibration.} on $S^{2n-1}$ can be seen as a contact involutive map whose all fibers are compact and different from the whole $S^{2n-1}$. Since the standard $S^{2n-1}$ without a point is contactomorphic to the standard $\mathbb{R}^{2n-1}$ and since any compact subset of the standard $\mathbb{R}^{2n-1}$ can be displaced by a compactly supported contact isotopy, this implies that there exist contact involutive maps on the standard $S^{2n-1}$, for $n\geqslant 2$ violating the conclusion of a potential contact big fiber theorem. 

Among our results, the easiest one to state is that if $C$ admits a Liouville filling with non-vanishing symplectic cohomology, then at least one fiber of $\pi$ is contact non-displaceable. In fact, we show that there must be a self-intersection point of this non-displaceable fiber that has $\alpha$-\emph{conformal factor} equal to $1.$ We call this the contact big fiber theorem.

We prove a contact big fiber theorem when $(C,\alpha)$ is a prequantization circle bundle over a symplectic manifold $D$ with an integral lift $\sigma$ of the symplectic class. We assume the technical conditions that are required for the Floer Gysin sequence of \cite{AK, BKK} to hold true and moreover, crucially, that quantum multiplication with $\sigma$ is not an invertible operator on $H^*(D; \Bbbk)[T, T^{-1}]$ for some commutative ring $\Bbbk.$ Under this condition, we also obtain results on non-displacability of Legendrians from their Reeb closures.

\begin{remark}  During the course of writing our result, we learned that an analogous result was conjectured by Albers-Shelukhin-Zapolsky around 2017.
    A closely related published work is by Borman and Zapolsky \cite{BZ}. They use Givental's nonlinear Maslov index \cite{Giv} to prove contact big fiber theorems \cite[Theorem 1.17]{BZ} for certain prequantization bundles over toric manifolds. Later Granja-Karshon-Pabiniak-Sandon \cite{GKPS} give an analogue for lens spaces of Givental’s construction. Our results here do not require the contact manifold to be a toric prequantization bundle.
\end{remark}

Our proof utilizes what one might call descent for Rabinowitz Floer homology. Let $\pi: C\to \RR^N$ be a contact involutive map witnessed by a contact form $\alpha.$ For simplicity of discussion at this point, consider a Liouville filling $\bar{M}$ of $(C,\alpha)$. Let ${M}$ be the result of attaching the semi-infinite symplectization of $(C,\alpha)$ to $\bar{M}.$ From our viewpoint, Rabinowitz Floer homology is nothing but the symplectic cohomology with support\footnote{Symplectic cohomology with support, also known as \emph{relative symplectic cohomology}, is discussed in Section~\ref{sec:SHwSupp}, and in more details in Section~\ref{sec:def-sh}.} on $\partial\bar{M}$ inside ${M}$, which enjoys a local-to-global principle. A finite cover of $C$ by preimages of compact subsets from the base of $\pi$ gives a Poisson commuting cover of $\partial\bar{M}$ inside ${M}$. A contact displaceable set in $C$ is Hamiltonian displaceable in $M$. Finally, we use the standard argument that displaceability implies the vanishing of invariant and the Mayer-Vietoris property to conclude that Rabinowitz Floer homology, and therefore, symplectic cohomology of $M$ vanishes. 
\newline

Now we go into a more detailed discussion of our general tools and results. Various concrete applications will be shown in Section \ref{sec:app}.

\subsection{Symplectic cohomology with support}\label{sec:SHwSupp} 

Let us fix a symplectic manifold $({M}^{2n},\omega)$ which is convex at infinity, which means that it admits a symplectic embedding of $([1,+\infty)\times C, d(r\alpha))$ with pre-compact complement, where $C^{2n-1}$ is a closed manifold with contact form $\alpha$ and $r$ denotes the coordinate on $[1,+\infty).$

Let $\Bbbk$ be a commutative ring. For any compact subset $K$ of $M$, we can define the symplectic cohomology of $M$ with support on $K$, denoted by $SH_M^*(K)$ \cite{Var21,GV,BSV}. See Section \ref{sec:def-sh} for the precise definition. We note that we will only consider contractible $1$-periodic orbits in the construction. Let us remind the reader of some basic properties.

\begin{enumerate}

      \item If $K$ is displaceable from itself in $M$ by a Hamiltonian diffeomorphism, then $SH_M^*(K)=0$, see \cite[Section 4.2]{Var} or \cite[Theorem 3.6]{BSV}.
        \item Assume that $K$ is a compact Liouville subdomain of a complete Liouville manifold $M$. Then $SH^*_M(K)$ is the Viterbo symplectic cohomology of the symplectic completion of $K$ \cite{Se,Vit}. Restriction maps agree with the Viterbo transfer maps. See Appendix \ref{sec:comparison-1}.
        \item Assume that $\Bbbk$ is a field and $K$ is a compact Liouville subdomain of a complete Liouville manifold $M$. Then $SH^*_M(\partial K)$ is isomorphic to the Rabinowitz Floer homology of $\partial K\subset K$ as defined by Cieliebak-Frauenfelder-Oancea \cite{CFO,CO}. See Appendix \ref{sec:comparison-2}.
\end{enumerate}

We now explain the most useful property of symplectic cohomology with support for this paper. It is a combination of the Mayer-Vietoris principle \cite{Var21} and the displacement property.

\begin{definition}\label{d:weakly1}
    A collection of compact subsets $K_1,\cdots, K_n$ of $M$ is called Poisson commuting if there exist open neighborhoods $U_{m,i}\supset K_m$ and smooth functions $f_{m,i}:U_{m,i}\to \RR$ for $m\in\{1,\cdots,n\}$ and $i\in\ZZ_{+}$ such that
    \begin{enumerate}
        \item $K_m=\bigcap_{i=1}^\infty \{f_{m,i}< 0\},$ for all $m.$
        \item $\{f_{m,i}\leq \epsilon\}\subset U_{m,i}$ is compact for some $\epsilon >0$.
        \item $f_{m,i}<f_{m,i+1}$ on $U_{m,i}\cap U_{m,i+1}$ for all ${m,i}$.
        \item The Poisson bracket $\{f_{m,i},f_{m',i}\}=0$ on $U_{m,i}\cap U_{m',i}$ for all $m,m',i$.
    \end{enumerate}
\end{definition}

\begin{theorem}\label{t:main rel}
    Let ${K}_1, \cdots,{K}_N$ be a Poisson commuting collection of compact subsets of $M$. Let $K:=K_1\cup\ldots\cup K_N.$ If $SH^*_M(K)\neq 0,$ then at least one of ${K}_1, \cdots,{K}_N$ is not displaceable from itself by a Hamiltonian diffeomorphism.
\end{theorem}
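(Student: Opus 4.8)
The plan is to combine the two black-box inputs the authors have flagged: the Mayer--Vietoris (local-to-global) property of symplectic cohomology with support and the displacement vanishing property. I would argue by contradiction: assume each $K_m$ is displaceable from itself by a Hamiltonian diffeomorphism, and derive $SH^*_M(K) = 0$.

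First I would set up the Mayer--Vietoris machinery on the given Poisson commuting collection. The data $f_{m,i}$ with $K_m = \bigcap_i \{f_{m,i} < 0\}$ exhibit each $K_m$ as a decreasing intersection of the compact ``sublevel'' domains, and the commutativity condition $\{f_{m,i}, f_{m',i}\} = 0$ on overlaps means that for each fixed $i$ the functions $f_{1,i}, \dots, f_{N,i}$ fit together into a single Poisson-commuting tuple near $K$. The key point is that Poisson commutativity is exactly what is needed to run the acyclicity/descent argument of \cite{Var21}: one builds, for each subset $S \subseteq \{1, \dots, N\}$, the intersection $K_S = \bigcap_{m \in S} K_m$ (which is again presented as a decreasing intersection of compact Poisson-bracket-compatible sublevel sets), and obtains a spectral sequence — or, more simply, an exact Mayer--Vietoris triangle iterated over the $N$ pieces — converging to $SH^*_M(K)$ with input the groups $SH^*_M(K_S)$. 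Here I would invoke the continuity/restriction formalism so that $SH^*_M(K_m) = \varinjlim_i SH^*_M(\{f_{m,i} \le 0\})$ and similarly for the intersections, so that everything is expressed via genuine compact subsets to which the basic properties apply.

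Next comes the displacement step. If $K_m$ is displaceable from itself by a Hamiltonian diffeomorphism $\phi$, then since displaceability is an open condition and $K_m = \bigcap_i \{f_{m,i} \le 0\}$ with the sublevel sets compact and nested, some $\{f_{m,i} \le 0\}$ is already displaced by $\phi$; hence by property (1) of the basic properties, $SH^*_M(\{f_{m,i} \le 0\}) = 0$ for all large $i$, and passing to the colimit $SH^*_M(K_m) = 0$. Moreover any intersection $K_S$ with $m \in S$ is contained in $K_m$, hence also displaceable, hence $SH^*_M(K_S) = 0$ as well. Thus in the Mayer--Vietoris spectral sequence every single term $SH^*_M(K_S)$ for $\emptyset \ne S$ vanishes — and the only term that could contribute, were it nonzero, are these — so the whole thing collapses and $SH^*_M(K) = 0$, contradicting the hypothesis. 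Therefore at least one $K_m$ is non-displaceable.

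The main obstacle is making the Mayer--Vietoris / descent step rigorous in the form needed here: \cite{Var21} is stated for the relevant setting, but one must check that the Poisson commuting hypothesis of Definition~\ref{d:weakly1} is precisely the hypothesis under which the local-to-global principle holds, and that the bookkeeping over all $2^N - 1$ nonempty subsets $S$ (the ``nerve'' of the cover) is valid — in particular that the cover $\{K_m\}$ behaves like a genuine open-type cover at the level of $SH^*_M$, with the sublevel-set exhaustions providing the needed compactness in conditions (2) and (3). A secondary technical point is the interchange of colimits (over the $i$-exhaustions) with the finite Mayer--Vietoris assembly, which should be harmless since the assembly involves only finitely many pieces and the colimits are filtered, but it needs a sentence. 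Once those are in place the displacement input is entirely standard.
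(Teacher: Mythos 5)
Your argument is correct and is exactly the one the paper has in mind: the statement is presented in the paper immediately after the remark that it is ``a combination of the Mayer--Vietoris principle \cite{Var21} and the displacement property,'' and no separate proof is given. Your two inputs (displaceability $\Rightarrow$ vanishing of $SH^*_M$, and the iterated Mayer--Vietoris over the nerve of the Poisson-commuting cover, using that each intersection $K_S$ is contained in some displaceable $K_m$ and is therefore itself displaceable) are precisely the intended ones, and the contrapositive-plus-induction structure matches the way the theorem is used in the proofs of Corollaries \ref{c:Liouville big fiber} and \ref{c:contact big fiber}.
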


We now fix a codimension zero submanifold $\bar{M}\subset M$ with convex boundary, which gives rise to a decomposition 
$$
M:= \bar{M}\cup_{\partial\bar{M}} ([1,+\infty)\times \partial\bar{M}).
$$
Motivated by the \textit{selective symplectic homology} \cite{U}, it is possible to define symplectic cohomology with support on non-compact subsets which are invariant under the Liouville flow at infinity. In this article we only use two such sets of the simplest form: $M$ itself and $K_M:=[1,+\infty)\times \partial\bar{M}$. The following Mayer-Vietoris sequence can be proved using the strategy of \cite{Var21}.

\begin{theorem}\label{thm-rfh-sh-vanish}
There is an exact sequence of $\Bbbk$-modules
$$
\cdots \to SH^k_M(M) \to SH^k_M(\bar{M})\oplus SH^k_M(K_M)\to SH^k_M(\partial\bar{M})\to \cdots,
$$ 
where $SH^*_M(M)$ is isomorphic to $H^*_c(M;\Lambda)$, the cohomology with compact support of $M$. 

Moreover, if we equip $H^*_c(M;\Lambda)$ with the quantum product, the map $H^*_c(M;\Lambda)\cong SH^*_M(M)\to SH^*_M(\bar{M})\cong SH^*(M)$ is a map of algebras.
\end{theorem}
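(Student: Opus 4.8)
The plan is to realise the displayed sequence as the Mayer--Vietoris (mapping cone) exact sequence associated to the cover $M = \bar M \cup K_M$, whose overlap is a collar of $\partial\bar M$, by adapting the strategy of \cite{Var21} for relative symplectic cohomology. The new feature is that $K_M$ and $M$ are non-compact; following the selective symplectic homology formalism of \cite{U}, one defines $SH^*_M(M)$ and $SH^*_M(K_M)$ as (suitably completed) telescopes over cofinal families of Hamiltonians that are $C^2$-small on the chosen support — all of $M$, respectively on $K_M=[1,+\infty)\times\partial\bar M$ — and linear with slope tending to $+\infty$ on the complementary cylindrical region, so that the only $1$-periodic orbits entering are the constants over the support together with the relevant Reeb orbits. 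The restriction maps $SH^*_M(M)\to SH^*_M(\bar M)$, $SH^*_M(M)\to SH^*_M(K_M)$, and those into $SH^*_M(\partial\bar M)$, are induced by the continuation maps comparing these cofinal families. The one analytic input that must be re-examined in the non-compact setting is the integrated maximum principle of \cite{Var21}; since $K_M$ and the end are genuine symplectizations the estimate is if anything easier there, and it confines all relevant Floer trajectories to the regions where $C^0$ control is available (we never need to compute $SH^*_M(K_M)$ itself).

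Granting this, exactness follows as in \cite{Var21}: one constructs an ``acyclic'' pair of Hamiltonian families $H_1, H_2$ adapted to $\bar M$ and to $K_M$, whose pointwise minimum and maximum are again admissible (for $M$ and for $\partial\bar M$ respectively), and for which the four associated Floer complexes assemble, after passing to telescopes, into a short exact sequence of chain complexes; the resulting long exact sequence in cohomology is the one in the statement, and the identifications $SH^*_M(\bar M)\cong SH^*(M)$ and $SH^*_M(\partial\bar M)\cong RFH^*(\partial\bar M)$ are Properties (2) and (3) above (Appendices \ref{sec:comparison-1}, \ref{sec:comparison-2}).

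For $SH^*_M(M)\cong H^*_c(M;\Lambda)$ one runs the standard small-Hamiltonian computation: choosing the cofinal family for the support $M$ to consist of Hamiltonians that are $C^2$-small Morse functions on an exhausting sequence of compact sets and proper (growing to $+\infty$) on the end, each Floer complex is the Morse cochain complex of a proper Morse function, corrected by $J$-holomorphic spheres; the telescope then computes the compactly supported cohomology with Novikov coefficients, and the pair-of-pants product descends to the quantum product on $H^*_c(M;\Lambda)$. (For $M$ closed this is the usual identification of $\mathcal{SH}_M(M)$ with quantum cohomology; the compactly supported cohomology in the open case reflects the properness of the cofinal Hamiltonians, after the degree conventions fixed in Section \ref{sec:def-sh}.)

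Finally, the restriction maps of this theory are built from continuation maps and are compatible with the pair-of-pants (TQFT) operations, hence are unital algebra homomorphisms; combined with the fact that the identifications $SH^*_M(M)\cong(H^*_c(M;\Lambda),\ast_{\mathrm{quantum}})$ and $SH^*_M(\bar M)\cong(SH^*(M),\ast_{\mathrm{pop}})$ are ring isomorphisms, this shows that the composite $H^*_c(M;\Lambda)\to SH^*(M)$ is a map of algebras (the $c^\ast$-map of Cieliebak--Frauenfelder--Oancea, refined by the quantum product). The main obstacle is the first paragraph: carefully extending the Mayer--Vietoris package of \cite{Var21} — the choice of cofinal families, the maximum principle, and the homological-algebra descent — from compact supports to the non-compact, Liouville-invariant supports $M$ and $K_M$; once this formalism is in place, the remaining steps are routine adaptations of known computations.
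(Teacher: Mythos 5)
Your proposal has a genuine gap at the crucial step: the choice of Hamiltonian families used to define $SH^*_M(M)$ and $SH^*_M(K_M)$. You propose Hamiltonians that are ``$C^2$-small Morse functions on an exhausting sequence of compact sets and proper (growing to $+\infty$) on the end.'' This is the wrong sign of slope at infinity, and it does not compute $H^*_c(M;\Lambda)$. With small positive slope you would get $H^*(M;\Lambda)$, and with slopes going to $+\infty$ you would get Viterbo's $SH^*(M)$ --- either way the wrong answer. (As a sanity check: on $\RR^n$, the proper function $|x|^2$ has Morse cohomology concentrated in degree $0$, giving $H^*(\RR^n)$, while $-|x|^2$ gives $H^*_c(\RR^n)$ in degree $n$.) The paper uses Hamiltonians that are everywhere \emph{negative}, converge to zero on all of $M$, and have \emph{small negative} slope at infinity with slopes tending to zero --- this is consistent with the general definition (monotone approximation from below of $\chi^\infty_M\equiv 0$) and is exactly what makes the PSS-type computation land in $H^*_c$ rather than $H^*$. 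Your phrase ``reflects the properness of the cofinal Hamiltonians'' has the relation backwards: properness at $+\infty$ yields ordinary cohomology, not compactly supported cohomology.

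The same confusion propagates to your description of $SH^*_M(K_M)$: since the support $K_M=[1,+\infty)\times\partial\bar M$ already contains the cylindrical end, the acceleration data for $K_M$ must also have small \emph{negative} slope near $r=\infty$ (to converge to $0$ on $K_M$) while diverging to $+\infty$ only in the interior region $\mathrm{int}(\bar M)$, which is \emph{not} cylindrical; your phrase ``linear with slope tending to $+\infty$ on the complementary cylindrical region'' misidentifies where the divergence happens. Once the correct acceleration data are in place, your remaining steps (Mayer--Vietoris via the cone argument of \cite{Var21}, noting all orbits live in a compact set; unitality of restriction maps; product compatibility) are essentially the paper's argument, with one small difference worth noting: to show $H^*_c(M;\Lambda)\to SH^*(M)$ is a ring map, the paper does not directly invoke the TQFT structure of all restriction maps but instead factors through $H^*(M;\Lambda)$ using a second ray of small-\emph{positive}-slope Hamiltonians (Proposition~\ref{p:factor}), which sidesteps subtleties with products on telescopes for non-compact supports.
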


The idea of the proof of the following statement is the same as that of \cite[Theorem 13.3]{Ritter}.

\begin{corollary}\label{c:boundary vanishing}
Assume that $H^*_c(M;\Lambda)$, equipped with the quantum product, is a nilpotent algebra, then $SH_M^*(\bar{M})=0$ if and only if $SH^*_M(\partial \bar{M})=0$.
\end{corollary}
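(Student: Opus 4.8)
The plan is to read both implications off the Mayer--Vietoris long exact sequence of Theorem~\ref{thm-rfh-sh-vanish} for the decomposition $M=\bar M\cup K_M$, using the algebra structure it records. Write $A:=SH^*_M(M)$, which by that theorem is $H^*_c(M;\Lambda)$ with its quantum product, and recall that $SH^*_M(\bar M)\cong SH^*(M)$ is a \emph{unital} ring (cf.\ property~(2) in the introduction). The sequence has the shape
\[
\cdots\to A^k\xrightarrow{(c,\,r)}SH^k_M(\bar M)\oplus SH^k_M(K_M)\to SH^k_M(\partial\bar M)\to A^{k+1}\to\cdots,
\]
where the first component $c\colon A\to SH^*_M(\bar M)$ is the algebra homomorphism of Theorem~\ref{thm-rfh-sh-vanish} and $r\colon A\to SH^*_M(K_M)$ is the restriction.

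For the implication ``$SH^*_M(\partial\bar M)=0\ \Rightarrow\ SH^*_M(\bar M)=0$'' I would argue as in \cite[Theorem~13.3]{Ritter}. Exactness at the middle term forces $A\to SH^*_M(\bar M)\oplus SH^*_M(K_M)$ to be surjective, so composing with the projection onto the first summand shows that the algebra map $c$ is surjective. Now invoke nilpotency: if $A^N=0$, then $SH^*_M(\bar M)=c(A)$ satisfies $SH^*_M(\bar M)^N=c(A^N)=0$, so $SH^*_M(\bar M)$ is a nilpotent ring; being also unital it must vanish, since in a unital ring the unit lies in every power of the ring. Hence $SH^*_M(\bar M)=0$.

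For the converse ``$SH^*_M(\bar M)=0\ \Rightarrow\ SH^*_M(\partial\bar M)=0$'', the sequence degenerates to
\[
\cdots\to A^k\xrightarrow{\ r\ }SH^k_M(K_M)\to SH^k_M(\partial\bar M)\to A^{k+1}\xrightarrow{\ r\ }SH^{k+1}_M(K_M)\to\cdots,
\]
so it suffices to prove that the restriction $r\colon A\to SH^*_M(K_M)$ is an isomorphism; granted this, a diagram chase as above gives $SH^*_M(\partial\bar M)=0$. I would prove that $r$ is an isomorphism by identifying $SH^*_M(K_M)$ with the symplectic cohomology of $M$ supported on a neighbourhood of infinity, presenting it as the limit over the conical levels $\{r\ge R\}$ as $R\to\infty$, and checking that the vanishing of $SH^*_M(\bar M)$ kills any Floer-theoretic contribution coming from the skeleton of $\bar M$, so that this limit is computed by $H^*_c(M;\Lambda)$ compatibly with $r$. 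No nilpotency is needed here: this direction is the local-to-global incarnation of the classical fact that $SH^*(M)=0$ forces the vanishing of Rabinowitz Floer homology.

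The heart of the argument, and essentially its only nontrivial point, is this identification of $SH^*_M(K_M)$ and the verification that $r$ becomes an isomorphism once $SH^*_M(\bar M)=0$; everything else reduces to the soft algebra that a surjective image of a nilpotent ring inside a unital ring is the zero ring, together with routine bookkeeping in the long exact sequence.
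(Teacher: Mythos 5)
Your treatment of the direction $SH^*_M(\partial\bar{M})=0\Rightarrow SH^*_M(\bar{M})=0$ is correct and matches the paper's argument exactly: surjectivity of $H^*_c(M;\Lambda)\to SH^*_M(\bar{M})$ from the Mayer--Vietoris sequence, then the soft observation that a unital quotient of a nilpotent algebra is zero.

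The other direction, however, has a genuine gap, and it is also the place where you are working far too hard. You propose to show that after $SH^*_M(\bar{M})=0$ the restriction $r\colon SH^*_M(M)\to SH^*_M(K_M)$ becomes an isomorphism, and then to chase the degenerated long exact sequence. But you offer no argument for why $r$ would be an isomorphism; the sketch of ``identifying $SH^*_M(K_M)$ with a limit over conical levels and checking that vanishing of $SH^*_M(\bar M)$ kills the skeleton contribution'' is precisely the nontrivial content you would need to supply, and it is not at all obvious. In fact this claim is essentially equivalent to what you are trying to prove: once $SH^*_M(\bar M)=0$, exactness of the triangle makes ``$r$ is an isomorphism'' \emph{equivalent} to ``$SH^*_M(\partial\bar M)=0$'', so without an independent argument for one of them the reasoning is circular. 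The paper instead disposes of this implication in one line and without the Mayer--Vietoris sequence at all: since $\partial\bar{M}\subset\bar{M}$, there is a restriction map $SH^*_M(\bar{M})\to SH^*_M(\partial\bar{M})$, and restriction maps are unital (\cite{TVar}); hence $SH^*_M(\bar{M})=0$ forces the unit of $SH^*_M(\partial\bar{M})$ to be zero, i.e.\ $SH^*_M(\partial\bar{M})=0$. You should replace your argument for this direction with that one-line unitality argument; it also has the virtue of making explicit that no nilpotency hypothesis is needed there.
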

\begin{proof}
    By the unitality of restriction maps \cite{TVar}\footnote{In general, $SH^\ast_M(K)$ has a unit if $K$ is compact (the construction of the unit element and the proof that the restriction maps preserve it follows the strategy of \cite{TVar}). In the cases that are relevant to us in this paper, namely when $K=\bar{M}$ and $K=\partial \bar{M}$, the symplectic cohomology with support $SH_M^\ast(K)$ coincides with the Viterbo symplectic cohomology $SH^\ast(M)$ and Rabinowitz Floer cohomology $RFH^\ast(\partial \bar{M})$, respectively (cf. Proposition~\ref{p:rel2vit} and Corollary \ref{c:RFH}). The unitality statements are well-known in this special case (see the proof of \cite[Theorem 13.3]{Ritter}). As the case $K=M$ shows, when $K$ is not compact, $SH^\ast_M(K)$ may not be unital.}, vanishing of $SH^*_M(\bar{M})$ implies the vanishing of $SH^*_M(\partial\bar{M})$. Conversely, if $SH^*_M(\partial\bar{M})=0$, then, by Theorem \ref{thm-rfh-sh-vanish}, $SH^*_M(M) \to SH^*_M(\bar{M})$ is surjective. This means, by the second part of Theorem \ref{thm-rfh-sh-vanish}, that the unit of $SH^*_M(\bar{M})$ is nilpotent and therefore $SH_M^*(\bar{M})=0$.
\end{proof}

If either $c_1(TM)$ or $\omega$ is zero on $\pi_2(M)$, then $H^*_c(M;\Lambda)$ is nilpotent. Combined with the comparison results in Appendix \ref{sec:comparison}, we get an alternative proof of the fact that for Liouville manifolds the vanishing of Rabinowitz Floer cohomology is equivalent to the vanishing of symplectic cohomology \cite{Ritter}, over field coefficients. Our argument works over the integers as well.

\subsection{Symplectic and contact big fiber theorems}
Now we give applications of our invariants to symplectic and contact topology. The famous \emph{symplectic big fiber theorem} by Entov-Polterovich \cite{EP06,EP09,DGPZ} says that any involutive map on a closed symplectic manifold has a non-displaceable fiber. We prove a symplectic big fiber theorem for Liouville manifolds as the most basic application of our methods.

\begin{corollary}\label{c:Liouville big fiber}
    Let $(M,\lambda)$ be a finite type complete Liouville manifold with $SH^*(M)\neq 0$. Any proper involutive map $F: M\to \RR^N$ has a fiber which is not Hamiltonian displaceable.
\end{corollary}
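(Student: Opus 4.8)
The plan is to reduce Corollary \ref{c:Liouville big fiber} to Theorem \ref{t:main rel}, by covering a Liouville subdomain of $M$ that carries the nonvanishing symplectic cohomology with finitely many preimages of small balls under $F$; such preimages automatically form a Poisson commuting collection, because their defining functions are functions of the components of $F$.

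Concretely, I would first fix, using that $M$ has finite type, a Liouville domain $\bar M_0\subset M$ whose symplectic completion is $M$, so that by property (2) in the list of basic properties above $SH^*_M(\bar M_0)\cong SH^*(M)\ne 0$. Arguing by contradiction, assume every fiber of $F$ is Hamiltonian displaceable. Properness of $F$ makes every fiber compact, and displaceability of a compact set is stable in the following sense: if $\psi\in\Ham(M)$ displaces $F^{-1}(p)$, then the compacta $F^{-1}(\overline{B(p,\epsilon)})$ and their $\psi$-images form nested families whose intersections over $\epsilon>0$ are the disjoint compacta $F^{-1}(p)$ and $\psi(F^{-1}(p))$, so for some $\epsilon_p>0$ the set $F^{-1}(\overline{B(p,\epsilon_p)})$ is disjoint from its $\psi$-image, i.e.\ is displaceable. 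Since $Q:=F(\bar M_0)$ is compact, I can extract from the cover $\{B(p,\epsilon_p)\}_{p\in Q}$ a finite subcover $B(p_1,\epsilon_1),\dots,B(p_N,\epsilon_N)$, and set $K_m:=F^{-1}(\overline{B(p_m,\epsilon_m)})$, which is compact by properness and Hamiltonian displaceable by construction.

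Next I would check that $K_1,\dots,K_N$ is a Poisson commuting collection in the sense of Definition \ref{d:weakly1}: with $U_{m,i}:=M$, a sequence $r_{m,i}\searrow\epsilon_m$, and $f_{m,i}:=\abs{F-p_m}^2-r_{m,i}^2$, one has $\bigcap_i\{f_{m,i}<0\}=K_m$, each $\{f_{m,i}\le\epsilon\}=F^{-1}(\overline{B(p_m,\sqrt{r_{m,i}^2+\epsilon})})$ is compact by properness, $f_{m,i}-f_{m,i+1}=r_{m,i+1}^2-r_{m,i}^2<0$, and since every $f_{m,i}$ is a function of the pairwise Poisson commuting components $F_1,\dots,F_N$ of $F$, all the brackets $\{f_{m,i},f_{m',i}\}$ vanish identically. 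Now $K:=K_1\cup\dots\cup K_N$ contains $F^{-1}(Q)\supseteq\bar M_0$, so the restriction map $SH^*_M(K)\to SH^*_M(\bar M_0)$ — which is unital by \cite{TVar} — sends the unit to the nonzero unit of $SH^*_M(\bar M_0)\cong SH^*(M)$, forcing $SH^*_M(K)\ne 0$. Theorem \ref{t:main rel} then yields some $K_m$ that is not Hamiltonian displaceable, contradicting the construction. Hence the assumption was false and some fiber of $F$ is not Hamiltonian displaceable.

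I do not anticipate a substantial obstacle: the real content — the Mayer--Vietoris principle and vanishing under displacement — is already built into Theorem \ref{t:main rel}. The only steps needing a little care are the stability of displaceability (used to thicken a displaceable fiber to a displaceable preimage of a ball) and bootstrapping $SH^*_M(K)\ne 0$ from $SH^*(M)\ne 0$ through the unital restriction map, and both are routine.
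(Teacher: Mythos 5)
Your proof is correct and follows essentially the same route as the paper's: cover a Liouville domain $\bar M_0$ whose completion gives $SH^*(M)$ by finitely many displaceable preimages of small balls, note that these form a Poisson commuting collection, and combine Theorem~\ref{t:main rel} with unitality of the restriction map to $SH^*_M(\bar M_0)$. The only difference is that you spell out the routine points the paper leaves implicit (the $\varepsilon$--thickening of a displaceable fiber, and the explicit choice of functions $f_{m,i}=\abs{F-p_m}^2-r_{m,i}^2$ verifying Definition~\ref{d:weakly1}), which is fine.
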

\begin{proof}
    Recall that $(M,\lambda)$ is the symplectic completion of some Liouville domain $\bar{M}$. Suppose for any $b\in F(\bar{M})$ the compact set $F^{-1}(b)$ is displaceable. Then any $b\in F(\bar{M})$ has a neighborhood $U_b$ whose closure $\overline{U}_b$ is compact such that $F^{-1}(\overline{U}_b)$ is displaceble in $M$. The cover $\{U_b\}$ admits a finite subcover $\{U_i\}$ since $F(\bar{M})$ is compact. By pulling back, the finite collection $\{\overline{U}_i\}$ gives a Poisson commuting cover $\{K_i=F^{-1}(\overline{U}_i)\}$ of $\bar{M}$. Hence Theorem \ref{t:main rel} shows that $SH^*_M(\cup K_i)=0$. By the unitality of the restriction map $SH^*_M(\cup K_i)\to SH^*_M(\bar{M})$, we get $SH^*_M(\bar{M})=0$, which is isomorphic to $SH^*(M)$, a contradiction.
\end{proof}

The non-vanishing condition for symplectic cohomology is necessary. The map $\abs{z}^2: \CC\to \RR$ has no rigid fiber. Further rigidity of intersections in Liouville manifolds will be described in Section \ref{sec:quadrics}. Particularly, we recover a result of Abouzaid-Diogo \cite{AD} on the cotangent bundle of a $3$-sphere.\newline

Next we introduce a contact big fiber theorem. Let us recast the definition of a contact involutive map in more symplectic terms. The equivalence is elementary and well-known.

\begin{definition}\label{d:contact Poisson}
    Let $(C,\xi)$ be a closed contact manifold and let $\pi: S(C)\to C$ be the projection from its symplectization. A smooth map $G=(g_1,\cdots,g_N):C\to\RR^N$ is called contact involutive if the function
    $$
    ( g_1\circ\pi,\cdots, g_N\circ\pi, r_\alpha): S(C)\to \RR^{N+1}
    $$
    is an involutive map on $S(C)$, for some contact form $\alpha$, where $r_\alpha$ denotes the Liouville coordinate.
\end{definition}

\begin{definition}
    Let $(C,\xi)$ be a contact manifold with a contact form $\alpha.$ Let $\phi: C\to C$ be a contactomorphism. We say that $p\in C$ has conformal factor $1$ with respect to $\phi$ and $\alpha$ if $(\phi^*\alpha)_p=\alpha_p.$
\end{definition}

The next theorem uses the notion of \emph{strong symplectic filling}. Recall that a compact symplectic manifold $\bar{M}$ is a strong symplectic filling of a contact manifold $(C, \xi)$ if $\bar{M}$ admits a Liouville form $\lambda$ defined in a collar neighbourhood of the boundary $\partial \bar{M}$ such that
\begin{enumerate}
\item the restriction $\left.\lambda\right|_{\partial \bar{M}}$ is a contact form that induces the boundary orientation on $\partial\bar{M}$
\item the contact manifolds $(\partial\bar{M}, \ker \left.\lambda\right|_{\partial \bar{M}})$ and $(C,\xi)$ are contactomorphic.
\end{enumerate}

\begin{theorem}\label{t:conformal}
    Let $(C,\xi)$ be a closed contact manifold with a contact form $\alpha$. If $(C,\alpha)$ admits a strong symplectic filling $\bar{M}$ (whose completion is denoted by $M$) with $SH^*_{M}(\{r_i\}\times\partial \bar{M})\neq 0$ for some $r_i\to\infty$, then any contact involutive map witnessed by $\alpha$ on $C$ has a fiber which is not contact displaceable in $C$. In fact, there exists a fiber $K$ such that for any contactomorphism $\phi: C\to C$ isotopic to the identity, we can find $p\in K$ with conformal factor $1$ with respect to $\phi$ and $\alpha$ so that $\phi(p)\in K$.
\end{theorem}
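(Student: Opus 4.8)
The plan is to lift the whole situation to the symplectization and run an Entov--Polterovich-style covering argument through Theorem~\ref{t:main rel}. Write $G=(g_1,\dots,g_N)\co C\to\RR^N$ for the given contact involutive map witnessed by $\alpha$, and let $\mathrm{pr}\co S(C)\to C$ be the projection. By Definition~\ref{d:contact Poisson} the functions $g_1\circ\mathrm{pr},\dots,g_N\circ\mathrm{pr},r_\alpha$ pairwise Poisson commute on $S(C)$. Since $\bar M$ is a strong filling of $(C,\alpha)$, a neighborhood of $\partial\bar M$ together with the cylindrical end of $M$ is symplectomorphic to a piece $([1,\infty)\times C,\,d(r\alpha))$ of $S(C)$, so the functions $g_i\circ\mathrm{pr}$ and $r=r_\alpha$ are defined there and keep their Poisson commutativity. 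Fix a level $r_0$ among the $r_i$ of the hypothesis, so that $SH^*_M(\Sigma)\neq 0$ for the contact-type hypersurface $\Sigma:=\{r_0\}\times C\subset M$, which we identify with $C$ equipped with the contact form $r_0\alpha$ (so ``conformal factor $1$'' means the same for $\alpha$ and for $r_0\alpha$). Finally recall that a contact isotopy of $C$ lifts to a homogeneous Hamiltonian isotopy of $S(C)$; restricting such a lift to a compact piece of the cylindrical end containing the swept-out region and cutting the generating Hamiltonian off in the $r$-variable produces a compactly supported Hamiltonian isotopy of $M$. Hence, if $A\subset C$ is contact displaceable then $\{r_0\}\times A$ is Hamiltonian displaceable in $M$, provided $r_0$ is large enough that the relevant lift stays in the cylindrical end.

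First I would produce a non-displaceable fiber following the proof of Corollary~\ref{c:Liouville big fiber}. Suppose, for contradiction, that $\{r_0\}\times G^{-1}(b)$ is Hamiltonian displaceable in $M$ for every $b\in\RR^N$. Each $b$ then has a compact neighborhood $V_b\subset\RR^N$ with $\{r_0\}\times G^{-1}(V_b)$ still displaceable (displaceability of compact sets is an open condition), and compactness of $G(C)$ yields a finite subcover $V_{b_1},\dots,V_{b_\ell}$. Put $K_j:=\{r_0\}\times G^{-1}(V_{b_j})\subset\Sigma$, so that $K_1\cup\dots\cup K_\ell=\Sigma$. The collection $\{K_j\}$ is Poisson commuting in the sense of Definition~\ref{d:weakly1}: writing each $V_{b_j}$ as a decreasing intersection of sublevel sets $\{h_{j,i}<0\}$ of smooth functions on $\RR^N$ and combining $h_{j,i}\circ G$ with functions of $r$ that cut down to the level $r_0$, one obtains functions $f_{j,i}$ near $K_j$; since each $f_{j,i}$ is a function of $(r_\alpha,\,g_1\circ\mathrm{pr},\dots,g_N\circ\mathrm{pr})$ and these pairwise Poisson commute, condition (4) holds, while (1)--(3) are arranged by routine choices. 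Theorem~\ref{t:main rel} then forces $SH^*_M(\Sigma)=SH^*_M(K_1\cup\dots\cup K_\ell)=0$, a contradiction. Hence some fiber $K:=G^{-1}(b_*)$ has $\{r_0\}\times K$ Hamiltonian non-displaceable in $M$, and by the reduction above $K$ is contact non-displaceable in $C$.

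For the refined statement, let $\phi\co C\to C$ be a contactomorphism isotopic to the identity, with $\phi^*\alpha=e^{f}\alpha$. Its lift $\hat\phi(x,r)=(\phi(x),e^{-f(x)}r)$ is an exact symplectomorphism of $S(C)$, and a direct computation gives
$$
\hat\phi\big(\{r_0\}\times K\big)\cap\big(\{r_0\}\times K\big)=\{r_0\}\times\big\{\,x\in K:\ \phi(x)\in K,\ f(x)=0\,\big\}.
$$
So the point $p$ sought in the theorem (conformal factor $1$ with respect to $\phi$ and $\alpha$, with $\phi(p)\in K$) exists for this $\phi$ if and only if $\hat\phi$ fails to displace $\{r_0\}\times K$ from itself. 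If $\hat\phi$ did displace it, then choosing a contact isotopy $\phi_t$ from $\mathrm{id}$ to $\phi$, lifting it to a Hamiltonian isotopy of $S(C)$, and cutting off in $r$ as above, we would obtain a compactly supported Hamiltonian diffeomorphism of $M$ displacing $\{r_0\}\times K$, contradicting the non-displaceability just established. This proves the refined statement for the given $\phi$.

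The main obstacle is the interplay between the level $r_0$ and the conformal factor of $\phi$: the cut-off in the last step only works once $r_0$ exceeds $\sup_{[0,1]\times K}\rho_t$, where $\rho_t$ is the conformal factor of $\phi_t$ (so $\rho_1=e^f$), so a priori the fiber $K$ produced above depends on $r_0$, hence on $\phi$. To extract a single fiber $K$ valid for \emph{all} $\phi$ one would run the covering argument simultaneously along the whole sequence $r_i\to\infty$: fixing a sequence of finite covers of $G(C)$ of mesh tending to zero, each cover involves only finitely many ``thickened fibers'' while infinitely many levels $r_i$ are available, so a pigeonhole argument produces, for each cover, a thickened fiber that is Hamiltonian non-displaceable at arbitrarily large levels; a compactness argument on $G(C)$ should then yield a parameter $b_*$ whose fiber $K=G^{-1}(b_*)$ is non-displaceable at arbitrarily large levels, which suffices since any given $\phi$ only imposes a finite lower bound on $r_0$. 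Making this last compactness/limiting step precise --- in particular controlling how non-displaceability behaves as the thickened fibers shrink and the level grows --- is where the real work lies; everything else is the Entov--Polterovich scheme transported through Theorem~\ref{t:main rel}.
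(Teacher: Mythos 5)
Your argument for the first part --- passing to the symplectization, thickening fibers into a Poisson commuting finite cover of $\{r_0\}\times\partial\bar M$, and invoking Theorem~\ref{t:main rel} against the non-vanishing hypothesis --- is essentially the paper's proof and is correct.

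The obstacle you flag at the end, however, is a genuine gap, and the pigeonhole/compactness repair you sketch does not close it: Hamiltonian non-displaceability need not persist as you vary both the base point $b$ and the level $r_i$, and for a strong (rather than Liouville) filling you cannot simply conjugate by a globally defined Liouville flow to slide a non-displaceable set up the cylindrical end. The correct fix is to reverse the order of quantifiers, which is what the paper does. Assume for contradiction that the refined conclusion fails for \emph{every} fiber: for each $b$ there is some $\phi_b$ isotopic to the identity such that no $p\in G^{-1}(b)$ with $\phi_b(p)\in G^{-1}(b)$ has conformal factor $1$. By the contrapositive of Lemma~\ref{l:conformal}, the homogeneous lift $\hat\phi_b$ displaces $\{r\}\times G^{-1}(b)$ from itself in $S(C)$ for every $r>0$. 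Thicken each fiber and extract a finite subcover $V_{b_1},\dots,V_{b_\ell}$ of $G(C)$, so that only the finitely many contactomorphisms $\phi_{b_1},\dots,\phi_{b_\ell}$ remain in play. \emph{Only now} choose $r_0=r_i$ from the given sequence, large enough that all $\ell$ lifted isotopies stay inside the cylindrical end when applied to $\{r_0\}\times\partial\bar M$ and hence can be cut off to compactly supported Hamiltonian isotopies of $M$ displacing the sets $\{r_0\}\times G^{-1}(V_{b_j})$. These sets form a Poisson commuting cover of $\{r_0\}\times\partial\bar M$, so Theorem~\ref{t:main rel} and unitality force $SH^*_M(\{r_0\}\times\partial\bar M)=0$, contradicting the hypothesis. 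Because the cover and the displacing contactomorphisms are fixed before $r_0$ is chosen, there is no residual uniformity problem: the contradiction produces some $b_*$ for which no such $\phi_{b_*}$ exists, and $K=G^{-1}(b_*)$ satisfies the refined conclusion as stated.
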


\begin{corollary}\label{c:contact big fiber}
    Let $(C,\xi)$ be a closed contact manifold, which is the ideal contact boundary of a finite type complete Liouville manifold $(M,\lambda).$ Assume that $SH^*(M)\neq 0$. Then, any contact involutive map $F: C\to \RR^N$ has a fiber which is not contact displaceable.
\end{corollary}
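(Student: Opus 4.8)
The plan is to reduce the statement to Theorem \ref{t:conformal}. By Definition \ref{d:contact Poisson} the contact involutive map $F$ is witnessed by some contact form $\alpha$ for $\xi$, so it suffices to exhibit a strong filling of $(C,\alpha)$ whose completion is $M$ and which satisfies the non-vanishing hypothesis $SH^*_M(\{r_i\}\times\partial\bar M)\neq 0$ of that theorem. First I would realize $(C,\alpha)$ as the contact boundary of a Liouville subdomain of $M$: writing the cylindrical end of $M$ as $([1,+\infty)\times C,\, d(r\alpha_0))$ for a contact form $\alpha_0$ of $\xi$, we have $\alpha=f\alpha_0$ for a positive function $f$ on $C$, and after rescaling $\alpha$ by a large constant (which does not affect the property of witnessing $F$) we may assume $\min_C f>1$. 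The graphical hypersurface $\{r=f(x)\}$ then lies in the end and bounds a Liouville domain $\bar M_\alpha\subset M$---in particular a strong filling of $(C,\alpha)$---whose induced contact form is exactly $\alpha$; since $M$ is of finite type and complete, its Liouville flow identifies the completion of $\bar M_\alpha$ with $M$. For each $\rho\ge 1$ I set $\bar M_\alpha^{\rho}:=\bar M_\alpha\cup\big([1,\rho]\times\partial\bar M_\alpha\big)$, a compact Liouville subdomain of $M$ with $\partial\bar M_\alpha^{\rho}=\{\rho\}\times\partial\bar M_\alpha$ and completion again $M$.

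Next I would verify the non-vanishing hypothesis for every $\rho\ge 1$. By property (2) of symplectic cohomology with support, $SH^*_M(\bar M_\alpha^{\rho})$ is the Viterbo symplectic cohomology of its completion $M$, hence equals $SH^*(M)\neq 0$. Since $M$ is exact, $\omega$ vanishes on $\pi_2(M)$, so $H^*_c(M;\Lambda)$ equipped with the quantum product is nilpotent, and Corollary \ref{c:boundary vanishing}, applied to the decomposition $M=\bar M_\alpha^{\rho}\cup_{\partial}\big([1,+\infty)\times\partial\bar M_\alpha^{\rho}\big)$, gives $SH^*_M(\{\rho\}\times\partial\bar M_\alpha)=SH^*_M(\partial\bar M_\alpha^{\rho})\neq 0$. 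In particular this holds along any chosen sequence $\rho=r_i\to\infty$, so Theorem \ref{t:conformal}, applied to the filling $\bar M_\alpha$ of $(C,\alpha)$, produces a fiber of $F$ that is not contact displaceable in $C$, which is the assertion.

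This is mostly an assembly of results already established, so I do not anticipate a genuine obstacle; the points deserving care are the identification of the completions of the domains $\bar M_\alpha^{\rho}$ with $M$ (so that property (2) and Corollary \ref{c:boundary vanishing} can legitimately be invoked with ambient manifold $M$), and, relatedly, the passage from the hypothesis $SH^*(M)\neq 0$---a statement about Viterbo symplectic cohomology---to the non-vanishing of the support invariant on the round hypersurfaces $\{r_i\}\times\partial\bar M_\alpha$ at infinity demanded by Theorem \ref{t:conformal}. Over a field one could instead bypass Corollary \ref{c:boundary vanishing} and argue through property (3), using the equivalence, valid for Liouville manifolds, between the vanishing of Rabinowitz Floer homology and of symplectic cohomology.
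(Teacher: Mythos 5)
Your proof is correct and takes essentially the same route as the paper: realize $(C,\alpha)$ as the boundary of a Liouville subdomain of $M$ with completion isomorphic to $M$, identify $SH^*_M$ of that subdomain with the (nonzero) Viterbo $SH^*(M)$, invoke Corollary~\ref{c:boundary vanishing} (using exactness of $M$ for nilpotency) to pass to the boundary invariant, and conclude via Theorem~\ref{t:conformal}. You have merely spelled out in more detail the step the paper compresses into ``the whole symplectization of $C$ can be embedded in $M$,'' namely the need to take a graphical hypersurface for the specific witnessing contact form $\alpha$ rather than the end's defining form.
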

\begin{proof} 
Since $M$ is Liouville, the whole symplectization of $C$ can be embedded in $M$. Set $\bar{M}_i:=M\setminus((i,+\infty)\times C)$. The symplectic completion of every $\bar{M}_i$ is isomorphic to $M$. Therefore $SH^*(M)\neq 0$ implies that $SH^*_M(\bar{M}_i)\neq 0$. By Corollary \ref{c:boundary vanishing} we know $SH^*_M(\partial\bar{M}_i)\neq 0$, then we apply the above theorem.
\end{proof}

\begin{remark}
    The condition on the involutive map in Corollary~\ref{c:contact big fiber} cannot be relaxed by dropping the Liouville coordinate $r_\alpha$. Namely, there are maps $G=(g_1, \ldots, g_N): C\to \RR^N$ defined on the boundary of a Liouville domain with non-vanishing symplectic cohomology such that 
\[(g_1\circ\pi, \ldots, g_N\circ\pi): S(C)\to\RR^N\]
is an involutive map on $S(C)$ and such that all the fibers of $G$ are contact displaceable. One such example is a non-constant function $G:S^1\to\RR$ on the circle. Every fiber of $G$ is contact displaceable despite $S^1$ being the boundary of a Liouville domain (e.g. a compact genus-1 surface with 1 boundary component) with non-vanishing symplectic cohomology.
\end{remark}

A direct consequence of Corollary \ref{c:contact big fiber} is that the standard contact sphere $(S^{2n+1},\xi_{st})$ cannot bound any Liouville domain with non-zero symplectic cohomology, since it admits a contact involutive map with every fiber being contact displaceable \cite[Theorem 1.2]{MP}. This gives an alternative proof of  \cite[Corollary 6.5]{Se}. 

\subsection{Big fiber theorem for prequantization circle bundles}

An interesting family of examples comes from prequantization bundles. Let $(D,\omega_D)$ be a closed symplectic manifold with an integral lift $\sigma$ of its symplectic class $[\omega_D]$. Then there is a principle circle bundle $p: C\to D$ with Euler class $-\sigma$. One can choose a connection one-form $\alpha$ on $C$ such that $d\alpha=p^*\omega_D$, which makes $(C,\xi:=\ker\alpha)$ into a contact manifold, called a {prequantization bundle}. An important point for us is that involutive maps on $D$ pull back to contact involutive maps on $C$. If a compact subset $K$ of $D$ is displaceable in $D$ then $p^{-1}(K)$ is contact displaceable in $C$.

\begin{corollary}\label{c:prequan}
    Let $p:C\to D$ be a prequantization bundle over a closed symplectic manifold $D$. If $C$ admits a Liouville filling with non-zero symplectic cohomology, then for any involutive map $F:D\to \RR^N$ there is a contact nondisplaceable fiber of $F\circ p$.
\end{corollary}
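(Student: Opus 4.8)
The plan is to reduce the statement to Corollary~\ref{c:contact big fiber}. The hypothesis already provides what that corollary needs on the symplectic side: if $\bar M$ is the assumed Liouville filling and $M$ its completion, then $C$ is the ideal contact boundary of the finite type complete Liouville manifold $M$ with $SH^{*}(M)\neq0$. So the only point requiring work is to verify that $F\circ p$ is a contact involutive map on $C$ in the sense of Definition~\ref{d:contact Poisson} --- this is the assertion flagged in the introduction that ``involutive maps on $D$ pull back to contact involutive maps on $C$'' --- and I would carry this out using the connection one-form $\alpha$ as the witnessing contact form.

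Concretely, write $F=(f_1,\dots,f_N)$, let $S(C)=\RR_{r}\times C$ carry the symplectic form $d(r\alpha)=dr\wedge\alpha+r\,p^{*}\omega_D$, let $\pi\colon S(C)\to C$ be the projection, and set $\widetilde f_i:=f_i\circ p\circ\pi$. Since $\alpha$ is a connection one-form with $d\alpha=p^{*}\omega_D$, its Reeb vector field is the infinitesimal generator of the fibrewise circle action, so each $f_i\circ p$ is invariant under the Reeb flow, and hence so is $\widetilde f_i$. A short computation of Hamiltonian vector fields on $S(C)$ then shows that $X_{r}$ is the canonical lift of the Reeb field and that $X_{\widetilde f_i}$ is the horizontal lift of $r^{-1}X^{D}_{f_i}$; consequently $\{\widetilde f_i,r\}=0$ for all $i$ and $\{\widetilde f_i,\widetilde f_j\}=r^{-1}\bigl(\{f_i,f_j\}_D\circ p\circ\pi\bigr)$, which vanishes because $F$ is involutive on $D$. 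Therefore $(\widetilde f_1,\dots,\widetilde f_N,r)\colon S(C)\to\RR^{N+1}$ is an involutive map, i.e.\ $F\circ p$ is contact involutive, witnessed by $\alpha$.

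Granting this, Corollary~\ref{c:contact big fiber} applies to $F\circ p$ and produces a fiber that is not contact displaceable in $C$; since $(F\circ p)^{-1}(b)=p^{-1}\bigl(F^{-1}(b)\bigr)$, this is exactly a contact non-displaceable fiber of $F\circ p$. I do not expect a genuine obstacle here: the Hamiltonian-vector-field computation is routine linear algebra on the symplectization, and the one mild subtlety --- that the witnessing form $\alpha$ (the connection form) is generally not the contact form $\lambda|_{\partial\bar M}$ inherited from the Liouville filling --- is already absorbed into Theorem~\ref{t:conformal} and Corollary~\ref{c:contact big fiber}, which are stated for arbitrary contact involutive maps (one realizes $(C,\alpha)$, up to an overall constant, as a graph hypersurface in the cylindrical end of $M$, cuts along it, and notes the completion is still $M$). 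If one prefers not to invoke Corollary~\ref{c:contact big fiber} as a black box, the same conclusion follows by mirroring its proof: were every fiber of $F$ displaceable in $D$, compactness of $D$ would give a finite Poisson commuting cover of $D$ by closures of displaceable sets (using that $F$ is involutive), whose $p$-preimages form a Poisson commuting cover of $C$ by contact displaceable sets; realizing $C$ as a convex hypersurface in $M$ with contact form $\alpha$, these become a Poisson commuting cover of that hypersurface by Hamiltonian displaceable subsets of $M$, so Theorem~\ref{t:main rel} forces the corresponding $SH^{*}_{M}$ to vanish, contradicting $SH^{*}(M)\neq0$ via Corollary~\ref{c:boundary vanishing} (valid since $M$ is Liouville, hence $H^{*}_{c}(M;\Lambda)$ is nilpotent).
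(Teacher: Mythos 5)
Your proof is correct and follows the same route the paper has in mind: the paper states (without proof, just before Corollary~\ref{c:prequan}) that involutive maps on $D$ pull back to contact involutive maps on $C$, and then Corollary~\ref{c:contact big fiber} is applied; your Hamiltonian vector field computation on $S(C)$ is exactly the verification of that unproved claim, and the resolution of the contact-form mismatch via graph hypersurfaces in the symplectization is the right way to reconcile the Boothby--Wang form $\alpha$ with the Liouville filling's boundary form. Nothing is missing.
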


Lots of examples satisfying Corollary \ref{c:prequan} come from the complement of a Donaldson hypersurface in a closed symplectic manifold. On the other hand, many prequantization bundles do not admit exact fillings. For example, $\RR P^{2n-1}$ with the standard contact structure is a prequantization bundle over $\CC P^{n-1}$ with no exact fillings when $n\geq 3$, see \cite[Theorem B]{Zhou}. However, it is well-known that sometimes one can do Floer theory on the symplectization of a contact manifold without a filling. Using this technique we prove the following theorem.

\begin{theorem}\label{thm-prequantum}
    Let $D$ be a closed positively monotone symplectic manifold with minimal Chern number at least $2$ and let $C$ be a prequantization bundle over $D$ with Euler class $-\sigma\in H^2(D;\ZZ)$, where $\sigma$ is an integral lift of $[\omega_D]$. If the quantum multiplication with $\sigma$ 
    $$
    I_{\sigma}:H^*(D; \Bbbk)[T, T^{-1}]\to H^*(D; \Bbbk)[T, T^{-1}], \quad A\mapsto A* \sigma
    $$
    is not a bijection, then for any involutive map on $D$ the pullback contact involutive map on $C$ has a contact non-displaceable fiber. 
\end{theorem}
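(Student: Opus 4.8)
The plan is to exhibit $C$ as the convex contact boundary of an explicit, non-exact, strong symplectic filling, to verify the hypothesis of Theorem~\ref{t:conformal} for that filling by means of the Floer Gysin sequence, and finally to invoke the elementary fact that the pullback of an involutive map on $D$ is contact involutive witnessed by $\alpha$. For the geometric set-up: the Hermitian line bundle $L\to D$ with $c_1(L)=-\sigma$ is negative (its Chern class is minus an integral lift of the symplectic class), so its total space $M:=\mathrm{Tot}(L)$ carries a symplectic form for which the zero section $D$ is symplectic and which, outside a neighbourhood of $D$, agrees with the positive symplectization $([1,\infty)\times C,\,d(r\alpha))$. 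Thus $M$ is convex at infinity with ideal contact boundary $(C,\alpha)$, and for each $r\geq 1$ the closed radius-$r$ disk bundle $\bar M_r\subset M$ is a strong symplectic filling of $(C,\alpha)$ whose completion is $M$ and whose boundary $\{r\}\times\partial\bar M$ is a copy of $C$. Hence Theorem~\ref{t:conformal} will apply to $C$ once we know that $SH^*_M(\{r_i\}\times\partial\bar M)\neq 0$ for some sequence $r_i\to\infty$.

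The core step is the Floer Gysin computation. Under the standing technical hypotheses — positive monotonicity of $D$ together with minimal Chern number at least $2$, which are precisely the conditions required for the Floer Gysin sequence of \cite{AK,BKK} — one obtains, for each radius $r$ and with an appropriate grading convention, a long exact sequence of $\Bbbk[T,T^{-1}]$-modules (the monotone Novikov ring suffices here)
\[
\cdots\to H^{*}(D;\Bbbk)[T,T^{-1}]\xrightarrow{\ I_\sigma\ }H^{*+2}(D;\Bbbk)[T,T^{-1}]\to SH^{*+2}_M\bigl(\{r\}\times\partial\bar M\bigr)\to H^{*+1}(D;\Bbbk)[T,T^{-1}]\to\cdots
\]
whose connecting maps are quantum multiplication by $\sigma$, that is, the operator $I_\sigma$ of the statement (replacing $\sigma$ by the Euler class $-\sigma=(-1)\cdot\sigma$ changes nothing, as $-1$ is a unit). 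Consequently $SH^*_M(\{r\}\times\partial\bar M)$ vanishes in all degrees if and only if $I_\sigma$ is a bijection. By hypothesis $I_\sigma$ fails to be injective or fails to be surjective; in the first case some $SH^*_M(\{r\}\times\partial\bar M)$ surjects onto a nonzero kernel of $I_\sigma$, in the second case a nonzero cokernel of $I_\sigma$ injects into some $SH^*_M(\{r\}\times\partial\bar M)$. Either way $SH^*_M(\{r\}\times\partial\bar M)\neq 0$ for every $r$, in particular for a sequence $r_i\to\infty$.

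With this in hand I would apply Theorem~\ref{t:conformal} to the filling $\bar M:=\bar M_1$ to conclude that every contact involutive map on $C$ witnessed by $\alpha$ has a fiber that is not contact displaceable (in fact with the conformal-factor-$1$ refinement, which we do not need). To finish, recall that for an involutive map $F=(f_1,\dots,f_N):D\to\RR^N$ the pullback $F\circ p:C\to\RR^N$ is a contact involutive map witnessed by $\alpha$: each $f_i\circ p$ is constant along the $S^1$-fibers, hence invariant under the Reeb flow of the connection one-form $\alpha$, and the $f_i\circ p$ pairwise Jacobi commute because the $f_i$ pairwise Poisson commute on $D$ and $d\alpha=p^*\omega_D$ — this is exactly the pullback construction recorded before Corollary~\ref{c:prequan}. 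Therefore $F\circ p$ has a contact non-displaceable fiber, which is the assertion.

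The hard part is the middle step. One must make precise that the Floer Gysin sequence of \cite{AK,BKK} computes exactly the invariant $SH^*_M(\{r\}\times\partial\bar M)$ featuring in Theorem~\ref{t:conformal} — and not, say, the symplectic cohomology $SH^*(M)$ of the ambient line-bundle total space, which is a genuinely different group — that its connecting homomorphisms are literally the operator $I_\sigma$ with the correct degree shift and Novikov conventions, and that the technical hypotheses of \cite{AK,BKK} are met by the contact manifold $C$. In effect this requires interfacing the Floer Gysin construction with the definition of symplectic cohomology with support used throughout the paper, presumably via an extension to the non-exact filling $\bar M$ of the comparison results of the appendices.
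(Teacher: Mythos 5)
You take a genuinely different route from the paper's. The paper proves Theorem~\ref{thm-prequantum} without ever invoking a filling: it develops symplectic cohomology with support directly on the exact symplectization $S(C)$ (Section~\ref{sec-quantum-proof}), uses the SFT compactness argument of Proposition~\ref{p:nonescape} (where the minimal Chern number hypothesis is consumed) to control Floer cylinders near the concave end, identifies $SH^*_{S(C)}(\{1\}\times C)$ with the Rabinowitz Floer homology of Bae--Kang--Kim \cite{BKK} (who also work on the symplectization), and then runs the displaceable-cover plus Mayer--Vietoris argument directly in $S(C)$ rather than invoking Theorem~\ref{t:conformal} as a black box. Because $S(C)$ is exact, $\Lambda=\Bbbk$, there are no capping or Novikov-ring issues, and the comparison with \cite{BKK} holds over an arbitrary commutative ring, exactly as the theorem's statement demands.

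Your route --- filling $C$ by the disk bundle inside the negative line bundle $M$, applying Theorem~\ref{t:conformal}, and verifying its hypothesis via the Floer Gysin sequence of Albers--Kang \cite{AK} --- is plausible, but the ``hard part'' you flag is a genuine gap and is larger than your closing paragraph suggests. The appendix comparisons (Proposition~\ref{p:filled}, Corollary~\ref{c:RFH}), which would let you rewrite the Albers--Kang Floer Gysin term as the paper's $SH^*_M(\{r\}\times\partial\bar M)$, are proved only for Liouville manifolds and over field coefficients; they rest on finiteness and commutation-of-limits arguments tailored to the exact case. Redoing this for the non-exact line bundle $M$ requires handling a non-trivial Novikov ring and a different Morse--Bott/perturbation set-up. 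Moreover $M$ need not be positively monotone even when $D$ is: if $[\omega_D]=\kappa\,c_1(TD)$ then on $\pi_2(M)\cong\pi_2(D)$ one finds $c_1(TM)=(1-\kappa)c_1(TD)$, so $M$ is negatively monotone or Calabi--Yau for $\kappa\geq 1$, and in the paper's framework defining $SH^*_M$ may then already force $\Bbbk\supseteq\QQ$ --- against the hypothesis of arbitrary $\Bbbk$. These are precisely the complications the paper's choice to work on $S(C)$ and match with \cite{BKK} rather than \cite{AK} is designed to sidestep.
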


This generalizes the result of Borman-Zapolsky \cite{BZ} and Granja-Karshon-Pabiniak-Sandon \cite{GKPS}. The main tool for the proof is the Floer-Gysin exact sequence of Bae-Kang-Kim \cite[Corollary 1.8]{BKK}. An important special case is when $\sigma$ is not a primitive class in $H^2(D;\ZZ)$. 

\begin{remark}
    We believe that the positively monotone and the minimal Chern number assumptions can be removed from the result.
\end{remark}

Let $p: (C,\xi)\to D$ be a prequantization bundle. Assume that $(D,\omega)$ is a smooth complex Fano variety equipped with a K\"ahler form. Using toric degenerations and the symplectic parallel transport technique, we can find many natural involutive maps $\pi: D\to B$ with non-empty fibers generically Lagrangian \cite{HK, Pr, KKP, GM}. Entov-Polterovich's big fiber theorem shows that $\pi$ needs to admit at least one Hamiltonian non-displaceable fiber. It is possible that for some $b\in B$, the fiber $\pi^{-1}(b)$ is Hamiltonian non-displaceable, but $(\pi\circ p)^{-1}(b)$ is contact displaceable. For example, the Clifford torus is non-displaceable in $\CC P^n$ while its preimage is contact displaceable in $(S^{2n+1},\xi_{st})$. Nevertheless, under the mild assumptions of Theorem \ref{thm-prequantum}, we show that $\pi\circ p$ also needs to have at least one contact non-displaceable fiber.

Although we did not include it in the statement for brevity, the conformal factor property still holds for an appropriate contact form in Theorem \ref{thm-prequantum}. Using this and an elementary geometric argument, we can also find applications of these results to the non-displaceability of Legendrian lifts of Lagrangians in $D$ from their Reeb closures in the style of Givental \cite{Giv}, Eliashberg-Hofer-Salamon \cite{EHS} and Ono \cite{Ono}. Our result applies more generally than these results but the conclusion is weaker in the sense that we do not have an estimate on the number of intersection points (c.f. \cite[Theorem 1.11]{BZ}).

\begin{theorem}\label{thm-elem-cont}
    Let $p: (C,\xi)\to D$ be a prequantization bundle with connection $1$-form $\alpha$. Let $L\subset D$ be a Lagrangian submanifold with a Legendrian lift $Z\subset C$ of $L$. That is, a Legendrian submanifold $Z$ of $C$ so that $p(Z)=L$ and $p\mid_Z:Z\to L$ is a finite covering map. If $\{1\}\times p^{-1}(L)\subset S(C)\cong (0,\infty)\times C$ is not Hamiltonian displaceable from itself inside $S(C)$, then $Z$ cannot be contactly displaced from $p^{-1}(L).$
\end{theorem}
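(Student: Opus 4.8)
The plan is to argue by contraposition: supposing there is a contact isotopy $(\phi_t)_{t\in[0,1]}$ of $C$ with $\phi_0=\mathrm{id}$ and $\phi_1(Z)\cap p^{-1}(L)=\varnothing$, I would produce a Hamiltonian diffeomorphism of $S(C)$ displacing $\{1\}\times p^{-1}(L)$ from itself, contradicting the hypothesis.

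The first observation is that $p^{-1}(L)$ is precisely the Reeb closure of $Z$: since $p|_Z\colon Z\to L$ is a finite covering and the Reeb flow $(\theta_s)$ of $\alpha$ is the (periodic) structure circle action of the bundle $p$, every $p$-fibre over a point of $L$ is the $\theta$-orbit of its intersection with $Z$, so $p^{-1}(L)=\bigcup_s\theta_s(Z)$; in particular $p^{-1}(L)$ is Reeb-invariant. Now pass to $S(C)=(0,\infty)\times C$: the Reeb flow lifts to a Hamiltonian circle action $\Theta$ (generated by $\mp r$), and a contactomorphism $\psi$ with $\psi^{*}\alpha=e^{\rho}\alpha$ lifts to the conical Hamiltonian diffeomorphism $\widehat\psi(r,x)=(e^{-\rho(x)}r,\psi(x))$. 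Since $\phi_1(Z)$ misses the Reeb-invariant set $p^{-1}(L)$, the same is true of $\theta_s\phi_1(Z)$ for every $s$, and so the lift $\widehat{\phi_1}$ maps $\RR_{>0}\times Z$, and each conjugate $\Theta_s\widehat{\phi_1}\Theta_{-s}$ maps $\RR_{>0}\times\theta_s(Z)$, off $\RR_{>0}\times p^{-1}(L)$ and hence off $\{1\}\times p^{-1}(L)$. In other words every Legendrian slice $\{1\}\times\theta_s(Z)$ of $\{1\}\times p^{-1}(L)=\bigcup_s\{1\}\times\theta_s(Z)$ is individually Hamiltonianly displaceable from the whole.

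The heart of the matter — and the step I expect to absorb most of the work — is to promote this leafwise displacement into a single Hamiltonian displacement of the compact Lagrangian $\{1\}\times p^{-1}(L)$, and this is where the extra Liouville direction of $S(C)$ comes in. The idea: using a Hamiltonian ``shear along the Reeb direction'' (time-one maps of the form $(r,x)\mapsto(r,\theta_{g(r)}(x))$ and their relatives, which exist precisely because of the circle symmetry of a prequantization form), one first moves $\{1\}\times p^{-1}(L)$ to a Hamiltonian-isotopic Lagrangian spread over a large band of Liouville heights, arranged so that the portion which still sits at height $1$ lies inside an arbitrarily thin Reeb-neighbourhood $\theta_{[-\epsilon,\epsilon]}(Z)$ of $Z$; by compactness $\epsilon$ can be chosen small enough that $\phi_1$ still displaces $\theta_{[-\epsilon,\epsilon]}(Z)$ from $p^{-1}(L)$. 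One then clears this residual height-$1$ part off $\{1\}\times p^{-1}(L)$ with a suitably cut-off version of $\widehat{\phi_1}$, the remainder of the spread Lagrangian being automatically disjoint from $\{1\}\times p^{-1}(L)$ because it lives away from height $1$. The genuine technical difficulties are that the Reeb orbits are circles, so the spreading can never be a complete unwinding and always leaves a nonempty residue at height $1$ that must be forced to lie near $Z$; that the flux of the shearing isotopy has to be compensated for it to be Hamiltonian; and that when $\deg(p|_Z)>1$ the Reeb coordinate is multivalued, so the construction must be carried out on an appropriate finite cover of $p^{-1}(L)$. Granting this construction — the elementary geometric input referred to in the introduction — one obtains a Hamiltonian diffeomorphism $\Psi$ of $S(C)$ with $\Psi(\{1\}\times p^{-1}(L))\cap\{1\}\times p^{-1}(L)=\varnothing$, which is the desired contradiction.

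Finally, the conformal-factor refinement stated just before the theorem should come out of keeping track of where the intersection points of $\widehat{\phi_1}(\{1\}\times p^{-1}(L))$ with $\{1\}\times p^{-1}(L)$ actually sit — such a point forces a $y\in p^{-1}(L)$ with $\phi_1(y)\in p^{-1}(L)$ and $(\phi_1^{*}\alpha)_y=\alpha_y$ — exactly as in the proof of Theorem~\ref{t:conformal}.
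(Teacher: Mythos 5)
Your overall plan matches the paper's: tilt $\{1\}\times p^{-1}(L)$ over a band of Liouville heights so that only the part close to $Z$ stays near height $1$, then clear that part with (a lift of) $\phi_1$. You also correctly anticipate the two genuine obstructions (flux of the tilting isotopy must vanish; the Reeb coordinate is only defined modulo the finite holonomy of $p|_Z$) and the conformal-factor refinement. However, the concrete mechanism you propose does not achieve the spread, and your handling of the non-residual part has a gap.

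First, the map $(r,x)\mapsto(r,\theta_{g(r)}(x))$ \emph{preserves} the Liouville height $r$. Applied to $\{1\}\times p^{-1}(L)$ (which is moreover Reeb-invariant) it does nothing at all, so no amount of such shears spreads the Lagrangian over a band of heights. What is actually needed is the transpose phenomenon: shear the height as a function of the Reeb coordinate, i.e. replace $\{1\}\times G$ (where $G:=p^{-1}(L)$) by the graph $G_t:=\{(1+tf(x),x): x\in G\}$ for a function $f:G\to\RR$ that is $\equiv -1$ outside a small neighbourhood $N$ of $Z$ and is invariant under parallel transport of the flat connection $\beta:=\alpha|_G$. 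The paper makes this the central construction. Your flux worry is exactly on point, and it is resolved there by also requiring $\int_{p^{-1}(b)}f\beta=0$ for every $b\in L$: since the holonomy of $\beta$ lies in a finite cyclic group, this forces $[f\beta]=0$ in $H^1(G;\RR)$, hence $f\beta$ is exact and the family $G_t$ is an exact, in fact Hamiltonian, isotopy of Lagrangians.

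Second, your assertion that ``the remainder of the spread Lagrangian [is] automatically disjoint from $\{1\}\times p^{-1}(L)$ because it lives away from height $1$'' does not survive the application of a \emph{cut-off} version of $\widehat{\phi_1}$: in the transition region where the cut-off interpolates between $\widehat{\phi_1}$ and the identity, heights are distorted in an uncontrolled way and the remainder can be pushed back to height $1$. The paper avoids cutting off at all. It uses the genuine conical lift $\Phi=\widehat{\phi_1}$ together with the conformal-factor estimate $\Phi(\{r\}\times C)\subset(0,e^{A}r)\times C$ (from Lemma~\ref{l:conformal}), and chooses $t_0$ with $1>t_0>1-e^{-A}$ so that $G_{t_0}$ decomposes into a part inside $(0,\infty)\times N$ (which $\Phi$ maps off $(0,\infty)\times G$ because $\phi_1(N)\cap G=\varnothing$) and a part at heights $<e^{-A}$ (which $\Phi$ maps to heights $<1$). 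Both pieces therefore miss $\{1\}\times G$, and that, rather than a cut-off, is what makes the final displacement go through.
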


Let us just note the following as a sample corollary, originally due to Givental \cite{Giv}.

\begin{corollary}
    Let $(\mathbb{R}{P}^{2n-1},\xi_{st})\to (\mathbb{C}{P}^{n},2\omega_{FS})$ be our prequantization bundle. Then, a Legendrian lift of the Clifford torus cannot be contactly displaced from the preimage of the Clifford torus.
\end{corollary}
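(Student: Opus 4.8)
The plan is to deduce the corollary from Theorem~\ref{thm-elem-cont}, whose hypothesis — that $\{1\}\times p^{-1}(L)$ is not Hamiltonian displaceable from itself inside $S(C)$ — will be supplied by Theorem~\ref{thm-prequantum} together with its conformal-factor refinement. Set $D=\CC P^{n-1}$ with $\omega_D=2\omega_{FS}$ (so $\int_{\CC P^1}\omega_{FS}=1$); then $[\omega_D]$ has the integral lift $\sigma=2H$, with $H$ the hyperplane class, and $C=\RR P^{2n-1}$ is the prequantization bundle with Euler class $-\sigma$, whose connection $1$-form $\alpha$ is the one appearing in Theorem~\ref{thm-elem-cont}. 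Let $L\subset D$ be the Clifford torus and $Z\subset C$ its Legendrian lift. The steps are: (i) check that Theorem~\ref{thm-prequantum} applies to $D$; (ii) apply it to the moment map of $\CC P^{n-1}$ and identify the non-displaceable fiber as $p^{-1}(L)$; (iii) convert contact non-displaceability of $p^{-1}(L)$ into Hamiltonian non-displaceability of $\{1\}\times p^{-1}(L)$ in $S(C)$; (iv) invoke Theorem~\ref{thm-elem-cont}.

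For step (i): we may assume $n\ge2$, so $D=\CC P^{n-1}$ is positively monotone with minimal Chern number $n\ge2$, as required. In $H^*(\CC P^{n-1};\Bbbk)[T,T^{-1}]$ the hyperplane class $H$ is invertible for the quantum product, because $H^{*n}$ is a power of the Novikov variable $T$ and hence a unit; therefore $I_\sigma=I_{2H}$ is the composition of the bijection $I_H$ with multiplication by $2$, and it is a bijection if and only if $2\in\Bbbk^\times$. Choosing $\Bbbk=\ZZ/2\ZZ$, the operator $I_\sigma$ is not a bijection, so Theorem~\ref{thm-prequantum} applies: the pullback to $C$ of any involutive map on $D$ has a contact non-displaceable fiber. (This is the ``$\sigma$ not primitive'' case flagged after Theorem~\ref{thm-prequantum}, since $\sigma=2H$.)

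For step (ii): I would take the involutive map on $D$ to be the moment map $\mu\co\CC P^{n-1}\to\RR^{n-1}$ of the standard torus action. By step (i), $\mu\circ p$ has a contact non-displaceable fiber $p^{-1}(\mu^{-1}(b))$ for some $b$. Now every fiber of $\mu$, including those over the boundary of the moment polytope, is displaceable in $\CC P^{n-1}$ \emph{except} the Clifford torus $L=\mu^{-1}(b_0)$ (barycenter $b_0$); this is classical — McDuff's probes for the open Lagrangian torus fibers, and the proper toric strata are displaceable as well (see also Cho and Fukaya--Oh--Ohta--Ono). As recalled before Corollary~\ref{c:prequan}, preimages under $p$ of displaceable subsets of $D$ are contact displaceable in $C$, so $b$ must be the barycenter and the contact non-displaceable fiber is $p^{-1}(L)$.

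For steps (iii)--(iv): as noted in the text, the conformal-factor conclusion of Theorem~\ref{t:conformal} also holds in the filling-free prequantization setting of Theorem~\ref{thm-prequantum}, with the connection $1$-form $\alpha$ as the witnessing contact form (the components of $\mu\circ p$ are Reeb-invariant and Jacobi-commute for $\alpha$). Applied to $p^{-1}(L)$, it produces, for every contactomorphism $\phi$ of $C$ isotopic to the identity, a point $q\in p^{-1}(L)$ with $(\phi^*\alpha)_q=\alpha_q$ and $\phi(q)\in p^{-1}(L)$. Under the standard correspondence between Hamiltonian isotopies of $S(C)\cong(0,\infty)\times C$ and contact isotopies of $(C,\alpha)$ — in which the conformal factor of a contactomorphism records the shift in the $r$-coordinate of its lift — this is exactly the assertion that $\{1\}\times p^{-1}(L)$ cannot be displaced from itself by a Hamiltonian diffeomorphism of $S(C)$. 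Theorem~\ref{thm-elem-cont}, applied to $L$ and $Z$, then gives the corollary. I expect the main obstacle to be step (iii): proving the conformal-factor refinement without a filling, verifying that its witnessing form can be taken to be $\alpha$, and cleanly translating ``conformal factor $1$'' into genuine Hamiltonian non-displaceability of the level $\{r=1\}$ over $L$ \emph{inside all of $S(C)$} rather than mere contact non-displaceability of $L$ in $C$; step (ii)'s appeal to displaceability of the non-central toric fibers of $\CC P^{n-1}$ is classical but should be quoted with care.
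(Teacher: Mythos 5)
Your step (iii) contains a genuine gap, and it is the crux of the argument. The conformal-factor refinement of Theorem~\ref{thm-prequantum} constrains only those Hamiltonian diffeomorphisms of $S(C)$ that arise as lifts of contactomorphisms of $C$, i.e.\ those generated by $r$-homogeneous Hamiltonians $H=r\,h_t(x)$. These form a strictly smaller class than arbitrary compactly supported Hamiltonian diffeomorphisms of $S(C)$, and Theorem~\ref{thm-elem-cont} demands non-displaceability against the latter, larger class. There is no ``standard correspondence'' identifying general Hamiltonian isotopies of $S(C)$ with contact isotopies of $C$ — only the homogeneous ones correspond — so the conformal-factor statement does not translate to the hypothesis of Theorem~\ref{thm-elem-cont}. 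In fact the logical direction in the paper is the opposite: Hamiltonian non-displaceability of $\{1\}\times K$ in $S(C)$ is what \emph{implies} the conformal-factor property (via Lemma~\ref{l:conformal}), never the reverse.

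The paper obtains the needed Hamiltonian non-displaceability directly, bypassing the contact non-displaceability conclusion of Theorem~\ref{thm-prequantum} altogether. It uses the fact, established in Section~\ref{sec-quantum-proof}, that $SH^*_{S(C)}(\{1\}\times C)\neq 0$ under the hypothesis that $I_\sigma$ is not invertible (here $\sigma=2H$ fails to be invertible over $\ZZ/2\ZZ$, as you correctly compute in step (i)). Since the Clifford torus $L$ is a stem for the toric fibration of $\CC P^{n-1}$ — every other moment-map fiber is Hamiltonian displaceable — if $\{1\}\times p^{-1}(L)$ were Hamiltonian displaceable in $S(C)$, one could cover $\{1\}\times C$ by finitely many Poisson-commuting, Hamiltonian-displaceable compact sets (a scaled neighborhood of $\{1\}\times p^{-1}(L)$ together with scaled preimages of the other, displaceable fibers). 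Mayer--Vietoris (Theorem~\ref{t:main rel} in the $S(C)$ setting) would then force $SH^*_{S(C)}(\{1\}\times C)=0$, a contradiction. This establishes exactly the hypothesis of Theorem~\ref{thm-elem-cont}. Your steps (i), (ii), and (iv) are fine; the repair for (iii) is to replace the attempted upgrade from contact to Hamiltonian non-displaceability with this direct Mayer--Vietoris argument using the stem property.
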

\begin{proof}
Consider the standard toric structure on $\CC P^n$. The Clifford torus $L$ is a stem of this fibration. Our method shows that $\{1\}\times p^{-1}(L)\subset S(C)\cong (0,\infty)\times C$ is not Hamiltonian displaceable because symplectic cohomology with support at $\{1\}\times C$ is non-vanishing. Now we apply Theorem \ref{thm-elem-cont}.
\end{proof}

\subsection*{Acknowledgments}
Y.S. is supported by the EPSRC grant EP/W015889/1. I.U. is partially supported by the Science Fund of the Republic of Serbia, grant no. 7749891, Graphical Languages - GWORDS. U.V. is partially supported by Science Academy’s Young Scientist Awards Program (BAGEP) in Turkey. We express our sincere gratitude to the referee for their thorough review.

\section{Applications}\label{sec:app}

\subsection{Ustilovsky spheres}

Now we explain a concrete family of examples. Consider the Brieskorn variety
$$
M(p,2,\ldots,2):= \{z\in \CC^{n+1}\mid z_0^p+z_1^2+\cdots+ z_n^2=\varepsilon \},
$$
where $z_0,\ldots, z_n$ are the coordinates of $z$ and $n,p\in\NN$ are natural numbers. For small $\varepsilon>0$, it admits a Liouville structure such that its ideal boundary is the Brieskorn manifold
\[\Sigma(p,2,\ldots, 2):=\left\{z\in\CC^{n+1}\:|\: \abs{z}=1\:\text{and}\: z_0^p+z_1^2+\cdots+ z_{n}^2=0\right\}.\]
There exists a non-commutative complete integrable system on $\Sigma(p,2,\cdots, 2)$ that we will describe shortly. Non-commutative integrable systems were introduced in contact geometry by Jovanovi\'c \cite{J} in analogy to non-commutative integrable systems in symplectic geometry \cite{N}, \cite{MF}. A complete non-commutative integrable system on a $(2n+1)$-dimensional contact manifold consists of a contact form and contact Hamiltonians $f_1,\ldots, f_{2n-r}$ such that they all Jacobi commute with constants and with $f_1,\ldots, f_r$ and such that the functions $f_1,\ldots, f_{2n-r}$ are independent\footnote{The functions $f_1,\ldots, f_k$ are independent at a point $p$ if the linear maps $df_1(p), \ldots, df_k(p)$ are linearly independent.} on a dense and open subset. The contact Hamiltonians $f_{r+1},\ldots, f_{2n-r}$ are not assumed to commute with each other. The extreme case where $r=n$ corresponds to complete contact integrable systems introduced by Banyaga and Molino in \cite{BM}.

An Ustilovsky sphere is the $(4m+1)$-dimensional Brieskorn manifold given by
\[\Sigma(p,2,\ldots, 2):=\left\{z\in\CC^{2m+2}\:|\: \abs{z}=1\:\text{and}\: z_0^p+z_1^2+\cdots+ z_{2m+1}^2=0\right\},\]
where $p\equiv \pm1\pmod{ 8}$. Each Ustilovsky sphere is diffeomorphic to the standard smooth sphere, however their contact structures are not standard and they are all mutually non-contactomorphic \cite{Ust}. In \cite{JJ}, Jovanovi\'c and Jovanovi\'c give an example of a complete non-commutative integrable system on Ustilovsky spheres using its standard contact form. This system consists of the following contact Hamiltonians:
\begin{align*}
    & g_j(z):=i(\overline{z}_{2j}z_{2j+1}-z_{2j}\overline{z}_{2j+1}),\\
    & h_j(z):= \abs{z_{2j}}^2 + \abs{z_{2j+1}}^2,\\
    & q_j(z):= i\left( \overline{z}_1^2z_{2j}^2 - z_1^2\overline{z}_{2j}^2 \right) + i\left( \overline{z}_1^2z_{2j+1}^2 - z_1^2\overline{z}_{2j+1}^2 \right),
\end{align*}
where $j=1,\ldots, m.$ The contact Hamiltonians $g_1,\ldots, g_m$ are the commutative part of the non-commutative integrable system: they commute with all the contact Hamiltonians $$g_1,\ldots, g_m, h_1,\ldots, h_{m}, q_1,\ldots, q_m.$$ In particular, $G=(g_1,\ldots, g_m, h_1)$ is an involutive map on the Ustilovsky sphere. The next corollary applies Corollary~\ref{c:contact big fiber} to the involutive map $G$ proving that $G$ has a non-displaceable fiber. By work of Whitney \cite{W}, the fibers of $G$ are stratified. The maximal dimension of the stratum is equal to $3m$.

\begin{corollary}\label{c:ustilovsky}
    Let $C:=\Sigma(p,2,\ldots,  2)$ be an Ustilovsky sphere of dimension $4m+1$. Then, $C$ has a codimension-$(m+1)$ closed stratified submanifold that is contact non-displaceable.
\end{corollary}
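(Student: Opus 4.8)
The plan is to derive this from Corollary~\ref{c:contact big fiber} applied to the involutive map $G=(g_1,\ldots,g_m,h_1):C\to\RR^{m+1}$, so that the work reduces to producing a finite type complete Liouville filling of the Ustilovsky sphere $C$ with non-vanishing symplectic cohomology and then reading off the dimension of the resulting non-displaceable fiber. The natural filling is the Milnor fibre $M(p,2,\ldots,2)$ of the $A_{p-1}$ singularity $z_0^p+z_1^2+\cdots+z_{2m+1}^2$ in $\CC^{2m+2}$: as recalled above, for small $\varepsilon>0$ it carries a finite type complete Liouville (in fact Weinstein) structure whose ideal contact boundary is canonically $\Sigma(p,2,\ldots,2)=C$ with its standard contact form $\alpha$. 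So it remains to check (i) $SH^*(M(p,2,\ldots,2))\neq 0$, (ii) that $G$ is a contact involutive map witnessed by $\alpha$, and (iii) that the fibre produced by Corollary~\ref{c:contact big fiber} is a closed stratified submanifold of codimension $m+1$.

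Step (i) is the only substantial point, and I expect it to be the main obstacle. I would establish it either by citing the known computations of Floer-theoretic invariants of Brieskorn manifolds of type $(p,2,\ldots,2)$ (Kwon--van Koert, Uebele, Fauck), or, should a direct reference for the non-equivariant symplectic cohomology in this dimension be unavailable, as follows. By Ustilovsky's theorem the cylindrical contact homology of $\Sigma(p,2,\ldots,2)$ is non-zero in infinitely many degrees, the congruence $p\equiv\pm1\pmod 8$ guaranteeing that all Reeb orbits are good so that no sign cancellations occur; hence the positive symplectic cohomology $SH^*_+(M(p,2,\ldots,2))$ is infinite-dimensional. On the other hand the Milnor fibre has the homotopy type of a finite wedge of $(2m+1)$-spheres, so its ordinary cohomology is finite-dimensional, and the tautological exact sequence relating the cohomology of $M(p,2,\ldots,2)$, $SH^*(M(p,2,\ldots,2))$ and $SH^*_+(M(p,2,\ldots,2))$ forces $SH^*(M(p,2,\ldots,2))\neq 0$. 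Equivalently, Fauck's non-vanishing of Rabinowitz--Floer homology for these Brieskorn manifolds, combined with Corollary~\ref{c:boundary vanishing} (which applies since the Milnor fibre has finite topology and is exact, so $H^*_c$ is nilpotent), yields the same conclusion.

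For (ii), the components $g_1,\ldots,g_m,h_1$ are polynomials in the quantities $\overline{z}_{2j}z_{2j+1}$, $\abs{z_{2j}}^2$ and $\abs{z_{2j+1}}^2$ (with $j\ge 1$, and the indices ranging over $z_2,\ldots,z_{2m+1}$), which are invariant under the weighted circle action that is the Reeb flow of $\alpha$ on $\Sigma(p,2,\ldots,2)$, since $z_{2j}$ and $z_{2j+1}$ have equal weights; moreover the excerpt records that $g_1,\ldots,g_m$ Jacobi-commute with all of $g_1,\ldots,g_m,h_1,\ldots,h_m,q_1,\ldots,q_m$. Hence the components of $G$ are Reeb-invariant and pairwise Jacobi-commuting, so $G$ is contact involutive witnessed by $\alpha$, and Corollary~\ref{c:contact big fiber} produces a fibre $K=G^{-1}(b)$ that is not contact displaceable in $C$. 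Finally, for (iii): $K$ is compact, being a fibre of a smooth map from the compact manifold $C$ to $\RR^{m+1}$, and by Whitney's theorem it is a stratified space whose strata are submanifolds of $C$ of dimension at most $\dim C-(m+1)=3m$, the maximal stratum having dimension exactly $3m$. Thus $K$ is the asserted contact non-displaceable closed stratified submanifold of codimension $m+1$, which completes the argument; apart from (i), all steps are formal.
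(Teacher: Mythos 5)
Your proposal is correct and follows essentially the same route as the paper: the same Liouville filling by the Brieskorn/Milnor fibre $M(p,2,\ldots,2)$, the same invocation of $SH^*(M)\neq 0$ from the Brieskorn manifold literature (the paper cites Kwon--van Koert), the same involutive map $G=(g_1,\ldots,g_m,h_1)$ from Jovanovi\'c--Jovanovi\'c, the same appeal to Corollary~\ref{c:contact big fiber}, and the same Whitney stratification dimension count. The extra sketches you give for step (i) are unnecessary given the existing reference, but they do not change the structure of the argument.
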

\begin{proof}
    The Ustilovsky sphere is the ideal boundary of the Brieskorn variety 
    \[M:=\left\{z\in\CC^{2m+2}\:|\: z_0^p+z_1^2+\cdots+ z_{2m+1}^2=\varepsilon\right\}\]
    with non-vanishing symplectic cohomology $SH^*(M)\not=0$ \cite{KK}. By Proposition~4.4 in \cite{JJ}, the smooth map 
    \[G=(g_1, \ldots, g_m, h_1):C\to\RR^m,\]
    where $g_j(z):= i(\bar{z}_{2j}z_{2j+1}- z_{2j}\bar{z}_{2j+1})$ and $h_1=\abs{z_{2}}^2 + \abs{z_{3}}^2$, is involutive. Hence, Corollary~\ref{c:contact big fiber} implies that there exists a non-displaceable fiber of $G$, which is a stratified submanifold of maximal dimension $3m$.
\end{proof}
The non-displaceability of Corollary~\ref{c:ustilovsky} cannot be proved using only smooth topological methods. Indeed, Ustilovsky sphere $C$ is diffeomorphic to the standard sphere, and every non-dense subset of the standard sphere can be smoothly displaced. Corollary~\ref{c:ustilovsky} implies in particular Theorem~1.2 from \cite{U} which states that each Ustilovsky sphere admits two smoothly embedded closed balls such that one of them cannot be contactly squeezed into the other. We now elaborate why \cite[Theorem~1.2]{U} follows from Corollary~\ref{c:ustilovsky}. Let $C$ be an Ustilovsky sphere. Corollary~\ref{c:ustilovsky} implies that there exists a compact proper subset $K$ of $C$ such that $K$ is contact non-displaceable. As a homotopy sphere (actually, the standard smooth sphere), $C$ admits a Morse function $f:C\to\mathbb{R}$ with precisely two critical points: the minimum and the maximum. Moreover, the Morse function $f$ can be chosen in such a way that the critical points are not contained in $K$. The sublevel sets $f^{-1}((-\infty, c])$ for a value $c$ between the minimum and the maximum are smoothly embedded closed balls. For $c$ sufficiently close to the maximum, we have $K\subset f^{-1}((-\infty, c]).$ And, for $c$ sufficiently close to the minimum, the set $f^{-1}((-\infty, c])$ is disjoint from $K$. Since $K$ is contact non-displaceable, this means that sufficiently big (but not equal to $C$) sublevel set cannot be contactly squeezed into sufficiently small (but having more than 1 element) sublevel set of $f$. This proves \cite[Theorem~1.2]{U} because these sublevel sets are smoothly embedded closed balls.

\begin{remark}
    The reader surely has noticed that we only used the commutative part $g_1,\ldots, g_m$ of the non-commutative contact integrable system $g_1,\ldots, g_m, h_1,\ldots, h_{m}, q_1,\ldots, q_m$ together with an additional integral $h_1$. We think it is an interesting question whether one can improve the codimension of the rigid subset by creating a larger set of commuting functions from the integrals of this non-commutative system. This question of turning a non-commutative integrable system into a commutative one has been extensively studied \cite{BJ} but it seems difficult to get a bound on the codimension of the fibers using the constructions in the literature.
\end{remark}

\subsection{Quadrics}\label{sec:quadrics}

We will focus our attention on the example of quadrics to illustrate the prequantization bundle case. Let $Q_n$ be the complex $n$-dimensional quadric hypersurfaces in $\CC P^{n+1}$, with the induced Fubini-Study symplectic structure. It admits a Donaldson hypersurface, symplectomorphic to $Q_{n-1}$ up to scaling, such that $Q_n-Q_{n-1}$ is symplectomorphic to a disk subbundle of $T^*S^n$. Here $S^n$ is the standard $n$-dimensional sphere and $T^*S^n$ is equipped with the standard sympeclectic form. Therefore $T^*S^n$ admits an ideal contact boundary which is a prequantization bundle over $Q_{n-1}$. 

In general, $Q_n$ admits a Gelfand-Zeitlin integrable system whose moment polytope is a simplex in $\RR^n$, determined by
\begin{equation}\label{eq:GZ}
    n\geq u_n\geq \cdots \geq u_2\geq \abs{u_1}.
\end{equation}
See \cite[Section 2.2]{Kim} for a nice summary of construction.

It is known that $Q_2$ is symplectomorphic to $(S^2\times S^2,\sigma\times\sigma)$ where the two factors both have the same area. Hence there exist two integrable systems on $Q_2$. The first one is the standard toric system, whose moment polytope is a square. The second one is the Gelfand–Zeitlin system, whose moment polytope is a triangle, see Figure \ref{fig:Q2}.

\begin{figure}
\begin{tikzpicture}[scale=2]

\draw (0,0)--(1,0)--(1,1)--(0,1)--(0,0);
\draw [blue,fill=blue] (0.5,0.5) circle (1pt);

\draw (3,1)--(5,1)--(4,0)--(3,1);
\draw [blue] (4,0.5)--(4,0);
\draw [blue,fill=blue] (4,0.5) circle (1pt);
\draw [blue,fill=blue] (4,0) circle (1pt);

\end{tikzpicture}
\caption{Two moment polytopes for $Q_2$. On the left, the dot represents the barycenter. On the right, the line segment is contained in the median and the interior dot is at the midpoint of this median.}\label{fig:Q2}
\end{figure}
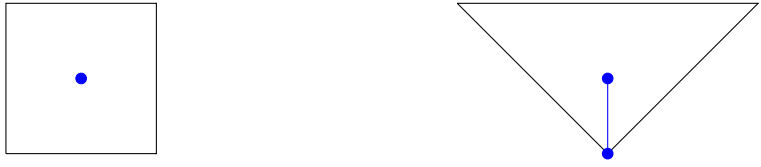

We recall some results about the two fibrations on $Q_2$, using Figure \ref{fig:Q2}.
\begin{enumerate}
    \item The central fiber in the square is a Lagrangian torus, called the Clifford torus, which is a stem in this fibration, by McDuff \cite[Theorem 1.1]{McDuff}.
    \item The fiber over the interior dot in the triangle is a Lagrangian torus, called the Chekanov torus. The fiber over the bottom vertex in the triangle is a Lagrangian sphere, which is the anti-diagonal.
    \item Any fiber over the blue segment the triangle, including the ends, is non-displaceable by Fukaya-Oh-Ohta-Ono \cite[Theorem 1.1]{FOOO}. Other fibers are displaceable.
\end{enumerate}

\begin{figure}
\begin{tikzpicture}

\draw (0,0)--(2,1)--(2,-3)--(-1,-1)--(2,1);
\draw [red] (0,0)--(-1,-1);
\draw [dotted] (0,0)--(2,-3);

\draw [->] (0,0)--(0,1.5);
\draw [->] (2.2,0)--(3,0);
\draw [dotted] (0,0)--(2.2,0);
\draw [dotted] (-1,-1)--(-1.2,-1.2);
\draw [->] (-1.2,-1.2)--(-1.8,-1.8);

\node [right] at (0,1.6) {$u_1$};
\node [right] at (3,0) {$u_2$};
\node [below] at (-1.8,-1.8) {$u_3$};

\draw [red,fill=red] (0,0) circle (2pt);
\draw [blue] (-1,-1)--(0.5,-1);
\draw [blue,fill=blue] (0.5,-1) circle (2pt);

\draw [red] (6,1)--(6,-2);
\draw (6,1)--(9,-2);
\draw [red,fill=red] (6,1) circle (2pt);
\draw [->, dotted] (7,0)--(7,-1.5);
\draw (7,0) node {$\times$};
\draw (7,-0.8) node {$\times$};
\node [right] at (7,0.2) {$w$};
\node [left] at (6.8,-0.8) {$x$};
\node [below] at (7,-1.6) {$p_v(w)$};
\node [below] at (7,-2.5) {$\{u_1=0\}$-plane};

\end{tikzpicture}
\caption{Moment polytope for $Q_3$ and a probe in $T^*S^3$. The vertices of the polytope on the left are $(0,0,0),(0,0,3),(\pm 3,3,3).$}\label{fig:Q3}
\end{figure}
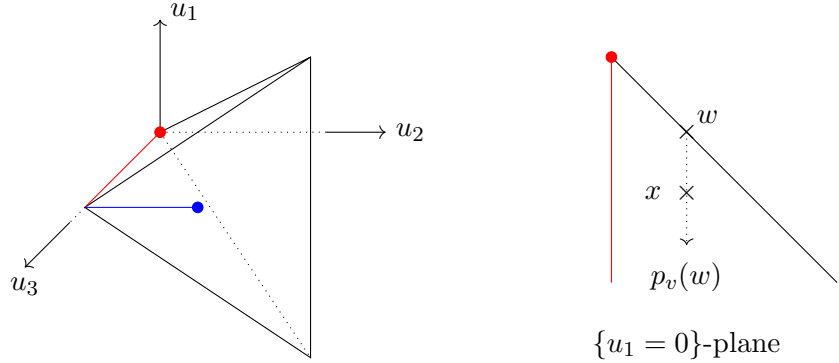

Using equation (\ref{eq:GZ}) for $n=3$, we get a moment polytope for $Q_3$, depicted in Figure \ref{fig:Q3}. One can think of this polytope as a cone over the (rotated) triangle in Figure \ref{fig:Q2}, which corresponds to the face $\{u_3=3\}$. If we remove this face, we get a disk subbundle of $T^*S^3$.  

\begin{lemma}\label{l:Chekanov}
    The whole $T^*S^3$ admits an integrable system whose polytope is $P:=\{u_3\geq u_2\geq \abs{u_1}\}$ in $\RR^3$. It satisfies that
    \begin{enumerate}
        \item All fibers are compact smooth isotropic submanifolds.
        \item The fiber over $(0,0,0)$ is the Lagrangian zero section $S^3$, red dot in Figure \ref{fig:Q3}.
        \item The fiber over $(0,0,\lambda>0)$ is a Lagrangian $S^1\times S^2$, red line in Figure \ref{fig:Q3}.
        \item Any fiber that is not over $(0,0,\lambda\geq 0)$ is Hamiltonian displaceable in $T^*S^3$.
    \end{enumerate}
\end{lemma}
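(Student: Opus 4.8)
The plan is to build the integrable system on $T^*S^n$ (with $n=3$) by the same mechanism that produces Gelfand--Zeitlin systems on quadrics, but carried out directly on the cotangent bundle, and then to check the four listed properties by a combination of explicit computation near the zero section and a symplectic parallel transport / displacement argument away from it. First I would recall that $Q_n - Q_{n-1}$ is symplectomorphic to a disk subbundle $D_\rho T^*S^n$, and that the Gelfand--Zeitlin system on $Q_n$ restricts to a system on this disk subbundle whose moment image is the truncated cone $P \cap \{u_n \le \rho\}$ under the simplex description \eqref{eq:GZ}. The key point is that the functions $u_1, \ldots, u_{n-1}$ on $Q_n$ are homogeneous (they are built from the norms $\abs{z_i}^2$ and bilinear pairings on the ambient $\CC^{n+1}$) in a way that their restriction to the affine quadric complement extends \emph{past} the disk of radius $\rho$ to all of $T^*S^n$: concretely, one writes $T^*S^n = \{(q,p) : \abs{q}=1, \langle q,p\rangle = 0\}$ and transplants the $\mathrm{SO}(n)\times\mathrm{SO}(2)$-type invariants (lengths of truncated momentum vectors, as in the classical Thimm trick) to get Poisson-commuting functions $u_1, \ldots, u_n$ defined globally, with $u_n = \abs{p}$ essentially the radial coordinate. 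Scaling in the fibers identifies the full polytope with the unbounded cone $P = \{u_3 \ge u_2 \ge \abs{u_1}\}$.

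Once the system is in hand, property (1) is the statement that the fibers are compact isotropic — compactness because $u_n = \abs{p}$ is proper on $T^*S^n$ so the fibers are closed subsets of the compact sphere bundles $\{\abs{p}=c\}$, and isotropy because the differentials of Poisson-commuting functions span an isotropic subbundle wherever they are independent, with the singular strata handled by Whitney stratification as in the Ustilovsky discussion. Properties (2) and (3) are local computations at the apex edge of the cone: over $(0,0,0)$ all the momentum-length invariants vanish, forcing $p=0$, so the fiber is the zero section $S^3$; over $(0,0,\lambda)$ with $\lambda>0$ one computes that the constraints $u_1 = u_2 = 0$ cut out, inside the sphere bundle $\{\abs{p}=\lambda\}$, the subbundle where the $\mathrm{SO}(2)$-reduced momentum degenerates maximally, which one identifies with $S^1 \times S^2$ — here I would mirror the quadric calculation showing the corresponding $Q_n$-fiber over the vertex of the triangle is the anti-diagonal sphere / a suitable bundle, and simply read off what happens after deleting the face $\{u_3 = \rho\}$.

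Property (4) is where I expect the real work to be, and it is the reason the lemma is phrased the way it is: I would prove it by \emph{probes} in the Abreu--Borman--McDuff sense adapted to this non-toric, non-compact setting — exactly the picture drawn on the right-hand side of Figure~\ref{fig:Q3}, where a fiber over an interior point $w$ is displaced by pushing it along a line segment $p_v(w)$ into the facet $\{u_1 = 0\}$ transverse to an integral direction $v$. The subtlety is that away from $(0,0,\lambda\ge 0)$ the GZ system is toric (its generic fibers carry a free torus action of the expected rank), so on the open dense locus one can run the toric probe argument verbatim; the points one must be careful about are those on the singular walls $\{u_1 = 0\}$ and $\{u_2 = u_1\}$ that are \emph{not} on the distinguished edge, and there one needs either a slightly longer probe or the observation that such a fiber is contained in a genuinely toric open subset (away from the zero section the local model is $\CC^{n-1}\times T^*S^1$-like). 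Non-compactness is not an obstruction for displacement since the displacing Hamiltonians can be taken compactly supported in a neighborhood of the chosen fiber. The main obstacle, then, is organizing the probe/displacement bookkeeping for every point of $P$ off the ray $\{(0,0,\lambda):\lambda\ge 0\}$ uniformly — I would handle it by stratifying $P$ by which walls a point lies on and exhibiting an explicit probe for each stratum, exactly as one does for the triangle in Figure~\ref{fig:Q2} where every point off the blue median segment is displaceable.
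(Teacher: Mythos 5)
Your strategy for item (4)---McDuff probes supported on the toric locus---is the right family of ideas, but the execution as described contains a genuine error and also misses a simpler route that the paper uses. First, $\{u_1=0\}$ is \emph{not} a facet of $P=\{u_3\ge u_2\ge|u_1|\}$; it is an interior slice (the facets of $P$ are $\{u_3=u_2\}$, $\{u_2=u_1\}$ and $\{u_2=-u_1\}$). McDuff's displacement lemma requires the probe to start at an interior point $w$ of a genuine facet, with direction $v$ integrally transverse to that facet, so a probe ``into the facet $\{u_1=0\}$'' is not admissible as written. The paper's probe for a point $x=(0,a>0,b)$ starts at $w=(0,a,a)$ in the interior of the actual facet $F=\{u_2=u_3\}$, with $v=(0,0,1)$; this probe happens to lie entirely in the plane $\{u_1=0\}$, which is why the figure is drawn there, but it originates on $F$, and integral transversality is checked against $F$. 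Second, the paper dispatches all fibers with $u_1\neq0$ with no probes at all: such a fiber sits in a single sphere bundle $\{|p|=\mathrm{const}\}$, identified with the prequantization circle bundle $p\colon C\to Q_2$, and under $p$ it projects to a GZ fiber of $Q_2$ off the median segment, hence Hamiltonian displaceable; the displacing isotopy then lifts via the conformal-factor computation of Section~\ref{sec:conformal}. That reduction entirely bypasses the stratum-by-stratum probe bookkeeping you propose near the walls $\{u_2=\pm u_1\}$, which is exactly where your proposal is vaguest.

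Two smaller points. Your appeal to non-compactness (``displacing Hamiltonians can be taken compactly supported'') is not the operative one; what actually matters, as the paper's Remark immediately after the lemma spells out, is that in the unbounded polytope $P$ the probe has infinite length, so McDuff's distance-to-midpoint hypothesis holds for every $x$ on it---whereas in the compact $Q_3$ picture the corresponding probe terminates and a preimage of the Chekanov torus over it is in fact non-displaceable. Finally, for items (1)--(3) the paper simply cites \cite[Example~2.4]{Kim}. Rebuilding the Gelfand--Zeitlin system on $T^*S^3$ from the Thimm trick, as you sketch, is a reasonable alternative but requires you to actually carry out the fiber identifications (in particular verifying the $S^1\times S^2$ fiber over $(0,0,\lambda)$) rather than assert them; as written, that part of your proposal is a plan, not a proof.
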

\begin{proof}
    The first three items are in \cite[Example 2.4]{Kim}. We will prove $(4)$ by using McDuff's probe. In the following we freely use the notions from \cite[Section 2]{McDuff}.
    
    It suffices to consider fibers with $u_1=0$. Otherwise they are preimages of displaceable fibers in $Q_2$ under the prequantization bundle map. Pick a point $x=(0,a>0,b)$ in $P$, we have $w:=(0,a,a)$ in the interior of the facet $F:=\{u_2=u_3\}$ and $x$ is on the probe $p_v(w)$ with direction $v:=(0,0,1)$. Two vectors $(0,1,1)$ and $(1,0,0)$ are parallel to $F$ hence $v$ is integrally transverse to $F$. Then \cite[Lemma 2.4]{McDuff} says that the fiber over $x$ is displaceable. Note that this Lemma assumes that the integrable system is toric. However its proof is local and our probe $p_v(w)$ lives totally in the toric part. See Figure \ref{fig:Q3} which shows the probe in the $u_2u_3$-plane.
\end{proof}

\begin{remark}
\begin{enumerate}
    \item In the above we used that our probe $p_v(w)$ has infinite length hence the distance between $x$ and $w$ is always less than half of the length of the probe. This is not true in the compact $Q_3$ picture, since certain preimage of the Chekanov torus is indeed non-displaceable in $Q_3$, by \cite[Corollary 4.3]{Kim}.
    \item The probe method shows that the preimage of the Chekanov torus is Hamiltonian displaceable in $T^*S^3$. The Hamiltonian isotopy pushes it away to infinity. A natural question is: is it contact displaceable within the ideal boundary?
\end{enumerate}

\end{remark}

Let $E_T$ be the preimage of the Clifford torus in the ideal boundary $C$ of $T^*S^3$ and $E_S$ of the anti-diagonal. Identify $T^*S^3$ with $S\cup (\RR_+\times C)$ where $S$ is the zero section. If we view $\{1\}\times C$ as the preimage of the triangle $\{u_3=1\}$ in the moment polytope in Lemma \ref{l:Chekanov}, then $E_S$ is the fiber over $(0,0,1)$. For $E_T$, we will use another polytope for $T^*S^3$, which is a cone over the square on the left side of Figure \ref{fig:Q2}.

\begin{corollary}
    $E_T$ and $E_S$ are both contact non-displaceable in the ideal boundary $C$ of $T^*S^3$.
\end{corollary}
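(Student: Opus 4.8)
The plan is to deduce the corollary from Theorem~\ref{thm-prequantum} (or equivalently from Corollary~\ref{c:prequan}) applied to the prequantization bundle $p: C \to Q_2$, using the Gelfand--Zeitlin integrable system on $Q_2$ whose moment polytope is the triangle in Figure~\ref{fig:Q2}. First I would observe that $Q_2 \cong S^2 \times S^2$ is positively monotone with minimal Chern number $2$, and that the relevant Euler class $\sigma$ is an integral lift of $[\omega_{Q_2}]$; the essential algebraic input is that quantum multiplication by $\sigma$ fails to be invertible on $H^*(Q_2;\Bbbk)[T,T^{-1}]$. For $S^2 \times S^2$ with its monotone form the quantum cohomology is $\Bbbk[a,b]/(a^2 - T^2, b^2 - T^2)$ with $\sigma = a + b$ (up to normalization), and one checks $(a+b)^2 = 2T^2 + 2ab$, so $\sigma$ is not a unit: indeed multiplication by $a+b$ annihilates $a - b$ after a suitable rescaling, or more cleanly $\sigma$ lies in the augmentation-type ideal and is nilpotent modulo invertibles in a way that prevents bijectivity. (Alternatively, one invokes the special case of Theorem~\ref{thm-prequantum} noting whether $\sigma$ is primitive, or simply cites that $T^*S^2$-type fillings have nonvanishing symplectic cohomology.) This gives a contact non-displaceable fiber of $F \circ p$ for \emph{every} involutive map $F$ on $Q_2$.

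Next I would promote this from ``some fiber'' to ``the specific fibers $E_T$ and $E_S$.'' The key geometric fact, already recorded above, is item (3) in the list of results about $Q_2$: along the Gelfand--Zeitlin triangle, \emph{every} fiber over the blue segment---including its two endpoints, the Chekanov torus end and the anti-diagonal vertex---is Hamiltonian non-displaceable in $Q_2$, while every other fiber is displaceable. Since displaceability of $K \subset Q_2$ implies contact displaceability of $p^{-1}(K) \subset C$, all the preimages of the displaceable fibers are contact displaceable; therefore the contact non-displaceable fiber guaranteed by Theorem~\ref{thm-prequantum} must be the preimage of one of the non-displaceable fibers on the segment. The Clifford torus, however, is not on this segment: it is the center of the \emph{square} polytope, and as noted it is a stem, so its preimage in the standard $S^5$ is contact displaceable---hence I cannot directly argue about $E_T$ through the triangle. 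To handle $E_T$ I would instead run the same argument using the toric (square) moment map $\pi_{\square}: Q_2 \to \RR^2$: the Clifford torus is a stem of $\pi_{\square}$, meaning all other fibers are displaceable, so the unique contact non-displaceable fiber of $\pi_{\square}\circ p$ forced by the theorem must be $p^{-1}(\text{Clifford}) = E_T$. For $E_S$, among fibers of the triangular system only the segment fibers survive; to pin down the vertex (the anti-diagonal $S$) rather than an interior Chekanov torus, I would use the extra information that the probe computations above show the preimage of the Chekanov torus in $T^*S^3$ is Hamiltonian displaceable, combined with the compatibility of the two fibrations---or, more robustly, apply the theorem to a \emph{refined} involutive map on $Q_2$ that has the anti-diagonal as a stem (e.g.\ perturbing so that only the vertex fiber is non-displaceable), so that the forced non-displaceable fiber is exactly $p^{-1}(S) = E_S$.

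The main obstacle I anticipate is precisely this last localization step for $E_S$: Theorem~\ref{thm-prequantum} only produces \emph{one} non-displaceable fiber of a given involutive map, and the Gelfand--Zeitlin system on $Q_2$ has a whole segment of candidates. One needs either (a) a second involutive map for which the anti-diagonal alone is the big fiber---this requires exhibiting a Poisson-commuting family whose fibers are all displaceable except over the anti-diagonal, which should follow from a probe/stem argument analogous to Lemma~\ref{l:Chekanov} but must be checked---or (b) a more refined version of the big fiber theorem tracking which fiber is non-displaceable, which is not stated in the excerpt. I would pursue route (a): verify via McDuff's probe method that every Gelfand--Zeitlin fiber of $Q_2$ except the anti-diagonal vertex can be displaced after a small modification of the system near the Chekanov end, so that the rigidity forced by Theorem~\ref{thm-prequantum} lands on $E_S$. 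For $E_T$ the stem property of the Clifford torus makes the argument clean and I expect no difficulty there; the bookkeeping of the two different integrable systems on $Q_2 \cong S^2\times S^2$ and confirming that $I_\sigma$ is non-invertible for the monotone form is routine but should be stated explicitly.
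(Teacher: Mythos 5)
Your route is genuinely different from the paper's, and for $E_S$ it has a gap that your own fix (a) cannot close. The paper does not invoke Theorem~\ref{thm-prequantum} at all here; it argues directly in the Liouville filling. Assuming $E_S$ (resp.\ $E_T$) were contact displaceable in $C$, one observes that it is then Hamiltonian displaceable in $T^*S^3$ as a subset of $\{1\}\times C$, and combines this with Lemma~\ref{l:Chekanov}(4) (resp.\ the stem property of the Clifford torus) to produce a finite Poisson commuting cover of $\{1\}\times C$ by Hamiltonian displaceable sets. Theorem~\ref{t:main rel} then forces $SH^*_{T^*S^3}(\{1\}\times C)=0$, which contradicts $SH^*(T^*S^3)\neq 0$ via Corollary~\ref{c:boundary vanishing}. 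The crucial point you are missing is that the probe argument of Lemma~\ref{l:Chekanov} takes place in the non-compact manifold $T^*S^3$, where the preimage of the Chekanov torus \emph{is} Hamiltonian displaceable (pushed off to infinity), even though the Chekanov torus is non-displaceable in the closed $Q_2$. That is exactly how the ``segment of non-displaceable fibers'' ambiguity dissolves.

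Your route (a) for $E_S$ --- engineering an involutive map on $Q_2$ whose only non-displaceable fiber is the anti-diagonal --- is blocked precisely by the FOOO input you quote: in $Q_2$ itself, every Gelfand--Zeitlin fiber over the blue segment, including the Chekanov torus, is non-displaceable, so probes in $Q_2$ cannot remove those fibers. No amount of modifying the system ``near the Chekanov end'' changes this, since the obstruction is Floer-theoretic, not a feature of the chosen moment map. Route (b), a refined big fiber theorem that tracks which fiber is rigid, is also not available. The correct repair is to abandon Theorem~\ref{thm-prequantum} for this corollary and instead run the displaceability argument inside the filling $T^*S^3$, using $SH^*(T^*S^3)\neq 0$ directly. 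Your argument for $E_T$ via the stem property of the Clifford torus is sound in spirit, but it is still cleaner (and avoids the quantum-cohomology verification of the $I_\sigma$ hypothesis, where the normalization of $\sigma$ needs care) to phrase it the same way as the $E_S$ case: a contact displacement of $E_T$ plus the stem cover of $Q_2$ pulled back to $\{1\}\times C$ yields a Poisson commuting displaceable cover of $\{1\}\times C$ in $T^*S^3$, contradicting $SH^*(T^*S^3)\neq 0$.
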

\begin{proof}
    Suppose that $E_S$ is contact displaceable in $C$, then $E_S$ is Hamiltonian displaceable in $T^*S^3$, viewed as a subset of $\{1\}\times C\subset T^*S^3$. Combined with $(4)$ of Lemma \ref{l:Chekanov}, we get a finite Poisson commuting cover of $\{1\}\times C\subset T^*S^3$ with Hamiltonian displaceable elements. Here we view $\{1\}\times C$ as the preimage of the triangle $\{u_3=1\}$ in the moment polytope of $T^*S^3$. Our assumption implies that any fiber living above this triangle is Hamiltonian displaceable. Therefore, any point in this triangle admits a small neighborhood in the moment polytope whose preimage is Hamiltonian displaceable in $T^*S^3$. The preimages of these neighborhoods give us a Poisson commuting cover of $\{1\}\times C\subset T^*S^3$, and we pick a finite subcover by compactness. Therefore $SH^*_{T^*S^3}(\{1\}\times C)=0$, which contradicts Corollary \ref{c:boundary vanishing} and $SH^*(T^*S^3)\neq 0$. The same proof works for $E_T$.
\end{proof}

\subsubsection{Non-compact skeleta}

We now aim to obtain another perspective on the intersection result of \cite[Theorem 1.2]{AD} when $n=3$. 

\begin{corollary}\label{c:EtEc}
For any compact subset $K$ of $T^*S^3$ which is disjoint from $S\cup(\RR_+\times E_T)$ or $S\cup(\RR_+\times E_S)$, we have that $SH^*_M(K)=0$.

\end{corollary}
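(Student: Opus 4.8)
The statement follows by combining the Mayer--Vietoris property of symplectic cohomology with support (Theorem~\ref{t:main rel}) with the displaceability results already established for $T^*S^3$. Fix a compact $K\subset T^*S^3$ disjoint from, say, $S\cup(\RR_+\times E_S)$ (the case of $E_T$ is identical). The plan is to produce a finite Poisson commuting cover of $K$ all of whose members are Hamiltonian displaceable in $T^*S^3$; then Theorem~\ref{t:main rel} forces $SH^*_M(K)=0$ directly (no need to invoke Corollary~\ref{c:boundary vanishing}).

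\textbf{Step 1: transport $K$ into a neighborhood of $\{1\}\times C$.} Since $K$ is compact and disjoint from the skeleton $S$, the Liouville flow (defined for all positive time away from $S$) carries $K$ into the region $(1,\infty)\times C$ after finite time; rescaling, we may assume $K\subset (1-\delta,1+\delta)\times C$ for small $\delta$. Because $SH^*_M$ is invariant under such Liouville rescalings and under symplectomorphisms, it suffices to prove the vanishing for this rescaled $K$. Moreover $K$ being disjoint from $\RR_+\times E_S$ means, after this rescaling, that the image of $K$ under the projection $(1-\delta,1+\delta)\times C\to C$ avoids a neighborhood of $E_S$; equivalently $K$ is contained in $(1-\delta,1+\delta)\times (C\setminus N)$ for some open neighborhood $N\supset E_S$.

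\textbf{Step 2: build the cover.} Recall $E_S=p^{-1}(\text{anti-diagonal})$ where $p:C\to Q_2$ is the prequantization bundle, and the Gelfand--Zeitlin system on $T^*S^3$ of Lemma~\ref{l:Chekanov} has polytope $P=\{u_3\ge u_2\ge\abs{u_1}\}$ with the property that every fiber not lying over $(0,0,\lambda\ge 0)$ is Hamiltonian displaceable in $T^*S^3$. The locus $\{u_1=u_2=0\}$ of non-displaceable fibers projects precisely to the zero section $S$ together with $\RR_+\times(\text{anti-diagonal region})$; in particular the corresponding subset of $\{1\}\times C$ is exactly $E_S$. Hence $C\setminus N$ is covered by finitely many compact sets of the form $F^{-1}(\overline{U_i})$ where $F$ is the restriction of the Gelfand--Zeitlin map to $\{1\}\times C$, the $\overline{U_i}$ are small compact balls in the base missing the bad locus, and each $F^{-1}(\overline{U_i})$ is Hamiltonian displaceable in $T^*S^3$. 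Thickening in the $r$-direction, the sets $K_i:=(1-\delta,1+\delta)\times F^{-1}(\overline{U_i})$, intersected with a compact neighborhood, form a finite cover of $K$. This cover is Poisson commuting in the sense of Definition~\ref{d:weakly1}: on the symplectization the components of $F\circ p$ together with $r_\alpha$ Poisson commute (this is exactly the definition of a contact involutive map, Definition~\ref{d:contact Poisson}), so one can take the defining functions $f_{m,i}$ to be built from $r_\alpha$ and the pullbacks of affine functions cutting out the $U_i$.

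\textbf{Step 3: conclude.} Each $K_i$ is Hamiltonian displaceable in $M=T^*S^3$: it is a Liouville rescaling of $F^{-1}(\overline{U_i})\subset\{1\}\times C$, which is displaceable by Lemma~\ref{l:Chekanov}(4), and one pushes the thin $r$-collar along with it. By property~(1) of symplectic cohomology with support, $SH^*_M(K_i)=0$ for each $i$. Applying Theorem~\ref{t:main rel} to the Poisson commuting collection $K_1,\dots,K_r$ with union containing $K$, and then the unitality of the restriction map $SH^*_M(\bigcup K_i)\to SH^*_M(K)$, we get $SH^*_M(K)=0$.

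\textbf{Main obstacle.} The delicate point is Step~1--Step~2: verifying that ``disjoint from $S\cup(\RR_+\times E_S)$'' really does imply $K$ can be displaced into the region where the Gelfand--Zeitlin cover has only displaceable pieces, and that the resulting cover genuinely satisfies the Poisson-commuting conditions (2)--(4) of Definition~\ref{d:weakly1} near the boundary of the base polytope, where the Gelfand--Zeitlin map is only continuous and the fibers are singular strata. One must choose the $U_i$ and the smoothing of the defining functions carefully so that conditions (3) and (4) hold on the overlaps; this is where the argument requires genuine care rather than formal manipulation, though the local nature of the probe argument in Lemma~\ref{l:Chekanov} makes it tractable.
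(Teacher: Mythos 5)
Your overall strategy -- project away from the rigid locus, cover by displaceable Poisson-commuting pieces, and run Mayer--Vietoris plus unitality of restriction -- is the same as the paper's, which also directly invokes the Mayer--Vietoris principle and does not route through Corollary~\ref{c:boundary vanishing}. However, two points in your argument are genuinely off.

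\textbf{Step~1 is incorrect.} The Liouville flow on $\RR_+\times C$ is $(r,x)\mapsto(e^t r,x)$, which sends an annulus $[a,b]\times C$ to $[e^t a,e^t b]\times C$; its radial width $e^t(b-a)$ only grows, never shrinks. There is no rescaling (Liouville or otherwise) taking an arbitrary compact $K\subset(0,\infty)\times C$ into $(1-\delta,1+\delta)\times C$ for small $\delta$, and in any case the Liouville flow is only a \emph{conformal} symplectomorphism. Fortunately the step is unnecessary: just take $0<a<b$ with $K\subset[a,b]\times C$ and thicken each piece to $[a,b]\times p^{-1}(B_i)$ (resp.\ use $\Pi^{-1}(\overline{U_i})$ for the Gelfand--Zeitlin map $\Pi$ on $T^*S^3$), exactly as in the paper's proof. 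The Poisson-commuting structure comes from $r_\alpha$ together with the lifted moment-map components, as in Section~\ref{sec:conformal}.

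\textbf{The two cases are not ``identical.''} For $E_S$ you are right to use the Gelfand--Zeitlin system on $T^*S^3$ and Lemma~\ref{l:Chekanov}(4), since the anti-diagonal is \emph{not} a stem in $S^2\times S^2$ -- the whole blue segment (including the Chekanov torus) is non-displaceable there, so a cover of $p\circ\pi(K)$ by displaceable sets in $S^2\times S^2$ does not exist. The paper handles this exactly via Lemma~\ref{l:Chekanov}. But for $E_T$ the natural argument is different: project to $S^2\times S^2$ and use the \emph{product toric} fibration, for which the Clifford torus is a stem, so all other fibers are displaceable already downstairs. Your proposal needs to say which fibration is used in each case, since the Clifford torus is not a fiber of the Gelfand--Zeitlin triangle over the interior dot (that is the Chekanov torus). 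On the ``main obstacle'' you raise about singular strata of the Gelfand--Zeitlin map: this is real, but the paper's Lemma~\ref{l:Chekanov}(4) already resolves it, because McDuff's probe argument is local and the probes used live entirely in the smooth toric region; you do not need a separate verification.

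With Step~1 replaced by the straightforward thick-block cover and the two fibrations cited in the appropriate cases, your proof matches the paper's.
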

\begin{proof}
    Let $\pi:\RR_+\times C\to C$ be the projection and $p:C\to S^2\times S^2$ be the prequantization map. For a compact subset $K$ of $T^*S^3$ which is disjoint from $S\cup(\RR_+\times E_T)$, we have $p\circ\pi(K)$ disjoint from the Clifford torus. The Clifford torus is a stem in the product toric fibration of $S^2\times S^2$, meaning that all other fibers are displaceable \cite[Theorem 1.1]{McDuff}. Therefore $p\circ\pi(K)$ is covered by finitely many Poisson commuting sets $B_i$ in $S^2\times S^2$, with each $B_i$ displaceable. Then the sets $[a_i,b_i]\times p^{-1}(B_i)$ are displaceable and Poisson commuting in $T^*S^3$, see Section \ref{sec:conformal}. In fact, the sets $[a_i,b_i]\times p^{-1}(B_i)$ are displaceable in the complement of the zero section, which can be identified with the symplectization $\mathbb{R}_+\times C$. Choosing $a_i$ small and $b_i$ large, the sets $[a_i,b_i]\times p^{-1}(B_i)$ cover $K$ and we can apply Mayer-Vietoris. We finish with the familiar unitality of restriction maps argument \cite{TVar}, whose details we omit. The same proof works for $S\cup(\RR_+\times E_S)$, with the help of Lemma \ref{l:Chekanov}.
\end{proof}

A closed symplectic manifold $M$ usually admits some skeleton which intersects all Floer-essential Lagrangians, see \cite[Corollary 1.25]{TVar} and \cite[Theorem D]{BSV}. In parallel languages, there is the notion of superheavy set \cite{EP09}, which intersects all heavy sets. The above set $S\cup (\RR_+\times E)$ serves as a non-compact analogue when $M$ is Liouville.

As mentioned before, there is an integrable system on $Q_n$ for any dimension $n$. We expect that a similar computation of probes as in Lemma \ref{l:Chekanov} would recover \cite[Theorem 1.2]{AD} in all dimensions. Moreover, by using a covering trick, all these results should have analogues for $T^*\RR P^n$.

\subsection{Contact non-squeezing}

Another way of looking at contact non-displaceability is via \emph{contact non-squeezing}. The notion of contact non-squeezing was introduced by Eliashberg-Kim-Polterovich \cite{EKP} although the first instance of a (genuine) contact non-squeezing was proven in an earlier paper by Eliashberg \cite{E}. Here is the definition of contact non-squeezing.
\begin{definition}
    Let $(C,\xi)$ be a contact manifold. Then, a subset $\Omega_1\subset C$ can be contactly squeezed inside a subset $\Omega_2\subset C$ if there exists a compactly supported contact isotopy $\phi_t:C\to C, t\in[0,1]$ such that $\phi_0=\operatorname{id}$ and such that $\phi_1(\bar{\Omega}_1)\subset \Omega_2$.
\end{definition}

Now, contact displaceability can be rephrased as follows: a compact subset $K$ of a contact manifold $(C,\xi)$ can be contactly displaced if, and only if, $K$ can be contactly squeezed into its complement $C\setminus K$. By applying Corollary~\ref{c:contact big fiber} to the case of an involutive map consisting of a single contact Hamiltonian, we obtain the following corollary.

\begin{corollary}\label{c:nonsq}
    Let $(C,\xi)$ be a closed contact manifold that is fillable by a Liouville domain with non-zero symplectic cohomology. Let $h:C\to\RR$ be a contact Hamiltonian that is invariant under the Reeb flow with respect to some contact form. Then, one of the sets $\{h\geqslant 0\}$ and $\{h\leqslant0\}$ cannot be contactly squeezed into its complement.
\end{corollary}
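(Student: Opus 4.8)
The plan is to derive Corollary~\ref{c:nonsq} from Corollary~\ref{c:contact big fiber} by reducing contact squeezing to contact displaceability and then applying the big fiber theorem to a suitably chosen single-component contact involutive map. First I would record the elementary reformulation, already noted in the text: for a compact subset $K$ of a closed contact manifold, $K$ can be contactly squeezed into $C\setminus K$ if and only if $K$ is contact displaceable (if $\phi_1(\bar K)\subset C\setminus K$ then $\phi_1$ displaces $K$ from itself; conversely a displacing isotopy is in particular a squeezing of $K$ into its complement). So it suffices to show that at least one of $\{h\geqslant 0\}$ and $\{h\leqslant 0\}$ is not contact displaceable.

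Next I would set up the involutive map. Let $\alpha$ be a contact form with respect to which $h$ is invariant under the Reeb flow, and consider the contact involutive map $G=(h)\colon C\to\RR$, which is a one-component contact involutive map witnessed by $\alpha$ because $h$ trivially Jacobi commutes with itself and is Reeb-invariant. By Corollary~\ref{c:contact big fiber}, since $C$ is the ideal contact boundary of a finite type complete Liouville manifold $M$ (the completion of the given Liouville filling) with $SH^*(M)\neq 0$, the map $G$ has a fiber $h^{-1}(c)$ that is not contact displaceable. The subtle point is that the fiber $h^{-1}(c)$ is not literally one of the two sublevel/superlevel sets; I need to conclude that the non-displaceability of a single fiber forces non-displaceability of one of the two half-spaces. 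For this I would argue: $h^{-1}(c)\subset \{h\geqslant c\}$ and $h^{-1}(c)\subset \{h\leqslant c\}$, so if both $\{h\geqslant c\}$ and $\{h\leqslant c\}$ were contact displaceable, then in particular $h^{-1}(c)$ would be contact displaceable — contradiction. Hence one of $\{h\geqslant c\}$, $\{h\leqslant c\}$ is not contact displaceable. Finally, to get the sign normalization stated in the corollary (namely $\{h\geqslant 0\}$ and $\{h\leqslant 0\}$), I would simply apply the previous paragraph to the shifted contact Hamiltonian $h-c$, which is again Reeb-invariant with respect to $\alpha$ and differs from $h$ by a constant, so its zero-level half-spaces are exactly $\{h\geqslant c\}$ and $\{h\leqslant c\}$; renaming $h-c$ as $h$ gives the statement. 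Equivalently, one can observe that the big fiber theorem applied to $h$ directly produces \emph{some} non-displaceable half-space $\{h\geqslant c\}$ or $\{h\leqslant c\}$, and the corollary as stated is the special case $c=0$ after translation.

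The only real content beyond bookkeeping is the passage from Corollary~\ref{c:contact big fiber} (non-displaceable \emph{fiber}) to non-displaceable \emph{half-space}, and this is immediate from monotonicity of displaceability under inclusion: a subset of a contact displaceable set is contact displaceable. So I do not expect a genuine obstacle here; the main care needed is just to (i) invoke the correct hypotheses of Corollary~\ref{c:contact big fiber} (finite type complete Liouville filling, $SH^*\neq 0$) which are supplied by the assumption that $C$ is fillable by a Liouville domain with non-zero symplectic cohomology, and (ii) make sure the single contact Hamiltonian $h$ genuinely qualifies as a contact involutive map witnessed by $\alpha$, which it does since the Jacobi self-bracket of one function vanishes and Reeb-invariance is assumed. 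I would keep the write-up to a few lines, citing the reformulation of squeezing in terms of displaceability, applying Corollary~\ref{c:contact big fiber} to $G=(h-c)$ for an appropriate constant, and concluding by inclusion-monotonicity of contact displaceability.
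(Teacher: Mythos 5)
Your plan is in the right spirit --- reduce squeezing to displaceability, apply Corollary~\ref{c:contact big fiber} to the one-component involutive map $h$ to get a non-displaceable fiber $h^{-1}(c)$, and use inclusion-monotonicity of displaceability --- but the step where you normalize $c$ to $0$ by a translation is wrong. After observing that $h^{-1}(c)$ sits inside both $\{h\geqslant c\}$ and $\{h\leqslant c\}$, you propose replacing $h$ by $h-c$ and ``renaming'' to conclude for the original $\{h\geqslant 0\}$, $\{h\leqslant 0\}$. This does not work: shifting $h$ by a constant changes the contact Hamiltonian, its contact vector field, and hence the decomposition $\{h\geqslant 0\}\cup\{h\leqslant 0\}$ that the corollary is actually about (cf.\ the Remark following the corollary, which points out the decomposition depends on the contact vector field, and $V_{h+c}=V_h+c\,R_\alpha$). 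The statement is genuinely about the sign of $h$, not translation-invariant, so ``the corollary as stated is the special case $c=0$ after translation'' is false.

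The fix is simpler than your detour and is what the paper does: since $c$ is either $\geqslant 0$ or $\leqslant 0$, the non-displaceable fiber $h^{-1}(c)$ is already contained in $\{h\geqslant 0\}$ (if $c\geqslant 0$) or in $\{h\leqslant 0\}$ (if $c\leqslant 0$). Inclusion-monotonicity --- which you correctly isolate as the only real content --- then says the containing set is not contact displaceable, hence cannot be contactly squeezed into its complement. In short: you should apply monotonicity to $h^{-1}(c)\subset\{h\geqslant 0\}$ or $h^{-1}(c)\subset\{h\leqslant 0\}$ directly, determined by the sign of $c$, rather than to $\{h\geqslant c\}$, $\{h\leqslant c\}$ followed by an invalid reparametrization.
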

\begin{proof} Corollary~\ref{c:contact big fiber} implies that $h$ has a contactly non-displaceable fiber. This fiber belongs either to $\{h\geqslant 0\} $ or to $\{{h}\leqslant 0 \}$. This finishes the proof.
\end{proof}

\begin{remark}
    We note that the decomposition of the contact manifold as the union of $\{h\geqslant 0\}$ and $\{h\leqslant0\}$ depends only on the contact vector field of $h$. 
\end{remark}

Combined with prequantizations, a simple example can be obtained.

\begin{example}
    Let $p:(C,\xi,\alpha)\to D$ be a prequantization bundle. Let $F:D\to \RR$ be a function such that $\{F\leq 0\}$ is displaceable in $D$. If $C$ admits a Liouville filling with non-zero symplectic cohomology or satisfies Theorem \ref{thm-prequantum}, then $\{F\circ p\geq 0\}\subset C$ cannot be contactly squeezed into its complement.
\end{example}

Particularly interesting is the case of Corollary~\ref{c:nonsq} where $0$ is a regular value of $h$ and where $\{h\geqslant 0\}$ and $\{h\leqslant0\}$ are contact isotopic. In this case, $\{h> 0\}$ is an example of a set that can be smoothly squeezed into itself but not contactly. 

\begin{example}
    Let $X$ be a smooth vector field on a closed smooth manifold $P$ such that $X$ gives rise to an $\mathbb{S}^1$ action on $P$. Let $h:S^\ast P\to\RR$ be the contact Hamiltonian on the unit cotangent bundle defined by $h(v^\ast):=v^\ast(X\circ\pi)$ where $\pi:T^\ast P\to P$ is the canonical projection. Then, the set $\{h\geqslant 0\}$ is contact non-displaceable. In other words, the set $\{h\geqslant 0\}$ cannot be contactly squeezed into its complement.
\end{example}
\begin{proof}
    The contact Hamiltonian $h$ generates contact circle action $\varphi:S^\ast P\to S^\ast P$ that is the `lift' of the given circle action on $P$. By averaging (see \cite[Proposition~2.8]{L} and \cite[Lemma~3.4]{DU}), we can prove that there exists a contact form $\alpha$ on $S^\ast P$ such that $\varphi_t^\ast\alpha=\alpha$ for all $t$. Denote by $f:S^\ast P\to\RR$ the contact Hamiltonian of $\varphi_t$ with respect to $\alpha.$ Notice that $f$ is Reeb invariant with respect to $\alpha$ and that the sets $\{h\geqslant0\}$ and $\{f\geqslant 0\}$ are equal. Since $SH^*(T^\ast P, \mathbb{Z}_2)\not=0$, Corollary ~\ref{c:nonsq} implies that one of the sets $\{h\geqslant 0\}=\{f\geqslant 0\}$ and $\{h\leqslant 0\}=\{f\leqslant 0\}$ cannot be contactly squeezed into its complement. Let $a:S^\ast P\to S^\ast P$ be the map given by $a(v^\ast)=-v^
    \ast$. The map is a contactomorphism (not preserving the coorientation) that satisfies $h\circ a=-h$. As a consequence (of the existence of $a$), the set $\{h\geqslant 0\}$ can be contactly squeezed into its complement if, and only if, the same is true of the set $\{h\leqslant0\}$. Therefore, neither of the sets $\{h\geqslant 0\}$ or $\{h\leqslant 0\}$ can be contactly squeezed into its complement. This finishes the proof.
\end{proof}

\section{Proofs}\label{sec:proofs}
In this section we prove Theorem \ref{thm-rfh-sh-vanish}, Theorem \ref{t:conformal} and Theorem \ref{thm-prequantum}. They are all consequences of the Mayer-Vietoris principle for the symplectic cohomology with support, generalized to the corresponding settings. Since its introduction in \cite{Var21}, symplectic cohomology with support has been further developed in \cite{TVar,BSV,GV2,AGV}. Now we briefly recall its definition to fix the notation. The version that we use in this article is closest to the one in \cite{BSV}.
\subsection{The definition of symplectic cohomology with support}\label{sec:def-sh}

Let $M$ be a symplectic manifold that is convex at infinity. We fix a decomposition 
$$
M:= \bar{M}\cup_{\partial\bar{M}} ([1,+\infty)\times \partial\bar{M})
$$
where $\bar{M}$ is a compact symplectic manifold with contact boundary and $[1,+\infty)\times \partial\bar{M}$ is a cylindrical end with a Liouville coordinate. 

Let $A$ be the quotient of the image of $\pi_2(M)\to H_2(M)$ by the subgroup of classes $a$ such that $\omega(a)=0$ and $2c_1(TM)(a)=0$. We define $\Lambda$ be the degree-wise completion of the $\mathbb{Z}$-graded and valued group ring $\Bbbk[A]$ where $e^{a}$ has valuation $\omega(a)$ and grading $2c_1(TM)(a)$. We do not make any restrictions on $\Bbbk$ if $c_1(TM)$ and $\omega$ are proportional to each other with non-negative constants when restricted to $\pi_2(M)$. For a general symplectic manifold where virtual techniques are needed to define Hamiltonian Floer cohomology, we require that $\Bbbk$ contains $\QQ$.

Let $\gamma:S^1 \to M$ be a nullhomotopic loop in $M$. 
A \emph{cap} for $\gamma$ is an equivalence class of disks $u:\mathbb{D} \to M$ bounding $\gamma$, 
where $u \sim u'$ if and only if the Chern number and the symplectic area of the spherical class $[u-u']$ vanishes. 
The set of caps for $\gamma$ is a torsor for $A$.

Given a non-degenerate Hamiltonian $H: S^1 \times M \to \mathbb{R}$. For a capped orbit $\tilde{\gamma} = (\gamma, u)$ we have a $\mathbb{Z}$-grading 
and an action
$$
	i(\gamma, u) = \text{CZ}(\gamma, u) + \frac{\dim(M)}{2} \quad\mbox{and}\quad
	\mathcal{A}(\gamma, u) := \int_{S^1} H(t, \gamma(t))\,dt + \int u^*\omega, 
$$
and these are compatible with the action of $A$ in that
$$
	i(a \cdot (\gamma, u)) = i(\gamma, u) + 2c_1(TM)(a)
	\qquad\text{and}\qquad
	\mathcal{A}(a \cdot (\gamma, u)) = \mathcal{A}(\gamma, u) + \omega(a)\,.
$$

Since $M$ is non-compact, we need extra conditions on the Hamiltonians and almost complex structures to control Floer solutions. Here let us use the most common choice: Hamiltonians are linear near infinity (up to a constant) and almost complex structures that are of contact type near infinity for our fixed choice of convex cylindrical end, see \cite[Section 3.3]{Se}.

Define $CF^*(H)$ to be the free $\mathbb{Z}$-graded $\Bbbk$-module generated by the capped orbits.
It is naturally a graded $\Lambda$-module, via $e^a \cdot (\gamma,u) := a \cdot (\gamma,u)$. It also admits a Floer differential after the choice of a generic $S^1$-family of $\omega$-compatible almost complex structures (which we suppress from the notation). The differential is $\Lambda$-linear, increases the grading by $1$, does not decrease action, and squares to zero.
One can also define continuation maps $CF^*(H_0) \to CF^*(H_1)$ in the standard way. If the continuation maps are defined using monotone Floer data, then the continuation map $CF^*(M, H_0) \to CF^*(M, H_1)$ does not decrease action.

For a compact subset $K$ of $M$, one can use the following acceleration data to compute the symplectic cohomology with support on $K$:
\begin{itemize}
    \item A sequence of non-degenerate Hamiltonian functions $H_n$ which monotonically approximate from below $\chi_K^\infty$, the lower semi-continuous function which is zero on $K$ and positive infinity outside $K$.
    \item A monotone homotopy of Hamiltonian functions connecting $H_n$'s.
    \item A suitable family of almost complex structures.
\end{itemize}

Given the above acceleration data, 
we define the chain complex
$$
tel^*(\mathcal{C}):=\bigoplus_{n=1}^\infty(CF^*(H_{n})\oplus CF^*(H_{n})[1])
$$ 
by using the telescope construction. The differential $\delta$ is defined as follows, if $x_{n}\in CF^k(H_{n})$ then
$$
\delta x_{n}= d_{n}x_{n} \in CF^{k+1}(H_{n}), 
$$
and if $x'_{n}\in CF^k(H_{n})[1]$ then

\begin{align}\label{eq:delta}
	\delta x'_{n}&= (x'_{n}, -d_{n}x'_{n}, -h_{n}x'_{n})\\
	&\in CF^{k}(H_{n})\oplus CF^{k+1}(H_{n})[1]\oplus CF^k(H_{n+1}).
\end{align}
Here $d_n: CF^k(H_n)\to CF^{k+1}(H_n)$ is the Floer differential and $h_n: CF^k(H_n)\to CF^{k}(H_{n+1})$ is the continuation map. Then we take a degree-wise completion in the following way. The Floer complex $CF^*(H)$ is equipped with an action filtration. Every element in $tel^*(\mathcal{C})$ is a finite sum of elements from $CF^*(H_n)$'s. We define the action of such a sum as the smallest action among its summand, and call it the $\min$-action on $tel^*(\mathcal{C})$. For any number $a\in [-\infty, \infty)$, we have subcomplexes $CF^*_{\geq a}(H_n)$ and $tel^*(\mathcal{C})_{\geq a}$ containing elements with action greater or equal to $a$. For $a<b$ we form the quotients
$$
CF^*_{[a,b)}(H_n):=CF^*_{\geq a}(H_n)/CF^*_{\geq b}(H_n), \quad tel^*(\mathcal{C})_{[a,b)}:= tel(\mathcal{C})_{\geq a}/tel(\mathcal{C})_{\geq b}.
$$
The degree-wise completion of the telescope is defined as
$$
\widehat{tel^k}(\mathcal{C}):= \varprojlim_b tel^k(\mathcal{C})_{(-\infty,b)}
$$
as $b\to +\infty$. The symplectic cohomology with support on $K$ can be computed as the homology of the completed telescope 
\begin{equation}\label{eq:defSH}
    SH^*_M(K)=H(\widehat{tel^*}(\mathcal{C})).
\end{equation} 
Using the monotonicity techniques developed by Groman, we can show that different choices of acceleration data give isomorphic homology (see \cite[Proposition 6.5]{GV} or \cite[Theorem 4.17]{GV2}). In fact, we can also show independence on the choice of our convex cylindrical end.

On the other hand, if we add the extra condition on the acceleration data that the slopes of $H_n$ go to infinity as $n$ goes to infinity (with respect to the fixed convex cylindrical end), then we can reach the same conclusion relying entirely on standard maximum principle arguments. In particular, the sandwich argument \cite[Proposition 3.3.3]{Var21} continues to work with the standard maximum principle, provided that all Hamiltonians and almost complex structures are chosen as described. The latter is sufficient for our purposes except at one small point (that is not actually used in the main body of the paper). In the Appendix, to compare our restriction maps with those of Viterbo, we will use Hamiltonians that are $C^2$ small with irrational slope at infinity. This is also for convenience only and could be avoided.

In the next section we prove Theorem \ref{t:conformal} using this definition of symplectic cohomology with support. In Sections \ref{sec-pf-rfh-sh} and \ref{sec-quantum-proof}, we will extend the definition to other cases of $K\subset M$ to prove the desired results.

\subsection{Proof of Theorem \ref{t:conformal}}\label{sec:conformal}
Let $(C,\xi)$ be a closed contact manifold with a contact form $\alpha$. A strong symplectic filling $(\bar{M},\omega)$ of $(C,\xi,\alpha)$ gives us 
$
M:= \bar{M}\cup_{\partial\bar{M}} ([1,+\infty)\times \partial\bar{M}),
$ as in the previous section.
Now we recall the relation between contact displacement in $C$ and Hamiltonian displacement in the symplectization $S(C)=((0,+\infty)\times C, d(r\alpha))$. We write $R_\alpha$ as the Reeb vector field of $\alpha$. For a time-dependent function $h_t$ on $C$ there is a unique contact Hamiltonian vector field $V_{h_t}$ defined by
$$
\alpha(V_{h_t})=h_t, \quad d\alpha(V_{h_t},\cdot)= dh_t(R_\alpha)\alpha- dh_t.
$$
Consider the induced function $H=r h_t+c$ on the symplectization, where $c\in\mathbb{R}$ is a constant. The contact Hamiltonian vector field $V_h$ and the (symplectic) Hamiltonian vector field $X_H$ is related as
$$
X_H= -V_{h_t} + dh_t(R_\alpha)\cdot r\partial_{r}.
$$
Our convention for the Hamiltonian vector field is $dH= \omega(X_H, \cdot)$. Therefore if a compact subset $K$ of $C$ is contact displaceable, then the infinite block $\RR_+\times K$ is (Hamiltonian) displaceable in $S(C)$. Particularly, any finite block $[a,b]\times K$ is displaceable by a Hamiltonian compactly supported in $(1,+\infty)\times C$ when $a$ is large\footnote{Notice that we do not have freedom to choose $a$ arbitrarily small (as in the proof of Corollary~\ref{c:EtEc}) because the filling is not necessarily exact in Theorem~\ref{t:conformal}.}. The Hamiltonain isotopies that are used to displace $[a,b]\times K$ rescale the $\RR_+$ coordinate in the symplectization $\RR_+\times C$ (cf. Lemma~\ref{l:conformal}). Hence, large enough $a$ is required here to make sure the image of $[a,b]\times K$ under these Hamiltonian isotopies is always contained in $(1, +\infty)\times C$. Furthermore, a stronger displacement result holds.

\begin{lemma}\label{l:conformal}
    Fix a contact form $\alpha$ and the Liouville coordinate $r_\alpha$ on $S(C)$. Let $\phi_t$ be the flow of the contact Hamiltonian vector field $V_h$ for some $h$. Let $\Phi_t$ be the flow of the (symplectic) Hamiltonian vector field $X_H$ for $H=r_\alpha h+c$. Then for any $p\in\{r_\alpha=r_0\}\subset S(C)$, if $c_0:=\frac{r_\alpha(\Phi_1(p))}{r_0}$, we have $(\phi_1^*\alpha)_p=c_0\alpha_p$. 
\end{lemma}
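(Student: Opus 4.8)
The plan is to prove the stronger and cleaner statement that the symplectic flow $\Phi_t$ preserves the Liouville form $\lambda := r_\alpha\,\alpha$ on $S(C)$, and then to read off the conformal factor by restricting $\lambda$ to the hypersurface $\Phi_1\bigl(\{r_\alpha=1\}\bigr)$.

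First I would compute $\iota_{X_H}\lambda$. Since $\alpha$ annihilates $\partial_{r_\alpha}$ and $\alpha(V_h)=h$, the formula $X_H = -V_h + dh(R_\alpha)\,r_\alpha\partial_{r_\alpha}$ gives $\alpha(X_H)=-h$, hence
$$
\iota_{X_H}\lambda \;=\; r_\alpha\,\alpha(X_H) \;=\; -r_\alpha h \;=\; c - H.
$$
By Cartan's formula, using $\iota_{X_H}\,d\lambda = \iota_{X_H}\omega = dH$,
$$
\mathcal{L}_{X_H}\lambda \;=\; d(\iota_{X_H}\lambda) + \iota_{X_H}\,d\lambda \;=\; d(c-H) + dH \;=\; 0,
$$
and the same identity holds pointwise in $t$ for a time-dependent $h_t$. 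Thus $\Phi_t^*\lambda = \lambda$ for every $t$: the symplectic flow is a strict exact symplectomorphism. (Equivalently: the constant $c$ does not affect $X_H$, and $X_{r_\alpha h}$ is invariant under the Liouville scaling, so $\Phi_t$ commutes with the Liouville flow.)

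Since $\Phi_1$ commutes with the Liouville flow, it sends each Liouville ray to a Liouville ray; hence $\Phi_1\bigl(\{r_\alpha=1\}\bigr)$ is again a section of the ray foliation, i.e.\ a graph $\Gamma = \{r_\alpha = \rho(\cdot)\}$ over $C$ for some smooth $\rho\colon C\to(0,\infty)$. Pulling the Liouville form back along $C \cong \Gamma \hookrightarrow S(C)$ and using $\alpha(\partial_{r_\alpha})=0$ gives $\lambda|_\Gamma = \rho\cdot\alpha$, while $\lambda|_{\{r_\alpha=1\}} = \alpha$. Moreover $\pi\circ\Phi_t|_{\{r_\alpha=1\}}$ is the integral flow of $d\pi(X_H) = -V_h$, so after the identifications $\{r_\alpha=1\}\cong C\cong\Gamma$ (by $\pi$) the map $\Phi_1$ becomes the contact isotopy $\phi_1$ generated by $V_h$ at time $1$. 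Feeding this into $\Phi_1^*\lambda = \lambda$ yields $\phi_1^*(\rho\cdot\alpha) = \alpha$; evaluating at $p$ and noting $\rho\bigl(\pi(\Phi_1(p))\bigr) = r_\alpha(\Phi_1(p)) = c_0$ gives $(\phi_1^*\alpha)_p = c_0\,\alpha_p$.

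The one step requiring genuine care is the identification in the last paragraph: one must check that $\pi\circ\Phi_1|_{\{r_\alpha=1\}}$ is precisely the time-$1$ contact flow $\phi_1$ (as opposed to its inverse) and that the factor that drops out is $c_0 = r_\alpha(\Phi_1(p))$ itself, evaluated at the correct point — exactly the kind of sign bookkeeping that requires the conventions for $V_h$, for $X_H$, and for the identification $\{r_\alpha=1\}\cong C$ to be pinned down. Everything else is a two‑line Cartan computation, so this bookkeeping is the only real obstacle.
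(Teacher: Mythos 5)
Your approach is genuinely different from the paper's, and the starting observation is both correct and slicker: with $H=r_\alpha h+c$ and the paper's conventions one has $\iota_{X_H}\lambda=r_\alpha\alpha(X_H)=-r_\alpha h=c-H$, so Cartan's formula gives $\mathcal{L}_{X_H}\lambda=0$ and hence $\Phi_t^*\lambda=\lambda$; this forces $\Phi_1$ to commute with the Liouville flow and to carry $\{r_\alpha=1\}$ to a graph $\Gamma=\{r_\alpha=\rho\}$, and the conformal factor should then drop out of $\lambda|_\Gamma=\rho\,\alpha$. The paper instead integrates two ODEs directly --- one for $\log r_\alpha(\Phi_t(p))$ using the $\partial_{r_\alpha}$--component of $X_H$, the other for $(\phi_t^*\alpha)_p$ using $\mathcal{L}_{V_h}\alpha=dh(R_\alpha)\alpha$ --- and matches the exponents. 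Both routes are legitimate in outline, and yours isolates the conceptual content (exactness of the lift) cleanly.

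However, the ``bookkeeping'' you flag at the end is not merely a check: as written your last step gives the wrong answer. Since $\pi_*X_H=-V_h$, the projected isotopy is $\phi_{-t}$, not $\phi_t$; and even granting $\pi\circ\Phi_1|_{\{r_\alpha=1\}}=\phi_1$ for the sake of argument, the identity $\phi_1^*(\rho\alpha)=\alpha$ unpacks to $(\phi_1^*\alpha)_p=\alpha_p/\rho(\phi_1(p))$, i.e.\ $(1/c_0)\,\alpha_p$ rather than $c_0\,\alpha_p$. Carrying out the pullback bookkeeping correctly (with $\psi:=\pi\circ\Phi_1\circ i_1=\phi_{-1}$, $i_1(x)=(x,1)$, $j(x)=(x,\rho(x))$, and $\Phi_1\circ i_1=j\circ\psi$) yields $\phi_{-1}^*(\rho\alpha)=\alpha$, hence $(\phi_1^*\alpha)_{\phi_1^{-1}(p)}=c_0\,\alpha_{\phi_1^{-1}(p)}$. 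In other words the conformal factor $c_0=r_\alpha(\Phi_1(p))$ is attained at $\phi_1^{-1}(p)$, not at $p$. You are in good company: the paper's own proof makes the same slip in the corresponding step, asserting $r_\alpha(\Phi_1(p))=\exp\bigl(\int_0^1 dh(R_\alpha)(\phi_t(p))\,dt\bigr)$ where the integrand should be evaluated along $\phi_{-t}(p)$. The discrepancy is a relabeling of the base point and does not affect Theorem~\ref{t:conformal}, whose conclusion is existential: if $p,\Phi_1(p)\in K\times\{1\}$ then $c_0=1$ and the point $q:=\phi_1^{-1}(p)\in K$ has conformal factor $1$ with $\phi_1(q)=p\in K$. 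But to count your proposal as a proof of the lemma as stated, you would need either to carry out this bookkeeping and discover that the statement must be read with the shifted base point, or to re-derive the lemma in that corrected form.
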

\begin{proof}
    Since the flow $\Phi_t$ is $\RR_+$ equivariant, the number $c_0$ (actually, a function with variable $p$) does not change as $p$ moves along a (fixed) Liouville trajectory. Therefore (because each Liouville trajectory intersects the set $\{r_\alpha=1\}$), it is enough to prove the lemma for $r_0=1$.
    The component of $X_H$ in the Liouville direction is $dh(R_\alpha)r_\alpha$. Therefore
    $$
    r_\alpha(\Phi_1(p))= \exp\bigg(\int_0^1dh(R_\alpha)(\phi_t(p))dt\bigg).
    $$
    On the other hand, we can compute that
    $$
    \dfrac{d}{dt}(\phi_t^*\alpha)=\phi_t^*(d(\alpha(V_h))+ d\alpha(V_h,\cdot))=\phi_t^*(dh(R_\alpha)\alpha)
    $$
    which gives
    $$
    (\phi_1^*\alpha)_p= \alpha_p\cdot\exp\bigg(\int_0^1dh(R_\alpha)(\phi_t(p))dt\bigg).
    $$
\end{proof}

By this lemma, we know that if a compact subset $K\subset \{r_\alpha=1\}\subset S(C)$ is nondisplaceable from itself under $\Phi_1$ in $S(C)$, then there is a point $p\in K$ such that $\phi_1(p)\in K$ and $p$ has conformal factor one. The following lemma is also used in the proof of Theorem~\ref{t:conformal}.

\begin{lemma}\label{l:cont2symp}
Let $M$ be the completion of a strong filling of a contact manifold $C$. Let $\alpha$ be a contact form on $C$ and $((\varepsilon,+\infty)\times C, d(r\alpha))$ a convex end of $M$. Let $G:C\to\RR^N$ be a contact involutive map witnessed by $\alpha$. Then, the sets $[a_1,b_1]\times G^{-1}(K_1)$ and $[a_2,b_2]\times G^{-1}(K_2)$ Poisson commute in $M$ for all $[a_1,b_1], [a_2,b_2]\subset(\varepsilon, +\infty)$ and compact $K_1, K_2\subset \RR^N.$
\end{lemma}
\begin{proof}
The lemma essentially follows from Definition~\ref{d:contact Poisson}. Let $\mu:(\varepsilon, +\infty)\to [0,+\infty)$ be a smooth non-decreasing function such that $\mu(r)=0$ for $r$ near $\varepsilon$ and $\mu(r)=1$ for $r\geqslant \min(a_1,a_2)$. Since $G$ is a contact involutive map witnessed by $\alpha$, the map
\[ F:M\to \RR^{N+1} \quad:\quad p\mapsto \left\{ \begin{matrix}(r\cdot \mu(r), G(x)\cdot \mu(r))& \text{if }p=(r,x)\in (\varepsilon, +\infty)\times C\\ 0 &\text{otherwise}\end{matrix}\right. \]
is (symplectic) involutive in $M$. In addition, we have  $[a_i,b_i]\times G^{-1}(K_i)= F^{-1}([a_i,b_i]\times K_i)$. A special case is where $a_i=b_i$. 
\end{proof}

\begin{proof}
    [Proof of Theorem \ref{t:conformal}]
    Let $G:C\to \RR^N$ be a contact involutive map, with every fiber being contact displaceable. Since $C$ is compact, there is a finite open cover $\{U_i\}$ of $G(C)$ in $\RR^N$ such that each $G^{-1}(\bar{U}_i)$ is contact displaceable in $C$. We can view these sets as subsets of $\{r_i\}\times \partial\bar{M}$. For $r_i$ large enough, they are Hamiltonian displaceable in $(1, +\infty)\times \partial\bar{M}$, and therefore also in $M$. (Since the filling $\bar{M}$ is not necessarily exact, we make sure that contact isotopies that displace the sets $\{r_i\}\times G^{-1}(\bar{U}_i) $ have supports in $(1, +\infty)\times \partial\bar{M}$. This is the reason we require $r_i$ large enough.) Denote $r_M:=\max\{r_i\}$. Then, the sets $\{r_M\}\times G^{-1}(\bar{U}_i)$ form a Poisson commuting cover of $\{{r_M}\}\times\partial\bar{M}$ in $M$, by Definition \ref{d:contact Poisson} (see Lemma~\ref{l:cont2symp}). Therefore, the Mayer-Vietoris principle implies that $SH^*_M(\{r_M\}\times\partial\bar{M})=0$, which leads a contradiction. The claim about conformal factor follows from the Lemma~\ref{l:conformal}. Indeed, if for each fiber $K$ of $G$ there exists a contactomorphism $\phi:C\to C$ contact isotopic to the identity such that $(\phi^\ast\alpha)_p\not=\alpha_p$ for all $p\in K$ satisfying $\phi(p)\in K$, then $\{r\}\times K$ can be displaced in $M$ (but perhaps not contactly displaced in $\{r\}\times\partial\bar{M}$) for $r$ sufficiently large. Notice that for this part of the theorem (as opposed to the first part) the Liouville direction played a major role in displacing the fibers. Once we established that each fiber is displaceable in $M$, we can repeat the argument above and obtain a contradiction.
\end{proof}

\subsection{Proof of Theorem \ref{thm-rfh-sh-vanish}}\label{sec-pf-rfh-sh}

For a symplectic manifold $M$ with a fixed convex cylindrical end as above, write $K_M:=[1,+\infty)\times \partial\bar{M}$. We first define symplectic cohomology with support on $M$ or on $K_M$. Although these sets are non-compact, they are simple enough to extend the definition directly. 

Let $r$ be the Liouville coordinate on $K_M$ and $\alpha$ be the contact form on $\partial\bar{M}$. A Hamiltonian function $H$ on $M$ has a small negative slope at infinity if it is of the form $-\epsilon r$ outside a compact subset of $K_M \cup \bar{M}$, for some $\epsilon>0$ less than the minimal period of the Reeb orbits of the contact form $\alpha$. We define $SH^*_M(M)=H(\widehat{tel^*}(\mathcal{C}))$ where $\widehat{tel^*}(\mathcal{C})$ is the degree-wise completed telescope formed by Hamiltonian functions that are negative on $M$, i.e. everywhere, monotonically converging to zero on $M$, and have small negative slopes at infinity with slopes converging to zero. Similarly, we define $SH^*_M(K_M)$ using functions that are negative on $K_M$, converging to zero on $K_M$, diverging to positive infinity outside $K_M$, and have small negative slopes at infinity with slopes converging to zero.

Let $H^*_c(M;\Lambda)$ be the compactly supported cohomology of $M$, and let $H^*(M;\Lambda)$ be the cohomology of $M$. Both of them are $\Lambda$-modules, and can be equipped with a quantum product using genus zero three-pointed Gromov-Witten invariants \cite[Section 2.12]{Ritter2014}.

\begin{proposition}\label{p:compact support}
    There is a ring isomorphism between $SH^*_M(M)$ and $H^*_c(M;\Lambda)$ with respect to the quantum product.
\end{proposition}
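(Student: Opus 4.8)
The plan is to identify the completed telescope computing $SH^*_M(M)$ with a Morse-type complex computing $H^*_c(M;\Lambda)$, and to match the product structures. The starting point is that, by the very definition given above, $SH^*_M(M)$ is computed by an acceleration datum consisting of Hamiltonians $H_n$ that are everywhere negative on $M$, converge monotonically to $0$, and have small negative slopes $-\epsilon_n r$ at infinity with $\epsilon_n\to 0$, all slopes below the minimal Reeb period of $\alpha$. For such a Hamiltonian all $1$-periodic orbits are constants (there are no non-constant orbits in the interior since the Hamiltonian is $C^2$-small there, and none in the cylindrical end since the slope is below the minimal period and of the ``wrong sign''), so after a further small perturbation to a negative $C^2$-small Morse function the Floer complex $CF^*(H_n)$ is the Morse cochain complex of a Morse function that is proper, bounded above, and decreasing towards infinity. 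This Morse complex, with its Novikov-completed $\Lambda$-coefficients, computes $H^*_c(M;\Lambda)$: the cochain differential counts negative gradient trajectories, which flow \emph{towards} the lower sublevel sets, i.e. towards infinity, so that the associated filtration is by sublevel sets and the cohomology is the relative cohomology $H^*(M, M\setminus K;\Lambda)$ with $K$ compact large, i.e. $H^*_c(M;\Lambda)$ in the limit.

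First I would carry out this orbit/gradient analysis carefully, establishing that each $CF^*(H_n)$ is quasi-isomorphic to $C^*_{Morse}(M, f_n)\otimes\Lambda$ with $f_n$ proper and bounded above, and that the continuation maps $h_n: CF^*(H_n)\to CF^*(H_{n+1})$ induced by the monotone homotopy are, up to homotopy, the canonical identity-like comparison maps between these Morse complexes (they do not decrease action and become isomorphisms on homology at each finite stage once the slope exceeds $0$-threshold, which it trivially does). Second, I would invoke the standard fact that the homology of a telescope of a sequence of quasi-isomorphisms is the common homology; the degree-wise completion with respect to the $\min$-action filtration then produces exactly the Novikov completion $\Lambda$ on the coefficient side, giving $SH^*_M(M)\cong H^*_c(M;\Lambda)$ as $\Lambda$-modules. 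Here I would be careful that the action filtration on $CF^*(H_n)$, in the limit $\epsilon_n\to 0$, converges to the valuation filtration coming from $\omega$ on spherical classes together with the sublevel-set filtration of $f_n$; the completion is thus compatible with the Novikov completion defining $\Lambda$.

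Third, for the ring structure, I would use that the pair-of-pants product on Hamiltonian Floer cohomology, for Hamiltonians that are $C^2$-small Morse functions in the interior and have small slopes at infinity, reduces to the quantum product on $H^*_c$: the count of pair-of-pants Floer solutions asymptotic to constant orbits and weighted by spherical bubbles is precisely the genus-zero three-point Gromov--Witten count that defines the quantum product on compactly supported cohomology as in \cite{Ritter2014, Ritter}. One must check that this product is compatible with continuation maps and passes to the telescope and its completion; this is formal once the chain-level maps are set up, since the telescope differential is built from Floer differentials and continuation maps, both of which are module maps over the pair-of-pants product in the limit. Alternatively, one can cite that the TQFT structure on symplectic cohomology with support, established in \cite{TVar, BSV}, specializes on the ``full'' support $M$ to the quantum product on $H^*_c(M;\Lambda)$; I would state the argument in a way that reduces to the already-developed formalism rather than reproving the TQFT from scratch.

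\textbf{Main obstacle.} The delicate point is not the module isomorphism but the interaction of three things at once: the negative small slope at infinity (needed so that $SH^*_M(M)$ sees ``all of $M$'' and no Reeb dynamics), the direction of the Morse/Floer differential (which must be arranged so the limit is \emph{compactly supported} cohomology, not ordinary cohomology — this is exactly the sign subtlety that makes this statement nontrivial, cf.\ the role of $H^*_c$ versus $H^*$ in Theorem \ref{thm-rfh-sh-vanish}), and the completion, which must match the Novikov completion defining $\Lambda$ rather than introducing or losing any elements in the inverse limit. I expect the bulk of the work to be a clean bookkeeping of action values and filtrations showing that $\varprojlim_b tel^k(\mathcal{C})_{(-\infty,b)}$ is genuinely the Novikov completion of the Morse cohomology with no $\varprojlim^1$ obstruction, and that the quantum product survives the completion; the geometric input (no bad orbits, pair-of-pants $=$ Gromov--Witten) is standard and can be imported from \cite{Ritter2014, Ritter, TVar, BSV}.
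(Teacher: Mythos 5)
Your proposal is correct and follows essentially the same route as the paper's proof: both choose $C^2$-small negative Morse Hamiltonians with small negative slope at infinity so that all $1$-periodic orbits are constants, identify the resulting Floer complexes with Morse cochain complexes computing $H^*_c(M;\Lambda)$ (because the function is proper and bounded above), and invoke the PSS/Gromov--Witten comparison together with the product structure of \cite{TVar,AGV,BSV} to match the ring structures. The paper's proof is terser — it fixes a single Morse function $f$ and takes $f_n=f/n$ so the constant orbits and continuation maps are explicitly controlled, and cites \cite{PSS} directly — but the substance (including your flagged sign subtlety of negative slope forcing $H^*_c$ rather than $H^*$, and the care needed in matching the action completion to the Novikov completion) is the same.
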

\begin{proof}
    Pick a small negative Morse function $f$ on $M$ which is linear at infinity with a small negative slope. Then the sequence of functions $f_n:=f/n$ can be filled with monotone homotopy and made into a Floer one-ray $\mathcal{C}_f$ to compute $SH^*_M(M)=H(\widehat{tel^*}(\mathcal{C}_f))$. Following \cite{TVar,AGV}, one can equip $SH^*_M(M)$ with a product structure. It is isomorphic to the compactly supported cohomology $H^*_c(M)$ of $M$ equipped with the quantum product as a ring using the PSS method \cite{PSS}. 
\end{proof}

\begin{remark}
    Since $M$ is non-compact, one can choose $f$ with no index zero critical points. 
    \begin{enumerate}
        \item If $c_1(TM)=0$, then all our Floer complexes are supported between degree one and $\dim M$. Therefore, the product structure, which is degree compatible, is nilpotent.
        \item If $\omega$ is exact, the quantum product agrees with the usual product, which is nilpotent.
    \end{enumerate}
\end{remark}

In the definition of $SH^*_M(M)$, we use functions that have small negative slopes at infinity. If we use similar functions with small positive slope at infinity, we will get the usual cohomology $H^*(M)$. More precisely, consider Morse functions $g_n$ on $M$ such that 
\begin{enumerate}
    \item $g_n\leq g_{n+1}$ for any $n$.
    \item Each $g_n$ is a $C^2$-small Morse function on $\bar{M}$ and $\lim_n g_n(x)=0, \forall x\in \bar{M}$.
    \item Each $g_n$ is linear outside $\bar{M}$ with a small positive slope.
\end{enumerate}
Connect these functions with monotone homotopies to get a Floer one-ray $\mathcal{G}$. We equip $H(\widehat{tel^*}(\mathcal{G}))$ with a product structure which is isomorphic to $H^*(M)$ as a ring with respect to the quantum product. We choose $f_n<g_n$ for every $n$ and obtain a restriction map 
$$
r_M: H(\widehat{tel^*}(\mathcal{C}_f))\to H^*(\widehat{tel^*}(\mathcal{G}))
$$
which matches the map $H^*_c(M) \to H^*(M)$. By the functoriality of restriction maps we get the following.

\begin{proposition}\label{p:factor}
    For any compact subset $K$ of $M$, there exists a ring map $r_K: H^*(M;\Lambda)\to SH^*_M(K)$ such that $r_K\circ r_M$ equals the restriction map $SH^*_M(M)\to SH^*_M(K)$.
\end{proposition}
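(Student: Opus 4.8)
The plan is to construct $r_K$ as a restriction (continuation-type) map coming from a ladder of Hamiltonians, and to obtain the identity $r_K\circ r_M=(\text{restriction})$ from the functoriality of such maps. Recall that $r_M$ is, by construction, the map on homology induced by the ladder of inequalities $f_n<g_n$ relating the one-ray $\mathcal{C}_f$ (computing $SH^*_M(M)$) to the one-ray $\mathcal{G}$ (computing $H^*(M;\Lambda)$), while the restriction map $SH^*_M(M)\to SH^*_M(K)$ is by definition the map induced by a ladder $f_n\le H_n$, where $\mathcal{C}_K=\{H_n\}$ is acceleration data for $K$, i.e.\ $H_n\nearrow\chi_K^\infty$. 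So it is enough to insert $\mathcal{G}$ between $\mathcal{C}_f$ and $\mathcal{C}_K$: produce a cofinal sequence of inequalities $g_n\le H_n$, fitting into a triple ladder $f_n\le g_n\le H_n$.

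First I would reduce to the case where $K$ lies in the interior of $\bar{M}$: by the independence of $SH^*_M(-)$ from the choice of convex cylindrical end, we may enlarge $\bar{M}$ (and correspondingly the region on which the $g_n$ are $C^2$-small) so that $K$ is interior, without changing $SH^*_M(K)$ or the ring $H(\widehat{tel^*}(\mathcal{G}))\cong H^*(M;\Lambda)$. With $K$ interior I would choose $\mathcal{G}$ with all $g_n$ strictly negative on $\bar{M}$ (still $C^2$-small and converging to $0$ there) and of small positive slope at infinity, and then choose the acceleration data $\{H_n\}$ for $K$ so that on $K$ it converges to $0$ more slowly than the $g_n$ do, while already dominating the (small, essentially linear) functions $g_n$ on the complement of $K$ and at infinity. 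After passing to cofinal subsequences this yields $g_n\le H_n$, hence a telescope map $\widehat{tel^*}(\mathcal{G})\to\widehat{tel^*}(\mathcal{C}_K)$; I define $r_K$ to be the induced map on homology. It is a ring map because the continuation maps underlying restriction maps are compatible with the products on the two sides \cite{TVar,AGV}, and the product on $H(\widehat{tel^*}(\mathcal{G}))$ is the quantum product on $H^*(M;\Lambda)$.

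Finally, since $f_n\le g_n\le H_n$ is a triple ladder, the functoriality of restriction maps (continuation maps compose up to chain homotopy) shows that the composite $SH^*_M(M)\xrightarrow{r_M}H^*(M;\Lambda)\xrightarrow{r_K}SH^*_M(K)$ agrees with the map induced by $f_n\le H_n$, which is exactly the restriction map $SH^*_M(M)\to SH^*_M(K)$. The only genuinely delicate point is the comparison of acceleration data in the second paragraph — arranging $g_n\le H_n$ while $\mathcal{G}$ approximates $0$ on $\bar{M}$ from below and has positive slope at infinity, whereas $\mathcal{C}_K$ must approximate $\chi_K^\infty$ from below — which amounts to choosing the various rates of convergence and the behavior near $\partial K$ and $\partial\bar{M}$ compatibly; this is standard but fiddly, and one may instead sidestep it entirely by factoring $r_K$ as $H^*(M;\Lambda)\xrightarrow{c^*}SH^*(M)\cong SH^*_M(\bar{M})\to SH^*_M(K)$, through the canonical unital ring map $c^*$ to symplectic cohomology and the restriction map for $K\subset\bar{M}$, using that $c^*\circ r_M$ is the restriction $SH^*_M(M)\to SH^*_M(\bar{M})$ together with functoriality of restriction for $M\supset\bar{M}\supset K$.
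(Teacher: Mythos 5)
Your proposal matches the paper's proof: the paper likewise takes $K\subset\bar{M}$, defines $r_K$ as the restriction map from $H(\widehat{tel^*}(\mathcal{G}))$ to $SH^*_M(K)$ induced by interleaving the $g_n$ with acceleration data for $K$, and invokes functoriality of restriction maps to obtain $r_K\circ r_M=(\text{restriction})$. The only additions in your write-up are spelling out the cofinal-interleaving bookkeeping and offering the alternative factorization through $SH^*(M)\cong SH^*_M(\bar M)$, both of which are consistent with (and implicit in) the paper's argument.
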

\begin{proof}
    Since $K$ is compact, we can assume it is contained in $\bar{M}$. Then we define $r_K$ as the restriction map from $H(\widehat{tel^*}(\mathcal{G}))$ to $SH^*_M(K)$. The rest follows from the functoriality of restriction maps. In other words, the following diagram commutes.
    $$
    \begin{tikzcd}
    SH^*_M(M)\cong H^*_c(M) \arrow[rr, bend left=20, "{r_M}"] \arrow[r]
    & H^*(M) \arrow[r, "{r_K}"]
    & SH^*_M(K)
    \end{tikzcd}
    $$
\end{proof}

\begin{proof}
    [Proof of Theorem \ref{thm-rfh-sh-vanish}]
    The set $K_M$ is non-compact, but our choice of Hamiltonian functions to define $SH^*_M(K_M)$ all have one-periodic orbits in a compact set. Moreover, $\bar{M}$ and $K_M$ are two domains in $M$ with a common boundary $\partial\bar{M}$. Therefore the Mayer-Vietoris sequence follows from the same proofs in \cite{Var21}. The only difference is that \cite{Var21} uses Novikov filtration but here we use action filtration on the Floer complexes. The algebraic results needed to deal with this difference (certain complex is acyclic) will be discussed in Proposition \ref{p:MV}, where we give a full proof of the Mayer-Vietoris property. The algebraic properties of the sequence are proved in the above propositions. 
\end{proof}

\subsection{Proof of Theorem \ref{thm-prequantum}}\label{sec-quantum-proof}

Now we move to the second extension of symplectic cohomology with support. Fix a closed symplectic manifold $(D,\omega_D)$ with an integral lift $\sigma$ of its symplectic class $[\omega_D]$. Assume that $[\omega_D]=\kappa c_1(TD)$ for some $\kappa>0$ and assume that the Chern number of any sphere $S^2\to D$ with positive symplectic area is at least $2$. Choose a principle circle bundle $p: C\to D$ with Euler class $-\sigma$ and connection one-form $\alpha$ on $C$ such that $d\alpha=p^*\omega_D$. This makes $(C,\xi:=\ker\alpha)$ into a contact manifold. Let $S(C)$ be the symplectization of $C$, which is canonically identified with $(C\times (0,\infty), d(r\alpha))$. For any compact subset $K$ of $S(C)$, we will define symplectic cohomology with support on $K$. Since $S(C)$ has a concave end, extra care need to be taken to avoid Floer cylinders escaping to the negative infinity. The main technique here is the compactness results used in \cite[Section 9.5]{CO}. Note that both the symplectic class and $c_1(TS(C))$ vanish here, so we simply have $\Lambda=\Bbbk$ and Hamiltonian Floer complexes are generated by contractible $1$-periodic orbits without having to specify the caps.

In the symplectization $S(C)$, a smooth function  $h: S^1\times S(C)\to \mathbb{R}$ is called \textit{$b$-admissible} for some $b\in\mathbb{R}$ if for every $t\in S^1$, 
\begin{itemize}
    \item $h_t:=h(t,\cdot): S(C)\to \mathbb{R}$ is constant and equal to some $h_t(-\infty)\in \mathbb{R}$ near the concave end, and
    \item $h_t$ is of the form $m_tr+d_t$ for $m_t,d_t\in \mathbb{R}$ near the convex end.
    \item $1$-periodic orbits of $h$ with action less than $b$ are non-degenerate and of finite number.
    \item Inside $\mathcal{L}S(C):=C^\infty(S^1,S(C))$ with $C^0$-norm, the locus $\mathcal{P}_{<b}(h)$ of $1$-periodic orbits of action less than $b$ and  the locus $\mathcal{P}_{\geq b}(h)$ of $1$-periodic orbits of action at least equal to $b$ are isolated in the sense that their $\epsilon$-neighborhoods are disjoint for some $\epsilon>0.$
\end{itemize} 
Note that if $h$ is $b$-admissible and $b'<b$, then $h$ is $b'$-admissible as well.

A typical example of a $b$-admissible function $h$ satisfies the following simpler to state properties: \begin{itemize}
    \item For some $r_-'<r_-\in \mathbb{R}$, in the region $r<r_-$, $h_t(r,x)=\rho(r)$ for some function $\rho:(-\infty, r_-)\to \mathbb{R}$ that is equal to $h(-\infty)>b$ for $r\leq r_-'$ and is concave on $(r_-',r_-)$.
    \item For some $r_+\in \mathbb{R}$, in the region $r>r_+$, $h_t(r,x)=mr+d$ for some $m,d\in \mathbb{R}$ near the convex end such that $m$ is not a Reeb period of $(C,\alpha)$.
    \item All $1-$periodic orbits of $h$ in the region $r_-\leq r \leq r_+$ are non-degenerate.
\end{itemize} 
Let us call such $b$-admissible functions \emph{typical} $b$-admissible functions. A typical $b$-admissible function $h$ has many degenerate orbits. For example, the constant orbits on $\{r=r'_-\}$ are not even of Morse-Bott type. We will exclude them from our Floer complexes by an action cutoff at level $b$.

We also assume that all almost complex structures used on $S(C)$ below are equal to a fixed cylindrical almost complex structure $J(-\infty)$ near concave end and to some cylindrical almost complex structure near the convex end.

\begin{lemma}\label{lem-gromov-compactness}
    Let $b$ be a real number, $h,h'$ two $b$-admissible functions with $h<h'$ and  $J,J'$ two $1$-periodically time dependent compatible almost complex structures on $S(C)$. Consider domain dependent monotone Floer data $(H_{k},J_k)$, $k=1,2,\ldots$ equaling $(h,J)$ and $(h',J')$ near the ends, which converge to Floer data $(H_{\infty},J_\infty)$ as $k\to \infty.$ Let $u_k: \mathbb{R}\times S^1\to M$, for $k=1, 2,\ldots$ satisfy the Floer equation for $(H_{k},J_k)$ with the same asymptotics $\gamma_-,\gamma_+$ whose actions are less than $b$. Assume that the images of all $u_k$ lie in a region $r>r_0.$ Then, $u_k$ has a subsequence converging to a broken cylinder $u_\infty$ consisting of solutions to the Floer equation of $(h,J)$, then a solution to that of $(H_{\infty},J_\infty)$ and then of $(h',J')$ (uniformly on all compact subsets, up to reparametrization as usual) going from $\gamma_-$ to $\gamma_+$. In particular, all the $1$-periodic orbits that appear as an asymptotic condition in the broken solution automatically have action less than $b.$ 
\end{lemma}
\begin{proof}
    Let us first make an important note. Let $u$ be a finite energy Floer solution for $(H,J)$ equaling $(h,J)$ and $(h',J')$ near the ends. Then, for any $\epsilon>0$, for $s$ sufficiently small $u_s:=u(s,\cdot)$ must be in an $\epsilon$ neighborhood of the locus of all $1$-periodic orbits of $h$ inside $\mathcal{L}S(C)$ and similarly for the output with primes. This follows immediately from \cite[Lemma 6.5.13]{AuD}.

    We now run the usual Gromov-Floer compactness argument (e.g. \cite[Lemma 9.1.7]{AuD}). Note that by the maximum principle and our assumption on the existence of $r_0$, in fact the images of all $u_k$'s lie in a compact subset. First, by compactness as in \cite[Theorem 6.5.4]{AuD} we extract a subsequence (and reparameterizations) of $u_k$ which converges to a finite energy Floer cylinder $v$ with output asymptotic $\gamma_+$. Now $v_s$ for sufficiently small $s$ is in the $\epsilon>0$ neighborhood of $1$-periodic orbits of $h$ inside $\mathcal{L}S(C)$, where we choose $\epsilon$ so small that the $\epsilon$-neighborhoods of each point in $\mathcal{P}_{<b}(h)$ and the $\epsilon$-neighborhood of $\mathcal{P}_{\geq b}(h)$ are all pairwise disjoint from each other. Hence, for all sufficiently small $s$, $v_s$ lies in exactly one of these neighborhoods. 

    We claim that it cannot lie inside that of $\mathcal{P}_{\geq b}(h)$. If it does, then by \cite[Proposition 6.5.7]{AuD} (also noting the Step 1 of the proof) we obtain a contradiction to the fact that our maps do not decrease action. If it lies in one of the other neighborhoods, then we conclude that $v$ has an input asymptote to a non-degenerate orbit with action less than $b.$

    We continue building the broken solution in the usual way. At each step, we prove that the input must asymptote to a non-degenerate orbit with action less than $b$ using the same argument. Eventually, we will construct a Floer cylinder that satisfies the Floer equation of $(H_{\infty},J_\infty)$ as the action keeps decreasing and we have only finitely many orbits with action less than $b$. Continuing from there to build our solution, we must end up at a solution that has $\gamma_-$ as an input asymptotic by the same reason. This finishes the proof.
\end{proof}

In the above lemma, a $C^0$-bound of Floer solutions is assumed near the concave end. Now we justify this assumption with the help of the minimal Chern number assumption on $D$. Our proof is essentially the same as \cite[Section 9.5]{CO} and employs a neck-stretching operation. The only difference is that our Boothby-Wang contact form is Morse-Bott non-degenerate, rather than non-degenerate. We give a detailed discussion for reader's convenience.

\begin{proposition}\label{p:nonescape}Let $b$ be a positive real number.
\begin{enumerate}
    \item Let $h$ be a $b$-admissible function on $S(C)$. For suitably chosen almost complex structures (omitted but present in the claims that follow), the action filtered $\Bbbk$-module $CF^*_{(-\infty, b)}(h)$ generated by the $1$-periodic orbits of $h$ with action less than $b$ can be equipped with a Floer differential that satisfies $d^2=0$. 
    \item Moreover, for another $b$-admissible function $h'>h$ and a suitable monotone homotopy connecting them, the continuation map $CF^*_{(-\infty, b)}(h)\to CF^*_{(-\infty, b)}(h')$ is a well-defined filtration non-decreasing chain map. 
    \item  Finally,  for $b$-admissible functions $h''>h'>h$ and monotone homotopies connecting $h$ and $h'$, $h'$ and $h''$ and also  $h$ and $h''$, we can construct a filtration non-decreasing chain homotopy between $CF^*_{(-\infty, b)}(h)\to CF^*_{(-\infty, b)}(h'')$ and the composition $CF^*_{(-\infty, b)}(h)\to CF^*_{(-\infty, b)}(h')\to CF^*_{(-\infty, b)}(h'')$.  
\end{enumerate}

\end{proposition}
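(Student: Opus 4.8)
The plan is to run the standard construction of a truncated Floer complex, the only nonstandard ingredient being an a priori confinement of Floer solutions away from the concave end of $S(C)$, for which the hypothesis that $h$ is \emph{constant} near the concave end is essential.

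First I would fix Floer data: a $C^\infty$-small time-dependent perturbation of $h$ (still admissible) making all $1$-periodic orbits of action less than $b$ nondegenerate, together with an $S^1$-family of $\omega$-compatible almost complex structures that are of contact type for $r$ near the convex end and cylindrical (translation invariant) on the region $\{h\equiv e_h\}$ near the concave end, where $e_h$ denotes the constant value of $h$ there. Every $1$-periodic orbit lying in $\{h\equiv e_h\}$ has action $\approx e_h>b$: its action is $\int_{S^1}h\,dt$ plus the area of a cap, and since $\omega$ and $c_1$ vanish on $\pi_2(S(C))$ the cap contributes a controllably small amount for an orbit near a critical point of the perturbation. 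Hence such orbits do not enter the complex, and I define $CF^*_{(-\infty,b)}(h)$ as the free $\Bbbk$-module on the remaining orbits, with differential counting rigid Floer cylinders running between orbits of action less than $b$.

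The two a priori estimates needed are: (i) at the convex end, the usual maximum principle for $r\circ u$ (using contact-type $J$ and $h$ linear there), confining solutions from escaping to $r\to\infty$; and (ii) at the concave end, the statement that a Floer solution all of whose asymptotics have action less than $b$ cannot enter $\{h\equiv e_h\}$. For (ii) I would argue as in \cite[Section 9.5]{CO}: on $\Sigma_0:=u^{-1}(\{r\le r_0\})$, with $r_0$ a regular value of $r\circ u$ inside the constant region, one has $X_h=0$, so $u|_{\Sigma_0}$ is $J$-holomorphic for a cylindrical $J$; applying Stokes to $\int_{\Sigma_0}u^*\omega=\int_{\Sigma_0}u^*d(r\alpha)$, the positivity of the symplectic area of a $J$-holomorphic curve together with the sign of the boundary integral over $\{r\circ u=r_0\}$ forced by the contact-type condition gives $\Sigma_0=\varnothing$; if $u$ had a genuine puncture at $r\to 0$, the same computation on annular sub-pieces would force the asymptotic period there to vanish, which is impossible. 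The hard part will be making this concave-end estimate uniform over the relevant one-parameter (continuation) and two-parameter (chain homotopy) families and with the correct constants; this is the only place the concavity of $S(C)$ causes difficulty, and it is exactly what the constancy of $h$ near $r=0$ is there to repair.

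Granting (i) and (ii), the rest is routine. The energy of a Floer solution is bounded by the difference of actions of its asymptotics, hence by $b$ minus a fixed lower bound; monotonicity (the Chern-number-at-least-$2$ hypothesis on $D$) excludes sphere bubbling; and the two confinements keep all solutions in a fixed compact subset, so Gromov--Floer compactness applies. The zero-dimensional moduli spaces of cylinders between action-$<b$ orbits are then compact, giving a well-defined $\Bbbk$-linear differential, and counting boundary points of the one-dimensional moduli spaces gives $d^2=0$, using that breaking onto an orbit of action $\ge b$ cannot occur since the differential does not decrease action. For admissible $h'>h$ I would take the monotone homotopy to be $s$-independent near both ends (linear at the convex end with $s$-monotone slope, constant $\ge$ a fixed level $>b$ at the concave end), so that (i) and (ii) apply verbatim to continuation solutions; this yields the continuation chain map, and independence of choices up to chain homotopy as well as compatibility of compositions up to chain homotopy follow from the same two confinement arguments applied to the corresponding homotopies-of-homotopies, i.e.\ to moduli problems with one more parameter.
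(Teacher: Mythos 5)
Your proposal takes a genuinely different route from the paper's, and the key step fails. The paper's proof is essentially a citation: the minimal Chern number $\geq 2$ hypothesis on $D$ gives index positivity of the Boothby--Wang contact form on $C$ (by \cite[Section 5.3]{BKK}), and the proposition then follows from the SFT compactness argument of \cite[Proposition 9.17]{CO}. That argument uses index positivity to eliminate lower-level holomorphic planes asymptotic to Reeb orbits at convex ends, and uses the action truncation together with a basic action computation to eliminate buildings whose output component has a Reeb asymptotic at the concave end; those are exactly the two sources of non-compactness that the concave end of $S(C)$ introduces.

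Your step (ii) instead attempts an a priori confinement of each individual Floer cylinder away from the concave end via Stokes on $\Sigma_0 := u^{-1}(\{r\le r_0\})$. That is not what \cite[Section 9.5]{CO} does, and the sign in your Stokes argument does not work out. Since $u$ is $J$-holomorphic on $\Sigma_0$ and $J$ is cylindrical, the standard boundary computation (with the outward normal of $\Sigma_0$ pointing towards larger $r$) gives $\int_{\partial\Sigma_0}u^*\alpha\ge 0$, hence $\int_{\Sigma_0}u^*\omega = r_0\int_{\partial\Sigma_0}u^*\alpha\ge 0$. Both sides of Stokes are nonnegative, so there is no contradiction and $\Sigma_0$ is not forced to be empty. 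Put differently, $\log(r\circ u)$ is subharmonic, which gives a maximum principle at the convex end but no minimum principle at the concave end, so a Floer cylinder can dive arbitrarily deep into $\{r\le r_0\}$. It is therefore a sequence of cylinders that can escape in the SFT limit, and the index-positivity and action-truncation ingredients are precisely what rule out the resulting bad buildings. The remark after the proposition says as much: the ``\cite[Lemma 2.3]{CO} plus maximum principle'' argument of \cite{CFO, Ue}, which is what your Stokes computation amounts to, is stated not to apply to the admissible Hamiltonians used here, and the truncated SFT compactness argument is what is required.

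Relatedly, you invoke the minimal Chern number $\geq 2$ hypothesis only to exclude sphere bubbling, but $S(C)$ is exact, so there are no nonconstant holomorphic spheres and this is automatic. The hypothesis is actually needed to establish index positivity of the Reeb orbits of $\alpha$, which is the ingredient your proposal omits and which the paper's proof relies on.
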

\begin{proof}
     Let $h$ be a $b$-admissible function on $S(C)$, so there exists $r_0\in (0,\infty)$ such that $h_t(x,r)\equiv h_t(-\infty)$, for any $t\in S^1$, $r<r_0$ and $x\in C$. We assumed that any one-periodic orbit of $h$ with action less than $b$ is non-degenerate. We define the graded, filtered (by action) vector space $CF^*_{(-\infty, b)}(h)$ as stated in the proposition. Note that every point in the region ${r\leq r_0}$ is a degenerate constant orbit of $h$. As a result these constant orbits have action $\int_{0}^{1}h_t(-\infty)dt\geq b$ and are not taken as generators in $CF^*_{(-\infty, b)}(h)$. 

Next we will equip $CF^*_{(-\infty, b)}(h)$ with a Floer differential. Once we find a suitable almost complex structure, the matrix entries of the differential is defined by counting rigid (up to reparametrization), or equivalently, index $1$ solutions to the Floer equation as is usual in Floer theory. For some $r_0'<r_0$, fix the hypersurface $C_{r_0'}=\{r=r_0'\}$. We can choose a time-dependent almost complex structure $J$ that is time-independent and equal to $J(-\infty)$ in the region $\{r<r_0\}$ and perform the neck-stretching operation at $C_{r_0'}$, which gives us a sequence of almost complex structures $\{J_k\}_{k=1}^\infty$. Here $J(-\infty)$ should be of a particular type to do neck-stretching. We refer to \cite[Lemma 2.4]{CO} or \cite[Section 2.5]{CM2} for a detailed account of the operation for a similar purpose. We will make sure that $J$ is chosen sufficiently generic as needed in the following argument and then prove that for sufficiently large $k$ the differential is well-defined and squares to $0$. We do not spell out the genericity requirement on $J$ here as a stylistic choice. The actual Floer trajectories that we examine later will have asymptotics to non-degenerate Hamiltonian orbits. Recall that $J$ is fixed on $\{r<r_0\}$ to perform the neck-stretching and is allowed to vary, in a time-dependent way, near those Hamiltonian orbits. The formal proofs of transversality can be based on the standard argument in Floer theory, e.g \cite[Section 8]{SZ} or \cite[Section 4.2]{Schwarz}. In a similar situation, the transversality is proved in \cite[Corollary 7.6]{CM2}. 

We will first prove that for $k$ sufficiently large, we have only finitely many (up to reparametrization) Floer solutions between any two generators $\gamma_-,\gamma_+$ of $CF^*_{(-\infty, b)}(h)$ with $i(\gamma_+)=i(\gamma_-)+1.$ Note that, because of their actions, all of these generators are in the region $\{r>r_0\}$. Assume that the claim is not true, i.e. that there exists a sequence of $k$'s for which the compactness of the zero dimensional moduli spaces fails. For every $k$ in this sequence, we have Floer solutions that intersect $C_{r_0'}$ (otherwise Lemma \ref{lem-gromov-compactness} would imply that the zero dimensional moduli spaces are actually compact).
Then, by SFT compactness \cite[Theorem 10.3]{BEHWZ} or \cite[Theorem 2.9]{CM2} (and finiteness of the number of generators), we can find a sequence of $J_k$-Floer solutions $u_k$ of index $1$ between some $\gamma_-,\gamma_+$ that has an SFT limit $u_\infty$, which is an SFT building consisting of components in three types of regions: the positive completion of $\{r<r_0'\}$, several levels of the symplectization of $\{r=r_0'\}$, and the negative completion of $\{r>r_0'\}$.  The components in $\{r<r_0'\}$ and in $\{r=r_0'\}$ are holomorphic buildings, which solve the Cauchy-Riemann equation with a time-independent cylindrical almost complex structure. The components in $\{r>r_0'\}$ are punctured broken Floer trajectories, which solve the Floer equation with respect to $h$ and a time-dependent complex structure we call $J_\infty$, with punctures asymptotic to Reeb orbits. We aim to prove that the components in $\{r<r_0'\}$ and $\{r=r_0'\}$ are empty, which gives a contradiction and proves the claim. To do this, we analyze the components in $\{r>r_0'\}$.

The building $u_\infty$ has components $u_\pm$ that lie in $\{r>r_0'\}$ and have asymptotes to $\gamma_\pm.$ We claim first that $u_+=u_-.$ If not, then we can find separating simple loops $\delta_k$ in the domain of $u_k$ (oriented as the boundary of the domain $D_k$ that contains the output) for sufficiently large $k$, whose projections to $C$ converge to a Reeb orbit $\tau$ (for some oriented parametrizations) and whose projections to $(0,\infty)$ lie in $r< r_0.$ The topological energy of $u_k$ restricted to $D_k$ is always positive and it is also supposed to equal $\mathcal{A}(\gamma_+)+\mathcal{A}(\delta_k)$ by Stokes Theorem, where $\mathcal{A}(\delta_k)$ is the integral  of $h_t(-\infty) dt+u_k^*(r\alpha)\in \Omega^1(\mathbb{R}\times S^1_t)$ on $\delta_k.$ Note that the integral of $f(t)dt$ (a closed $1$-form on the cylinder) along $\delta_k$ is $-\int_{0}^{1}f(t')dt'$ for any $f:S^1\to \mathbb{R}$ and the integral of $u_k^*(r\alpha)$ along $\delta_k$ converges to minus the period of $\tau$ times $r_0'$. Using $\int_{0}^{1}h_t(-\infty)dt\geq b$, this shows that $\mathcal{A}(\gamma_+)+\mathcal{A}(\delta_k)<0$ and gives a contradiction. We have shown that $u_+=u_-.$ Moreover, the punctures of $u_\pm$ that are asymptotic to Reeb orbits correspond to non-separating curves in the domain cylinder. This implies that each of them is capped with a (possibly broken) disk whose floors living in spaces that are all homeomorphic to the symplectization $S(C)$. Therefore, those asymptotic Reeb orbits are contractible in $S(C)$.

Now, consider the component $u_+$ of $u_\infty$ in $\{r>r_0'\}$, whose asymptotics consist of $\gamma_-,\gamma_+$ and $p\geq 0$ Reeb orbits that are elements of the Morse-Bott components $\gamma_i$, $1\leq i\leq p$. Recall the Reeb orbits of our contact form are multiple covers of circle fibers. The space of Reeb orbits is a disjoint union of manifolds. Here the Morse-Bott components mean the components of the disjoint union. The above argument shows that any Reeb orbit contained in $\gamma_i$ is contractible in $S(C)$, since the orbits in the same component are homotopic. Note that $u_+$ is an element of the moduli space $$\mathcal{M}(\gamma_-,\gamma_+; \gamma_i, 1\leq i\leq p; h, J_\infty)$$ of \emph{punctured Floer trajectories} up to the translation action of $\mathbb{R}$ in the domain cylinder (see page 31 of \cite{BO} for a very similar definition). The Fredholm index of this moduli space is
\begin{equation}\label{eq:index1}
\begin{split}
    & i(\gamma_+)-i(\gamma_-)-1-\sum_{i=1}^p(\text{CZ}(\gamma_i)-\dfrac{1}{2}\dim D+ \dfrac{1}{2}\dim S(C) -3)\\
    =& i(\gamma_+)-i(\gamma_-)-1-\sum_{i=1}^p(\text{CZ}(\gamma_i)-2).
\end{split}
\end{equation}
Here the term $\frac{1}{2}\dim D$ comes from the Morse-Bott nature of the Boothby-Wang contact form, and $\text{CZ}(\gamma_i)$ means the Robbin-Salamon index. This index computation is a combination of \cite[Equation (4)]{CM2} and \cite[Page 2103]{CO}. By choosing $J$ generically, we can make sure that these moduli spaces are regular for all choices of $\gamma_-,\gamma_+; \gamma_i, 1\leq i\leq p$. Note that $\gamma_\pm$ are non-degenerate Hamiltonian orbits and and we can use domain dependent almost complex structures near them.

If the minimal Chern number of $D$ is at least $2$, then $\text{CZ}(\gamma_i)$ is at least $4$ for any contractible $\gamma_i$, by \cite[Equation (2.11)]{BKK}. Hence, in particular, the moduli space to which $u_+$ belongs has a negative expected dimension if $p>0$. By regularity, $p$ must be zero. This implies that the part of $u_\infty$ in $\{r<r_0'\}$ and $\{r=r_0'\}$ is indeed empty and gives the desired contradiction.

This shows that for sufficiently large $k$, say $k\geq k_0,$ the differential is well-defined. We now show that for sufficiently large $k\geq k_0$, the differential squares to $0$. If this is not true, then it is due to the non-compactness of the moduli space of possibly broken index $2$ Floer cylinders for each $k\geq k_0.$ Hence, we can extract a sequence $u_k$ as above, with the only difference that they are now of index $2.$ We reach the same conclusion by considering Equation $(\ref{eq:index1})$ for $i(\gamma_+)-i(\gamma_-)=2$. This finishes the proof of (1).

To prove (2) we run the same argument for continuation maps. The only difference is that now the moduli spaces are defined without modding out the action of $\mathbb{R}.$ The Hamiltonian data and almost complex structures that go into the Floer equations become fully domain dependent (not just $t$ dependent). The non-negativity of the topological energy is still true due to the monotonicity requirement. Hence, the action arguments go through the same way. The dimension formula for the punctured Floer solutions now becomes  \begin{equation}
    i(\gamma_+)-i(\gamma_-)-\sum_{i=1}^p(\text{CZ}(\gamma_i)-2).
\end{equation} But, this time, for defining continuation maps, the degree difference is $0$, and for proving that we have a chain map, the degree difference is $1$. The result is that for $J$ and $J'$ that we constructed in the previous stage for $h$ and $h'$, and any monotone homotopy that does not admit an $s$-independent Floer solution (which exists in plenty), we can choose a generic (in a similar strong meaning as before) interpolating path $\mathcal{J}$ and do neck stretching at a sufficiently small $r_0'$.\footnote{The regularity of moduli spaces could be also achieved for $h'\geq h$ and moreover the locus where the two functions are equal is equal to the closure of an open subset; compare with \cite[Proposition 4.2.17]{Schwarz}.} We prove that for sufficiently large $k$, the compactness requirements are also satisfied.

The final statement in (3) about the chain homotopy requires working with parameter space $P=[0,1]$ and considering moduli spaces that live over $P$. The Floer continuation map data over $0$ and $1$ are given. We choose an interpolation of the Hamiltonian data $H_P$ such that for each $p\in (0,1)$, we have a cylinder dependent Hamiltonian which satisfies the monotonicity requirement (and does not admit $s$-independent solutions) and we choose a generic interpolation $J_P$ of the almost complex structures. Both are supposed to be given by gluing near the ends of $P.$ 

The matrix entries of the chain homotopy are defined by counting rigid solutions of the parametrized moduli problem. These rigid solutions go from a generator $\gamma_-$ of $CF^*_{(-\infty, b)}(h)$ to a generator $\gamma_+$ of $CF^*_{(-\infty, b)}(h'')$ with $i(\gamma_+)=i(\gamma_-)-1.$ Looking at the $1$-dimensional compactified solution space of this parametrized moduli problem, we prove that this is indeed a chain homotopy. These moduli spaces go between generators of the same degree. In order to get compactness, we again need to do neck stretching at a sufficiently small $r_0'.$ 

The part of the argument that relies on action is the same. For the second part, the key is to make sure that the moduli spaces $$\mathcal{M}(\gamma_-,\gamma_+; \gamma_i, 1\leq i\leq p; H_P, J_{P\infty})$$ that live over $P$ are regular as the solutions of a parametrized moduli problem. That is, the moduli spaces containing of pairs $(u,p)$ with $p\in P$ and $u$ in the Floer moduli space of $p$, are transversally cut out. The dimensions of these moduli spaces are now 
\begin{equation}
    i(\gamma_+)-i(\gamma_-)+1-\sum_{i=1}^p(\text{CZ}(\gamma_i)-2),
\end{equation}
where the $+1$ term reflects that $P=[0,1]$ is one-dimensional. Again, since whenever $i(\gamma_+)-i(\gamma_-)\leq 0$ and $p\geq 1,$ these moduli spaces are empty, the argument goes through.

\end{proof}

\begin{remark}\label{rem-parametrized-nonescape}In fact, for maps defined using continuation map data (possibly broken) parametrized by higher dimensional manifolds with corners $P$ (see \cite[Section 4.c.]{Se2} for a friendly introduction in the slightly simpler situation where $P$ is a manifold with boundary), the same argument continues to hold. The dimension formula becomes \begin{equation}
    i(\gamma_+)-i(\gamma_-)+\dim(P)-\sum_{i=1}^p(\text{CZ}(\gamma_i)-2).
\end{equation} The degree difference that is relevant for defining the map changes to $-\dim(P)$ and for the proof that it satisfies the desired relation to $-\dim(P)+1$. In other words, although our parameter space is of possibly high dimension, we only consider the index zero or index one solutions as parametrized moduli problems. Then the minimal Chern number assumption on $D$ tells us such solutions cannot admit punctures.
\end{remark}

Of course, there are other ways to achieve the above $C^0$-bound results. We give brief comments in the following remarks about how they are applicable to our setup.

\begin{remark}
    For a contact manifold with a non-degenerate contact form, there is an \textit{index-positivity} condition in \cite{CFO,CO} (specifically, Condition
(i) from \cite[Section 9.5]{CO}). By applying the same neck-stretching operation, \cite[Section 9.5]{CO} establishes the well-definedness of Hamiltonian Floer theory on the symplectization of a contact manifold with this property. Some further developments in this direction include \cite{Ue,BKK}. In our case, one can slightly perturb the Boothby-Wang form and try to apply their results. Under the minimal Chern number assumption, the carefully perturbed contact form (up to some arbitrarily high period bound) is index-positive, as shown in \cite[Section 5.3]{BKK}. Alternatively, to the Morse-Bott moduli spaces we used in our proof, we could either perform neck stretching along a slight perturbation of $C_{r_0'}$ or prove that doing Floer theory for arbitrarily small perturbations of $\alpha$ suffices for our purposes.
\end{remark}

\begin{remark}
    Let us note that admissible functions are more general than the usual V-shaped functions used to define Rabinowitz Floer homology \cite{CFO, Ue, CO, BKK}. The SFT compactness argument that we referred to in the proof relies on the elimination of two types of buildings. The first type are those where there are holomorphic planes in lower levels converging to Reeb orbits at convex ends, see \cite[Figure 2]{Ue}. These are eliminated by the index positivity condition in the same way throughout the literature (including here). The second type of buildings to be eliminated are those where the output is disconnected from the input in the building and the component containing the output (living at the top level) has an end converging to a Reeb orbit at the concave end, see \cite[Figure 3]{Ue}. In \cite{CFO, Ue}, using an argument that originates from \cite[Section 5.2]{BO}, these are eliminated by combining \cite[Lemma 2.3]{CO} and the maximum principle without having to do an action truncation. This argument does not apply to our case. On the other hand, the argument used in \cite[Proposition 9.17]{CO} does employ an action truncation and relies on a basic action computation. This one does apply in the generality that we require.
\end{remark}

On the other hand, the symplectization of a prequantization has a non-exact filling given by the disk bundle of the corresponding complex line bundle. An alternative proof to the compactness result is to prove that the Floer solutions considered above do not touch the zero section for suitably chosen almost complex structures. It is likely that such an approach is related to the wrapping-number-zero Rabinowitz Floer cohomology in \cite{AK}.

\begin{remark} Instead of working in the symplectization $S(C)$ and use neck stretching, we could also do a symplectic cut at $r=1$ and work inside the component that contains the convex end. This is equivalent to the negative quantum line bundle $E$ over $D$ with Euler class $-\sigma$ equipped with a Hermitian connection corresponding to $\alpha.$ Then, we would restrict ourselves to almost complex structures that are equal to a given one that makes the zero section $Z$ an almost complex submanifold in some neighborhood of $Z$. We consider Hamiltonian functions on $E$ that are constant in a neighborhood of $Z.$ We filter out the $1$-periodic orbits that occur along $Z$ artificially (and some others) by doing an action cutoff below a level that is lower than the value of the Hamiltonians involved at $Z$. We can try to do Floer theory with the remaining orbits that satisfy the action condition and considering only the Floer solutions that are disjoint from $Z$. Usual Gromov compactness holds in $E$ for Floer solutions with bubbles attached, but we claim that, under our assumptions, if we choose a sufficiently generic $J$, then for the orbits that we are interested in, the solutions counted in defining the differential, continuation maps, homotopies etc. are actually not bubbled and are entirely disjoint from $Z$. Moreover, the desired algebraic relations (e.g. $d^2=0$) are satisfied. The meaning of sufficiently generic is now given by an analogue of Cieliebak-Mohnke's \cite[Propositions 6.9 and 6.10]{CM} about Floer solutions with tangency conditions on the zero section. Take a Floer cylinder $u$ that appears as a component of a limit of Floer cylinders $u_k$ (that are rigid or belong to a $1$ dimensional family) disjoint from $Z$. Say that $u$ passes through $Z$ a positive number $p$ times with any fixed orders of tangency (equal to intersection multiplicity minus one). In the limiting configuration, at every intersection point, there must be an attached bubble in $Z$ and the total symplectic area of the attached bubble configuration must equal the intersection multiplicity. To see this, first note that the intersection number with $Z$ of a smooth map $v:(\Sigma, \partial\Sigma)\to (E,E\setminus Z)$, where $\Sigma$ is a compact oriented surface, is given by $\int_{\partial\Sigma}(\partial v)^*(r\alpha)-\int_{\Sigma}v^*\omega.$ Now, consider a closed disk in the cylinder that contains a single point $z$ that maps to $Z$ under $u$. If there is no bubbling, $u_k\mid_D$ uniformly converges to $u\mid_D$. This is a contradiction since the intersection number of $u\mid_D$ with $Z$ would have had to be zero, but it is positive by positivity of intersection. Moreover, by basic properties of Gromov convergence, $\int_{\partial D}(\partial u_k)^*(r\alpha)-\int_{D}u_k^*\omega$ (all equal to $0$) converges to $\int_{\partial D}(\partial u)^*(r\alpha)-\int_{D}u^*\omega$ minus the total symplectic area of the bubble components. This shows that the intersection number equals the total symplectic area of the bubble (configuration) as desired. For the $i$th intersection, we call the intersection number $I_i$ and $c_i$ denotes the Chern number of the attached bubble in $D$. The Chern number of the bubble inside $E$ becomes $c_i-I_i.$ The expected dimension of the relevant Cieliebak-Mohnke type moduli space becomes $0$ or $1$ minus $$\sum_{i=1}^p2(c_i-I_i)+2(I_i-1)=\sum_{i=1}^p2(c_i-1).$$ The action considerations allows us to also eliminate Floer solutions that are broken along an orbit contained in $Z$ essentially by the same argument. Hence, we reach to the same conclusions this way too.
\end{remark}

\begin{remark}
    Let us warn the reader about some elementary points regarding Proposition \ref{p:nonescape}. Let $h$ be $b$-admissible, $h'$ be $b'$-admissible and $h<h'$. We can define the continuation maps for different truncation levels $b, b'$: $CF^*_{(-\infty, b)}(h)\to CF^*_{(-\infty, b')}(h')$ as linear maps using the usual procedure. On the other hand, unless $b\geq b'$, the map might fail to be a chain map. It is possible for Floer trajectories between two allowed orbits to converge to a broken Floer trajectory consisting of a Floer solution for the differential of $CF^*_{(-\infty, b)}(h)$ with output not asymptotic to a non-degenerate orbit (for example, hypothetically, it could have an output asymptote to one of the constant orbits near the concave end) and a continuation map solution. Also note that the map for $b>b'$ can be factored into the continuation map from the Proposition \ref{p:nonescape} $CF^*_{(-\infty, b)}(h)\to CF^*_{(-\infty, b)}(h')$ and action truncation map  $CF^*_{(-\infty, b)}(h')\to CF^*_{(-\infty, b')}(h')$. 

    For chain homotopies as in Proposition \ref{p:nonescape} part (3), where in addition we also take a $h''$ that is $b''$-admissible and try to consider the case $b<b'<b''$, there could be an even more basic problem. It is possible for a Floer trajectory between two allowed orbits to be in the same one parameter family with a broken Floer trajectory where the middle orbit is filtered by the action truncation.
\end{remark}

For any compact subset $K$ and any $b\in\RR_+$, we choose the following $b$-acceleration data:
\begin{itemize}
    \item A sequence of $b$-admissible functions $h_n$ that monotonically approximate from below $\chi_{S^1\times K}^\infty$ with slopes at convex end $m_{n,t}\to +\infty$ for all $t\in S^1$.
    \item Monotone homotopies of functions connecting $h_n$'s and almost complex structures, which fit in Proposition \ref{p:nonescape}.
\end{itemize}
We have in our minds using typical $b$-admissible functions in practice. It is helpful to keep our Figure \ref{fig:filled cob} on page \pageref{fig:filled cob} in mind. Particularly, when $K=\{1\}\times C$ these functions can be chosen as the ``V-shaped Hamiltonians'' in \cite[Section 3.1]{BKK}.

By Proposition \ref{p:nonescape}, the Floer complexes $CF^*_{(-\infty, b)}(h_n)$ and continuation maps between them are well-defined. We use them to define a Floer telescope
$$
tel^*(h_{(-\infty, b)}):= tel(CF^*_{(-\infty, b)}(h_1)\to CF^*_{(-\infty, b)}(h_2)\to\cdots ).
$$ 
Next we want to take an inverse limit as $b$ goes to positive infinity. It will be more convenient to use an inverse telescope model for the \textit{homotopy inverse limit} as in \cite[Appendix A.4]{BSV}.

Consider an inverse system of chain complexes over $\Bbbk$:
$$
\mathcal{C}: C_1^*\xleftarrow{i_{12}} C_2^* \xleftarrow{i_{23}}\cdots.
$$
We define $\prod_l C_l^*$ as the degree-wise direct product of $C_l^*$'s. There is a natural chain map 
$$
id-i: \prod_l C_l^*\to \prod_l C_l^*, \quad (c_l)\mapsto (c_l-i_{l,l+1}(c_{l+1})).
$$
The inverse telescope complex is defined as 
$$
tel^*_{\leftarrow}(\mathcal{C}):= Cone(id-i)[-1].
$$
It always enjoys a Milnor exact sequence.

\begin{lemma}
[Lemma A.7 in \cite{BSV}]
    There is a short exact sequence
    $$
    0\to \varprojlim_l\nolimits^1 H^{j-1}(C_l^*)\to H^j(tel^*_\leftarrow(\mathcal{C}))\to \varprojlim_l H^j(C_l^*)\to 0.
    $$
\end{lemma}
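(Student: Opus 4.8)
This is the standard Milnor $\varprojlim^1$ exact sequence, and the plan is to deduce it from the mapping cone long exact sequence together with the fact that cohomology commutes with products. First I would recall that, by construction, $tel^*_\leftarrow(\mathcal{C}) = \mathrm{Cone}(\mathrm{id}-i)[-1]$ is (a model for) the homotopy fiber of the chain map $\mathrm{id}-i\colon \prod_l C_l^*\to \prod_l C_l^*$, so that there is a distinguished triangle
$$
tel^*_\leftarrow(\mathcal{C})\longrightarrow \prod_l C_l^*\xrightarrow{\ \mathrm{id}-i\ } \prod_l C_l^*\longrightarrow tel^*_\leftarrow(\mathcal{C})[1].
$$
Taking cohomology yields a long exact sequence
$$
\cdots\to H^{j-1}\Bigl(\prod_l C_l^*\Bigr)\xrightarrow{\ \mathrm{id}-i\ } H^{j-1}\Bigl(\prod_l C_l^*\Bigr)\to H^j\bigl(tel^*_\leftarrow(\mathcal{C})\bigr)\to H^j\Bigl(\prod_l C_l^*\Bigr)\xrightarrow{\ \mathrm{id}-i\ } H^j\Bigl(\prod_l C_l^*\Bigr)\to\cdots.
$$

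Next I would use that the cohomology of a degree-wise direct product is the product of the cohomologies, i.e. $H^j(\prod_l C_l^*)\cong \prod_l H^j(C_l^*)$, and that under this identification the map induced by $\mathrm{id}-i$ is exactly the map $(x_l)\mapsto (x_l-(i_{l,l+1})_*x_{l+1})$ on $\prod_l H^j(C_l^*)$. By the classical description of $\varprojlim$ and $\varprojlim^1$ for a countable tower — $\varprojlim_l H^j(C_l^*)$ is the kernel of this map and $\varprojlim_l^1 H^j(C_l^*)$ is its cokernel — the long exact sequence splits into short exact sequences
$$
0\to \operatorname{coker}\Bigl(\mathrm{id}-i\colon \textstyle\prod_l H^{j-1}(C_l^*)\to \prod_l H^{j-1}(C_l^*)\Bigr)\to H^j\bigl(tel^*_\leftarrow(\mathcal{C})\bigr)\to \ker\Bigl(\mathrm{id}-i\colon \textstyle\prod_l H^{j}(C_l^*)\to \prod_l H^{j}(C_l^*)\Bigr)\to 0,
$$
which is precisely the asserted sequence once the kernel and cokernel are rewritten as $\varprojlim_l H^j(C_l^*)$ and $\varprojlim_l^1 H^{j-1}(C_l^*)$.

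Since this is cited as Lemma A.7 of \cite{BSV}, I would keep the argument to these few lines and refer there for details. There is no genuine obstacle here; the only points requiring care are bookkeeping ones: fixing the sign and degree-shift conventions so that $\mathrm{Cone}(\mathrm{id}-i)[-1]$ really sits in a triangle with $\mathrm{id}-i$ in the stated direction, and invoking the fact — valid because the tower is indexed by $\NN$ — that the derived functor $\varprojlim^1$ agrees with the cokernel of $\mathrm{id}-\text{shift}$ on the product. With those conventions pinned down, the short exact sequence drops out of the long exact sequence verbatim.
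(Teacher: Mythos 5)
Your proof is correct and is the standard Milnor sequence argument: the inverse telescope $\mathrm{Cone}(\mathrm{id}-i)[-1]$ sits in a distinguished triangle with $\mathrm{id}-i\colon \prod_l C_l^*\to\prod_l C_l^*$, cohomology commutes with degree-wise products, and the long exact sequence then splits into the stated short exact sequences once one identifies $\ker(\mathrm{id}-i)$ and $\operatorname{coker}(\mathrm{id}-i)$ on $\prod_l H^j(C_l^*)$ with $\varprojlim_l$ and $\varprojlim_l^1$ respectively. The paper itself does not reprove this lemma (it only cites Lemma A.7 of \cite{BSV}), and your argument is precisely the proof one expects to find there.
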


Back to our Floer theoretic setup, we say that $\{h_n\}$ is an acceleration data for $K$ if there exists a sequence of positive real numbers $b_1<b_2<b_3<\ldots$ converging to $\infty$ such that for all $N=1,2,\ldots$, the subsequence $h_N, h_{N+1},\cdots$ is a $b_N$-admissible acceleration data for $K$. It is easy to construct such acceleration data using typical admissible functions; in particular, the constant value of $h_n$ at the concave end is larger than $b_n$. Hence we can construct the telescope $tel^*(h_{(-\infty, b_N)})$ using $h_N, h_{N+1},\cdots$ (forgive our slight abuse of notation). For all $N=1,2,\ldots$, there is a natural map

\[
\begin{tikzcd}
    tel^*(h_{(-\infty, b_{N+1})})=\oplus_{n=N+1}^\infty\left( CF^*_{(-\infty, b_{N+1})}(h_{n})\oplus CF^*_{(-\infty, b_{N+1})}(h_{n})[1]\right)\arrow{d}\\
    tel^*(h_{(-\infty, b_N)})=\oplus_{n=N}^\infty\left(CF^*_{(-\infty, b_N)}(h_{n})\oplus CF^*_{(-\infty, b_N)}(h_{n})[1]\right)
\end{tikzcd}
\]
which is the composition of the action truncation projection on Floer complexes of $h_{N+1}, h_{N+2},\cdots$  followed by the natural inclusion. These telescopes form an inverse system. We define the symplectic cohomology with support on $K$ with respect to the data chosen in the procedure as
\begin{equation}
    SH^*_{S(C)}(K,h):=H(tel^*_{\leftarrow}tel^*(h_{(-\infty,b_N)})).
\end{equation}

\begin{proposition}
    For different choices of acceleration datum $h$ and $h',$ we have preferred isomorphisms $SH^*_{S(C)}(K,h)\cong SH^*_{S(C)}(K,h')$ that are closed under compositions.
\end{proposition}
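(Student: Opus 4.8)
The plan is to adapt the standard proof that symplectic cohomology with support is independent of the acceleration data (as in \cite{Var21,GV2,BSV}) to the present truncated setting, the decisive point being that all of the homotopy-coherence bookkeeping must be performed at a \emph{fixed} action truncation level $b$, with the passage to the inverse limit over $b$ postponed to the very end. So the first step is: given two acceleration data $h=\{h_n\}$ and $h'=\{h'_m\}$ (including their homotopies and almost complex structures), note that both are cofinal in the directed set of admissible functions lying strictly below $\chi_K^\infty$ with convex-end slopes and concave-end constants tending to $+\infty$ — directedness because a smoothing of the pointwise maximum of two such functions is again of this form. Fixing $b\in\RR_+$ and discarding, via Proposition \ref{p:nonescape}, the finitely many terms of each sequence whose concave-end value is $\leq b$, I would build a comparison chain map $\Theta_b\co tel^*(h_{(-\infty,b)})\to tel^*(h'_{(-\infty,b)})$: choose for each $n$ an index $\phi(n)$ with $h_n$ dominated by $h'_{\phi(n)}$ together with monotone homotopies realizing the domination, and assemble $\Theta_b$ out of the truncated continuation maps $CF^*_{(-\infty,b)}(h_n)\to CF^*_{(-\infty,b)}(h'_{\phi(n)})$ and the level-$b$ homotopies of Proposition \ref{p:nonescape}. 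The symmetric construction gives $\Theta'_b$, and the triple-composite chain homotopies — available precisely because all three truncation levels equal $b$ — show $\Theta'_b\circ\Theta_b\simeq\mathrm{id}$ and $\Theta_b\circ\Theta'_b\simeq\mathrm{id}$. Hence $\Theta_b$ is a quasi-isomorphism, and the usual homotopy-coherence of continuation data makes the induced isomorphism on $H^*\!\bigl(tel^*(h_{(-\infty,b)})\bigr)$ independent of $\phi$ and of all auxiliary choices.

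Next I would check compatibility with the inverse system over $b$, so that these fixed-level isomorphisms glue. For $b'>b$ the structure map is induced by the action-quotient $CF^*_{(-\infty,b')}(h_n)\twoheadrightarrow CF^*_{(-\infty,b)}(h_n)$ followed by inclusion of a tail of the sequence; since monotone continuation maps and monotone homotopies never decrease the action, $\Theta_b$ and all the homotopies entering its construction descend compatibly through these quotients, so $\{\Theta_b\}_b$ is a map of inverse systems of chain complexes $\{tel^*(h_{(-\infty,b)})\}_b\to\{tel^*(h'_{(-\infty,b)})\}_b$ that is a quasi-isomorphism at each level $b$. Applying the inverse-telescope functor $tel^*_{\leftarrow}$ and comparing the Milnor exact sequences recalled above by the five lemma then produces the required isomorphism $\Phi_{h\to h'}\co SH^*_{S(C)}(K,h)\xrightarrow{\ \sim\ }SH^*_{S(C)}(K,h')$.

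Finally, for closure under composition: given a third datum $h''$, both $\Phi_{h'\to h''}\circ\Phi_{h\to h'}$ and $\Phi_{h\to h''}$ are induced by zig-zags of truncated continuation maps from $h$ to $h''$, and any two such zig-zags at a fixed level $b$ are related by the level-$b$ chain homotopies of Proposition \ref{p:nonescape}; hence they agree on homology, and $\Phi_{h\to h}=\mathrm{id}$. The step I expect to be the main obstacle is exactly the one flagged in the Remark after Proposition \ref{p:nonescape}: because the triple-composite homotopies exist only when the truncation levels coincide, one cannot compare the completed objects directly, and must instead verify carefully at the chain level (Step 2) that all the comparison and homotopy data respect the $b$-action filtration, so that the level-$b$ comparisons assemble into an honest morphism of inverse systems; the remaining work — arranging the domination relations $h_n\leq h'_{\phi(n)}$ and checking that the two directions compose to the identity up to level-$b$ homotopy — is routine telescope bookkeeping.
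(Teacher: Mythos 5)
Your proposal follows the same route as the paper: a level-$b$ sandwiching comparison (via interleaved continuation maps and the triple-composite homotopies of Proposition~\ref{p:nonescape}, available precisely because all three truncations are at the same $b$), followed by passage to the homotopy inverse limit using the generalized Milnor exact sequence and the five lemma. You also correctly identify the structural reason the homotopy inverse limit is used — the inverse system of telescopes is not formed by mere truncations and need not be Mittag-Leffler, so one cannot argue directly with the ordinary $\varprojlim$ — which is exactly the point the paper flags.
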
 
\begin{proof}
    The same sandwiching argument \cite[Proposition 3.3.3]{Var21} applies here. The only difference is that we have to be careful about compactness near the concave end and, in particular, our inverse system is formed by telescopes that are not just truncations of each other. This inverse system does not satisfy the Mittag-Leffler property. Therefore, it is difficult to prove that the inverse limit of quasi-isomorphisms is a quasi-isomorphism. This is why we used the homotopy inverse limit in our definition, which is also as functorial as the usual inverse limit (more functorial in fact, but we don't need this for the purposes of this paper). As it immediately follows from the generalized Milnor exact sequence, the homotopy inverse limit of quasi-isomorphisms is automatically a quasi-isomorphism. We omit more details.
\end{proof}

We can also define restriction maps and prove that these restriction maps are closed under compositions with the preferred isomorphisms of the previous proposition using the well-known techniques used in \cite{Var21}. Hence, from now on we write $SH^*_{S(C)}(K)$ for $SH^*_{S(C)}(K,h)$ with any choice of $h.$

\begin{remark}
    We actually do not need the full strength of this independence of choices statement for the purposes of this paper, but it makes the proof cleaner, hence we include it.
\end{remark}

Next we show it enjoys the same properties of the symplectic cohomology with support where the ambient space is closed. We assume the reader is familiar to the original proofs in \cite{Var,Var21} and only indicate the necessary modifications.

\begin{proposition}
    If $K$ is displaceable in $S(C)$, then $SH^*_{S(C)}(K)=0$.
\end{proposition}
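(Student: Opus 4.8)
The plan is to adapt, with only cosmetic changes, the standard proof that a displaceable support kills symplectic cohomology with support in the setting of a symplectic manifold convex at infinity (\cite[Section 4.2]{Var}, \cite[Theorem 3.6]{BSV}); the one genuinely new point is to make sure that all the Floer-theoretic operations used in that argument remain well defined on the symplectization $S(C)$, which also has a concave end. Since $K$ is compact, we may fix a compactly supported Hamiltonian $F$ on $S(C)$ whose time-one flow $\phi=\phi^1_F$ satisfies $\phi(K)\cap K=\varnothing$; write $\rho$ for its oscillation norm. Starting from an acceleration datum $\{h_n\}$ for $K$ as above (admissible, slopes $m_n\to+\infty$ at the convex end, constant values larger than $b$ at the concave end), one uses $F$ via the usual conjugation-and-interpolation construction to produce a cofinal family whose continuation maps are, on each fixed action window $(-\infty,b)$, null-homotopic as soon as the Conley--Zehnder jump is large compared with $\rho$. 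The crucial observation is that, because $F$ is compactly supported, every Hamiltonian and every monotone homotopy entering this construction differs from an admissible one by a compactly supported term, and in particular has unchanged slope at the convex end and unchanged constant value at the concave end; hence Proposition~\ref{p:nonescape} --- that is, the SFT-compactness/no-escape argument of \cite[Proposition 9.17]{CO}, available thanks to the index-positivity of the Boothby--Wang form --- applies to all of them. So the truncated complexes, their differentials, continuation maps and chain homotopies all behave exactly as in the closed-ambient situation.

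Granting this, I would argue as follows. First, fix a truncation level $b$ and run the standard energy estimate: a Floer continuation cylinder between the relevant Hamiltonians contributing to a generator in the window $(-\infty,b)$ that survives the pertinent action bound would force $\phi(K)\cap K\neq\varnothing$, a contradiction; this shows that the telescope $tel^*(h_{(-\infty,b)})$ built from the modified cofinal family is acyclic in the relevant degrees. Then pass to the homotopy inverse limit over $b\to+\infty$: by the generalized Milnor sequence (\cite[Lemma A.7]{BSV}), the vanishing of each $H^*(tel^*(h_{(-\infty,b)}))$ --- which also kills the associated $\varprojlim^{1}$-term --- gives $H\bigl(tel^*_{\leftarrow}tel^*(h_{(-\infty,b)})\bigr)=0$, i.e. $SH^*_{S(C)}(K)=0$. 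Finally, the independence-of-choices statement established earlier lets us work with this specific acceleration datum with no loss of generality.

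The main obstacle is the bookkeeping of the action truncations through the homotopies used in the displacement argument. As flagged in the Remark after Proposition~\ref{p:nonescape}, the inverse system here is not a system of truncations of a single complex, and the identity ``a composition of continuation maps is homotopic to the direct continuation map'' can fail across \emph{different} truncation levels; one must therefore carry out every estimate at a single fixed $b$ --- where monotonicity of the interpolations supplies the required chain homotopies --- and take the inverse limit only afterwards. A second, routine point is to verify that the compactly supported modifications of the $h_n$ do not create $1$-periodic orbits or Floer trajectories escaping toward the concave end; this is again subsumed by Proposition~\ref{p:nonescape}.
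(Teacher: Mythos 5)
Your plan correctly identifies the main ingredients (twisting by the displacing Hamiltonian, action-truncation to keep Floer theory well defined near the concave end, and a Milnor-type argument at the end), but the middle step is off in a way that matters.

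You propose to fix a truncation level $b$, show that the telescope built from the \emph{twisted} family is acyclic, and then conclude $SH^*_{S(C)}(K)=0$ by passing to the homotopy inverse limit. The acyclicity of the twisted truncated telescope is indeed correct and is established in the paper (each continuation map raises the $\min$-action by at least a fixed positive amount, so the direct limit --- and hence the telescope --- vanishes in the fixed window $(-\infty,b)$). The gap is the last inference: the twisted Hamiltonians $h_n\bullet\phi$ are sandwiched between $h_n$ and $h_n+a\max\phi$, and therefore do \emph{not} monotonically approximate $\chi_K^\infty$; they are not an acceleration datum for $K$, and the inverse system of their truncated telescopes is not the system whose homotopy inverse limit defines $SH^*_{S(C)}(K)$. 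So knowing that each of those telescopes is acyclic does not, by itself, give the conclusion. What the paper actually shows is different: one sandwiches $tel^*(h_{(-\infty,b)})\to tel^*((h\bullet\phi)_{(-\infty,b)})\to tel^*((h+a\max\phi)_{(-\infty,b)})$ and uses the canonical identification $tel^*((h+a\max\phi)_{(-\infty,b)})\cong tel^*(h_{(-\infty,b-a\max\phi)})$ together with contractibility of Floer data to deduce that the \emph{transition} map $H(tel^*(h_{(-\infty,b)}))\to H(tel^*(h_{(-\infty,b-a\max\phi)}))$ in the honest inverse system for $SH^*_{S(C)}(K)$ vanishes. It is this pro-triviality of the inverse system (not the vanishing of each term) that, through the Milnor exact sequence, kills both $\varprojlim$ and $\varprojlim^1$ and yields $SH^*_{S(C)}(K)=0$. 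In general the fixed-$b$ truncated telescopes $tel^*(h_{(-\infty,b)})$ of the \emph{untwisted} family need not be acyclic even when $K$ is displaceable, so the sandwich-and-projection step cannot be skipped.

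Two smaller points: the phrase ``null-homotopic as soon as the Conley--Zehnder jump is large compared with $\rho$'' looks like a confusion --- the relevant estimate here is the \emph{action} shift produced by the monotone interpolations, not index considerations; and ``a Floer continuation cylinder \dots would force $\phi(K)\cap K\neq\varnothing$'' is not how the twisting argument runs: it proceeds by action estimates rather than by geometric intersection of $K$ with $\phi(K)$ along cylinders. Your observation that all Hamiltonians in the construction differ from admissible ones only by compactly supported terms, so that Proposition~\ref{p:nonescape} applies throughout, is correct and is exactly the new geometric input the paper relies on.
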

\begin{proof}
    We use the same twisting argument as in \cite[Section 4.2]{Var}. Since $K$ is compact, it can be displaced by a compactly supported Hamiltonian $\phi: S(C)\times [0,1]\to \mathbb{R}$. First consider a $b$-acceleration data $\{h_n\}$ consisting of typical $b$-admissible functions to construct $tel^*(h_{(-\infty,b)})$. We can assume the region where the $h_n$'s are constant at concave end or linear at convex end is far away from the support of $\phi$. Consider the twisting construction in \cite[Section 4.2.2]{Var}. For two Hamiltonian functions $h_n$ and $\phi$ we get a new Hamiltonian $h_n\bullet\phi:S(C)\times S^1\to \mathbb{R}$ such that $h_n$ is supported in $S(C)\times (0,1/2)$ and $\phi$ is supported in $S(C)\times (1/2,1)$. We can easily choose $h_n$'s so that the twisted functions $\{h_n\bullet \phi\}$ are $b$-admissible as well. We still have a well-defined Floer theory for $CF^*_{(-\infty, b)}(h_n\bullet\phi)$ and the Floer one-ray
    $$
    CF^*_{(-\infty, b)}(h_1\bullet\phi)\to CF^*_{(-\infty, b)}(h_2\bullet\phi)\to\cdots.
    $$
    By the topological energy estimate \cite[Proposition 4.2.7]{Var}, one can choose suitable homotopies in the above Floer one-ray such that the each continuation map raises the action at least $0.1$. Therefore the resulting telescope $tel^*((h\bullet\phi)_{(-\infty, b)})$ is acyclic.

    Without losing generality\footnote{Since $K$ is compact, we chose the displacing of $K$ via the isotopy of $\phi$ to be contained in a bigger compact region $W$. If $\min \phi<0$, then let $f:S(C)\to[0,+\infty)$ be a compactly supported non-negative Hamiltonian such that $f$ is constant and equals to $-\min \phi$ on an open neighbourhood of $W$. Denote by $f\#\phi$ the Hamiltonian that generates the composition of the isotopies of $f$ and $\phi$. Then, $f\# \phi$ displaces $K$, it is compactly supported, and $\min f\#\phi=0$.}, we assume that $\min\phi=0$. As in \cite[Proposition 4.2.3]{Var}, there exists a number $a\geq 1$ such that $h_n\bullet 0\leq h_n\bullet \phi\leq h_n\bullet 0+a\max\phi$ for all $n$. Consider the functions
    $$
    \{h_n\bullet 0\}_{n=1,2\cdots} \quad \text{and} \quad \{h_n\bullet 0+ a\max\phi\}_{n=1,2\cdots}, 
    $$where $0$ denotes the function that is identically $0.$
   Since the Floer theory of these functions are canonically identified with those of $h_n$ and $h_n+a\max\phi$ \cite[Lemma 4.2.1]{Var}, the telescopes fit into a sandwich 
    \begin{equation}\label{eq:sandwich}
        tel^*(h_{(-\infty, b)})\to tel^*((h\bullet\phi)_{(-\infty, b)})\to tel^*((h+a\max\phi)_{(-\infty, b)}).
    \end{equation}
    
    The middle telescope is acyclic as shown above. Hence the composition of these two maps induces a zero map on the homology level. On the other hand, there is a linear homotopy between $h_n$ and $h_n+a\max\phi$. This linear homotopy induces an identity map at the chain level, which only translates the action. It gives the projection map on homology
    \begin{equation}\label{eq:projection}
        H(tel^*(h_{(-\infty, b)}))\to H(tel^*(h_{(-\infty, b-a\max\phi)})),
    \end{equation}
    since the chain complexes $tel^*((h+a\max\phi)_{(-\infty, b)})$ and $tel^*(h_{(-\infty, b-a\max\phi)})$ are canonically identified.
    By the contractibility of Floer data, the composition of the two maps in $(\ref{eq:sandwich})$ induces the same map as in $(\ref{eq:projection})$. Therefore we proved that $(\ref{eq:projection})$ is the zero map. In particular, the argument shows that the projection map
    $$
    H(tel^*(h_{(-\infty, b)}))\to H(tel^*(h_{(-\infty, b-c)}))
    $$
    is zero for any $b$ and any constant $c$ which is larger than $a\max\phi$.

    Finally, in the definition of $SH^*_{S(C)}(K)$, we can chose a sequence $b_l$ such that $b_{l+1}-b_l>a\max\phi$ for any $l$. Then we get an inverse system with all maps $H(tel^*(h_{(-\infty, b_{l+1})}))\to H(tel^*(h_{(-\infty, b_l)}))$ being zero. The Milnor exact sequence tells us $SH^*_{S(C)}(K)=0$.
    
\end{proof}

\begin{proposition}\label{p:MV}
    Let $K_1,K_2$ be two compact subsets of $S(C)$ which are Poisson commuting. There is a Mayer-Vietoris exact sequence
    $$
    \cdots\to SH^k_{S(C)}(K_1\cup K_2)\to SH^k_{S(C)}(K_1)\oplus SH^k_{S(C)}(K_2)\to SH^k_{S(C)}(K_1\cap K_2)\to \cdots.
    $$
\end{proposition}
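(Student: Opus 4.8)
The plan is to adapt the proof of the Mayer--Vietoris property from \cite{Var21} to the present setting of compact supports inside the symplectization $S(C)$, with Proposition \ref{p:nonescape} doing the extra work needed because of the concave end. The first step is to choose acceleration data for the four supports simultaneously. Writing the Poisson commuting structure of $K_1,K_2$ from Definition \ref{d:weakly1} in terms of functions $f_{1,i},f_{2,i}$, one produces for each $n$ a pair of admissible Hamiltonians $h^1_n,h^2_n$ on $S(C)$ that Poisson commute on a neighbourhood of $K_1\cup K_2$, are constant near the concave end with constant value exceeding the truncation level $b$, and are linear of slope $m_n\to\infty$ near the convex end, and such that suitable regularizations of $\min(h^1_n,h^2_n)$, $h^1_n$, $h^2_n$, $\max(h^1_n,h^2_n)$ monotonically approximate from below $\chi^\infty_{K_1\cup K_2}$, $\chi^\infty_{K_1}$, $\chi^\infty_{K_2}$, $\chi^\infty_{K_1\cap K_2}$. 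The Poisson commuting hypothesis is exactly what permits the hypersurfaces $\{h^1_n=h^2_n\}$ separating the relevant regions to be chosen so that the integrated maximum principle of \cite{Var21} can be run along them.

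Next I would establish, for each fixed action-truncation level $b$, a short exact sequence of telescope complexes
\[
0 \to tel^*_b(K_1\cup K_2) \to tel^*_b(K_1)\oplus tel^*_b(K_2) \to tel^*_b(K_1\cap K_2) \to 0,
\]
where $tel^*_b(K)$ denotes the truncated telescope $tel^*(h_{(-\infty,b)})$ built from the chosen acceleration data for $K$. As graded $\Bbbk$-modules the middle term is the direct sum of the outer two: the generators of $tel^*_b(K_1\cup K_2)$ are the $b$-truncated orbits near the parts of $\partial K_1$ and $\partial K_2$ lying on $\partial(K_1\cup K_2)$, those of $tel^*_b(K_1\cap K_2)$ are the orbits near the parts lying on $\partial(K_1\cap K_2)$, and together they account for all orbits of $h^1_n$ and $h^2_n$. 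The content is that the Floer differentials and continuation maps respect this splitting: near each region the min (resp.\ max) Hamiltonian coincides with $h^1_n$ or $h^2_n$, and one shows that any Floer cylinder or continuation solution with endpoints in a region and energy at most $b$ stays inside it. Confinement has three inputs here --- the maximum principle at the convex end (valid since the slopes go to infinity and the almost complex structures are of contact type there), Proposition \ref{p:nonescape} at the concave end, and the integrated maximum principle of \cite{Var21} for the interior separating hypersurfaces.

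The last step is to pass to the homotopy inverse limit used to define $SH^*_{S(C)}(-)$. The short exact sequences above are compatible with the structure maps of the inverse systems over the truncation levels $b$, since those maps are composites of action-truncation projections and inclusions, which commute with the inclusion/projection maps of the Mayer--Vietoris triple. Applying the degree-wise product and then the mapping cone $tel^*_\leftarrow$, both of which are exact, yields a short exact sequence
\[
0 \to tel^*_\leftarrow(K_1\cup K_2) \to tel^*_\leftarrow(K_1)\oplus tel^*_\leftarrow(K_2) \to tel^*_\leftarrow(K_1\cap K_2) \to 0
\]
of the three homotopy inverse limits. Its long exact cohomology sequence, after the identifications of the cohomologies with $SH^*_{S(C)}(K_1\cup K_2)$, $SH^*_{S(C)}(K_1)\oplus SH^*_{S(C)}(K_2)$ and $SH^*_{S(C)}(K_1\cap K_2)$ supplied by the preceding propositions, is the asserted Mayer--Vietoris sequence; one checks that the first two maps are the restriction maps, as in \cite{Var21}.

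I expect the main obstacle to be the confinement of Floer solutions that makes the Floer-level short exact sequence well defined and exact. Relative to \cite{Var21}, where the ambient manifold is closed, one must now run the maximum principle at the convex end, the SFT-compactness argument of Proposition \ref{p:nonescape} at the concave end, and the integrated maximum principle along the interior hypersurfaces at the same time, and verify they are mutually compatible. A preliminary point, which I would settle first, is that the approximating Hamiltonians can be taken simultaneously admissible, Poisson commuting near $K_1\cup K_2$, and cofinal among such functions; this is where the precise form of Definition \ref{d:weakly1} is used.
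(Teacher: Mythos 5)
Your proposal diverges from the paper's argument in a way that creates a genuine gap. You claim there is a short exact sequence of truncated telescope complexes
\[
0 \to tel^*_b(K_1\cup K_2) \to tel^*_b(K_1)\oplus tel^*_b(K_2) \to tel^*_b(K_1\cap K_2) \to 0,
\]
with the middle term splitting as a direct sum of the outer two as graded modules, and you justify the compatibility of differentials by a confinement statement: ``any Floer cylinder or continuation solution with endpoints in a region and energy at most $b$ stays inside it,'' attributing this to the integrated maximum principle run along $\{h^1_n=h^2_n\}$. This is not true and is not what happens in \cite{Var21} either. The hypersurfaces $\{h^1_n=h^2_n\}$ are not of contact type, and even for the carefully chosen Poisson-commuting acceleration data, Floer trajectories and continuation solutions of positive topological energy do cross between the regions. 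Consequently there is no short exact sequence of Floer chain complexes, and no naive direct-sum splitting of the middle term as a complex.

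What the paper (following \cite{Var21}, \cite{Var}) actually does is form the double cone $Cone^2$ of the square of restriction maps
\[
CF^*_{(-\infty,b)}(H^{K_1\cup K_2}_{n,t})\to CF^*_{(-\infty,b)}(H^{K_1}_{n,t})\oplus CF^*_{(-\infty,b)}(H^{K_2}_{n,t})\to CF^*_{(-\infty,b)}(H^{K_1\cap K_2}_{n,t}),
\]
and prove that this $Cone^2$ is acyclic, which by \cite[Lemma 2.5.3]{Var} yields the long exact sequence. Acyclicity is established by splitting the total differential $\partial_n = \partial_n^0 + \partial_n^+$ according to topological energy: the zero-energy part $\partial_n^0$ is confined in the way you envision (this is where the Poisson-commuting condition and the local boundary accelerator construction of \cite[Sections 5.5--5.9]{Var} enter, and where the integrated maximum principle actually gets used), so that $(D_{n,<b},\partial_n^0)$ is an acyclic complex; the positive-energy contribution $\partial_n^+$ is then handled by a bounded spectral sequence over the energy filtration on each $D_{n,[a,b)}$, whose first page is computed with $\partial_n^0$ and hence vanishes. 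Your last step --- passing the short exact sequence through $\prod$ and $tel^*_\leftarrow$ --- would be fine if the input existed, but the input does not. To repair the argument you should replace the claimed short exact sequence with the $Cone^2$-acyclicity argument, carrying the energy filtration and spectral sequence along, and then observe that $Cone^2$ commutes with $tel^*_\leftarrow$ and invoke the generalized Milnor exact sequence as the paper does.
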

\begin{proof}
For a fixed $b$, consider $b$-acceleration data $\{H^A_{s,t}\}$ for any $A\in \{K_1,K_2,K_1\cup K_2,K_1\cap K_2\}$, such that $H^{A_1}_{n,t}\geq H^{A_2}_{n,t}$ whenever $A_1\subset A_2$. These data give a Floer three-ray (keep in mind Remark \ref{rem-parametrized-nonescape})
$$
\mathcal{F}_{<b}:= F_{1,<b}\to F_{2,<b}\to\cdots
$$ 
with
$$
\mathcal{C}^A_{<b}:= CF^*_{(-\infty,b)}(H^A_{1,t})\to CF^*_{(-\infty,b)}(H^A_{2,t})\to \cdots
$$
on the four infinite edges. Each slice $F_{n,<b}$ of this three-ray is a square which looks like
$$
F_{n,<b}=
\begin{tikzcd}
    CF^*_{(-\infty,b)}(H^{K_1\cup K_2}_{n,t}) \arrow[r, "r^1_n"] \arrow[d, "r^2_n"] \arrow[rd, "g_n"]
    & CF^*_{(-\infty,b)}(H^{K_1}_{n,t}) \arrow[d, "f^1_n"] \\
    CF^*_{(-\infty,b)}(H^{K_2}_{n,t}) \arrow[r, "f^2_n"]
    & CF^*_{(-\infty,b)}(H^{K_1\cap K_2}_{n,t})
\end{tikzcd}.
$$
Here all the arrows are restriction maps induced by the chosen homotopies. See \cite[Section 3.4]{Var21}. By using these maps, we can form the double cone $Cone^2$ of this square. The underlying complex of the double cone is
$$
\begin{aligned}
    & D_{n,<b}:= Cone^2(F_{n,<b})\\
    := & CF^*_{(-\infty,b)}(H^{K_1\cup K_2}_{n,t})\oplus CF^*_{(-\infty,b)}(H^{K_1}_{n,t})[1]\oplus CF^*_{(-\infty,b)}(H^{K_2}_{n,t})[1]\oplus CF^*_{(-\infty,b)}(H^{K_1\cap K_2}_{n,t}).
\end{aligned}
$$ 
Its differential is defined as
$$
\partial_n=
    \begin{bmatrix}
    d^{K_1\cup K_2}_n & 0 & 0 & 0\\
    -r^1_n & -d^{K_1}_n & 0 & 0\\
    r^2_n & 0 & -d^{K_2}_n & 0\\
    g_n & f^1_n & f^2_n & d^{K_1\cap K_2}_n\\
    \end{bmatrix}.
$$
Therefore we have a new Floer one-ray
$$
\mathcal{D}_{<b}:= D_{1,<b}\to D_{2,<b}\to \cdots.
$$

Next we show $Cone^2\circ tel^*(\mathcal{F}_{<b})$ is acyclic. Note that $Cone^2\circ tel= tel\circ Cone^2$. Hence it suffices to show $tel^*(\mathcal{D}_{<b})$ is acyclic. Since $tel^*(\mathcal{D}_{<b})$ is quasi-isomorphic to $\varinjlim_{n}D_{n,<b}$ and direct limit preserves exactness, we only need to show each $D_{n,<b}$ is acyclic.

The differential $\partial_n$ of $D_{n,<b}$ is a sum of Floer differentials and continuation maps, which both respect the symplectic action, since our homotopies are monotone. So there is a decomposition $\partial_n=\partial_n^0 +\partial_n^+$ where $\partial_n^0$ consists of the contributions from Floer trajectories with zero topological energy. By action considerations, $(\partial_n)^2=0$ implies $(\partial_n^0)^2=0$. We claim that there exist acceleration data $\{H^A_{s,t}\}$ such that  $(D_{n,<b}, \partial_n^0)$ is an acyclic complex. Under the Poisson commuting condition, such acceleration data have been explicitly constructed in \cite[Sections 5.5-5.9]{Var} when the ambient space is compact. However, the nontrivial part, construction of the compatible boundary accelerators, only happens locally near the boundaries of approximating domains of $K_1, K_2$. Hence we can repeat the construction inside a bounded domain, making sure that the functions are all equal near the boundary of this domain. It is easy to extend the functions so that they are admissible and equal outside of this domain. We can then do the relevant perturbations as in the closed case.

Next, given those acceleration data, there exists $\epsilon_n>0$ such that every entry in $\partial_n^+$ raises a positive action greater than $\epsilon_n$, because each $D_{n,<b}$ is a finite-dimensional module over the ground ring. Then for any $a\in\RR$, we consider $D_{n,[a,b)}$ as the subcomplex of $D_{n,<b}$ containing generators with action larger than or equal to $a$. Define a filtration on $D_{n,[a,b)}$ by
$$
F^l D_{n,[a,b)}:= D_{n,[a+l\epsilon_n,b)}, \quad l=0,1,\cdots.
$$
This is a bounded filtration on $D_{n,[a,b)}$, hence the induced spectral sequence converges to $H(D_{n,[a,b)})$. The first page of this spectral sequence is computed by using maps with zero topological energy. As mentioned above, just using those maps gives vanishing homology groups. So the first page of the spectral sequence is zero, which shows that the final page $H(D_{n,[a,b)})=0$. To see this, we consider the zero map from $D_{n,[a,b)}$ to a trivial complex, which contains a single element $\{0\}$ with $+\infty$ action in each degree. The zero map is a filtered map between two complete and exhaustive complexes. Then we use the Eilenberg-Moore Comparison Theorem 5.5.11 in \cite{We}. Since $D_{n,<b}$ is the direct limit of $D_{n, [a,b)}$, it is also acyclic.

By \cite[Lemma 2.5.3]{Var}, if
$$
Cone^2(tel^*_\leftarrow tel^*(\mathcal{F}_{<b}))
$$
is acyclic, then we get the desired Mayer-Vietoris exact sequence
$$
\cdots\to H(tel^*_\leftarrow tel^*(\mathcal{C}_{<b}^{K_1\cup K_2}))\to H(tel^*_\leftarrow tel^*(\mathcal{C}_{<b}^{K_1}))\oplus H(tel^*_\leftarrow tel^*(\mathcal{C}_{<b}^{K_2}))\to H(tel^*_\leftarrow tel^*(\mathcal{C}_{<b}^{K_1\cap K_2}))\to\cdots.
$$
Note that $Cone^2$ commutes with $tel^*_\leftarrow$, it is equivalent to show $tel^*_\leftarrow(Cone^2\circ tel^*(\mathcal{F}_{<b}))$ is acyclic. We already have that $Cone^2\circ tel^*(\mathcal{F}_{<b})$ is acyclic. The rest follows from the generalized Milnor exact sequence.

\end{proof}

When $K=\{1\}\times C$, our symplectic cohomology with support is isomorphic to the Rabinowitz Floer homology considered by Bae-Kang-Kim \cite{BKK}.

\begin{proposition}
    For any degree $k$, $SH^k_{S(C)}(K=\{1\}\times C)$ is isomorphic to the Rabinowitz Floer homology $SH_*(C)$ in \cite[Corollary 1.8]{BKK} at a certain degree $*$. This isomorphism holds over any coefficient.
\end{proposition}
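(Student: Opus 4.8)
The plan is to reduce the statement to a direct comparison of two almost identical constructions. By the independence-of-choices results established above, we may compute $SH^*_{S(C)}(\{1\}\times C)$ using precisely the acceleration data indicated after Proposition~\ref{p:nonescape}: a sequence of V-shaped admissible Hamiltonians $h_n$ that are linear of slope $m_n\to+\infty$ near the convex end, constant near the concave end with constant values diverging to $+\infty$, and that monotonically approximate from below the indicator $\chi^\infty_{\{r=1\}}$. For such a choice the slope-$m_n$, action-$b$ truncated complex $CF^*_{(-\infty,b)}(h_n)$ is, up to the grading conventions discussed below, exactly the truncated Rabinowitz Floer complex at slope $m_n$ used in \cite[Section~3.1]{BKK}: its differential, continuation maps and chain homotopies --- supplied by Proposition~\ref{p:nonescape} --- count the very same moduli spaces, and the index-positivity of the (perturbed) Boothby--Wang form, already invoked in the proof of Proposition~\ref{p:nonescape}, is what makes both theories well defined and rules out the same escaping SFT buildings. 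It therefore remains only to check that the two theories assemble the doubly-indexed family $\{CF^*_{(-\infty,b)}(h_n)\}_{n,b}$ into the same homology.

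Here is how I would organize that check. Our definition first forms the inner telescope $tel^*(h_{(-\infty,b)})$, a chain-level model for the homotopy colimit along continuation maps, so that $H(tel^*(h_{(-\infty,b)}))\cong\varinjlim_n H(CF^*_{(-\infty,b)}(h_n))$ since direct limits over a ring are exact; by cofinality of $m_n\to+\infty$ this is the slope-colimit of the $b$-truncated Floer cohomologies of \cite{BKK}. Our definition then applies $tel^*_\leftarrow$ as $b\to+\infty$, computing the homotopy inverse limit over the truncation window together with the generalized Milnor sequence (the lemma quoted from \cite[Lemma~A.7]{BSV}). The task is to identify this homotopy-limit-of-homotopy-colimit with the completion \cite{BKK} use to define $SH_*(C)$. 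I would do so by producing a morphism between the two towers of truncated colimits which is a quasi-isomorphism on each level, and then invoking the fact --- immediate from the Milnor sequence --- that homotopy inverse limits preserve level-wise quasi-isomorphisms; a sandwiching argument in the spirit of \cite[Proposition~3.3.3]{Var21} absorbs any discrepancy in the precise V-shaped profiles, homotopies and almost complex structures, and also disposes of the constant orbits near the concave end, which are excluded by our action truncations (the $h_n$ being constant there with value $>b$) and likewise do not contribute to \cite{BKK}'s complexes.

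The degree bookkeeping, though routine, should be recorded: our grading is cohomological, normalized by $i(\gamma,u)=\mathrm{CZ}(\gamma,u)+\tfrac12\dim M$, whereas \cite{BKK} grade Rabinowitz Floer homology homologically, so the isomorphism matches $SH^k_{S(C)}(\{1\}\times C)$ with $SH_*(C)$ for $*$ an explicit affine function of $k$ depending only on $\dim C$ and the two normalizations; since $[\omega]$ and $c_1$ both vanish on $S(C)$ we have $\Lambda=\Bbbk$, so there is no Novikov variable to carry along. Because $D$ is monotone with minimal Chern number at least $2$, all moduli spaces involved are regular for generic data and both theories are defined by counting them over $\Bbbk$ with no virtual perturbation; the identification is thus chain-level and valid over an arbitrary commutative ring $\Bbbk$.

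I expect the genuine difficulty to be the comparison of the two limiting procedures in the second paragraph rather than anything geometric. One must pin down exactly which order of direct and inverse limits (or which mapping-cone / Tate-type model) \cite{BKK} adopt for $SH_*(C)$ and verify it is quasi-isomorphic to our homotopy limit of telescopes; the homotopy-limit formalism of \cite[Appendix~A.4]{BSV} together with a careful cofinality argument for the slopes should make this rigorous, but it is the step that needs to be written out with care.
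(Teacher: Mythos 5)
You correctly identify the easy half of the argument: with the V-shaped admissible Hamiltonians of Section~\ref{sec-quantum-proof}, the truncated complexes $CF^*_{(-\infty,b)}(h_n)$, their differentials, and the continuation maps are literally the same moduli counts that appear in Bae--Kang--Kim, so the comparison does reduce to matching the limiting procedures. But your proposed resolution of that matching --- build a level-wise quasi-isomorphism of towers and invoke preservation under homotopy inverse limit --- does not close the gap, because the two sides are not both homotopy inverse limits of chain complexes. Our definition is $H(\operatorname{tel}^*_\leftarrow \operatorname{tel}^*(h_{(-\infty,b)}))$, whereas the Rabinowitz Floer homology of \cite{CO,BKK} is built in the order $\varinjlim_{a}\varprojlim_{b}\varinjlim_{n} H(CF_{*,[a,b)}(h_n))$, with homology taken \emph{inside} the inverse limit over $b$. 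Passing between ``$\varprojlim$ of homology'' and ``homology of $\varprojlim$'' is exactly where a $\varprojlim^1$ obstruction lives, and no amount of cofinality or sandwiching removes it in general; this is the step your write-up flags as needing care but does not actually supply.

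What the paper uses, and what is missing from your proposal, is a geometric input special to the Boothby--Wang situation: for a fixed Conley--Zehnder index, the Reeb orbits of the (perturbed) Boothby--Wang contact form have uniformly bounded period, so the generators of the complex in any fixed degree have uniformly bounded action (see \cite[Equation~(4.27)]{BKK}). Consequently, in each fixed degree $k$ the action window $[a,b)$ stabilizes once $a$ is small and $b$ is large: $\varinjlim_a$ and $\varprojlim_b$ become eventually constant, the Mittag--Leffler condition holds trivially, $\varprojlim^1$ vanishes, and both sides collapse to $H(\operatorname{tel}^k(h_{(-\infty,b_0)}))$ for a single $b_0$. This stabilization is what makes the two constructions agree degree-by-degree over any coefficient ring; without it your level-wise comparison has no leverage on the tower over $b$. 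You should replace the final paragraph's ``careful cofinality argument'' by this uniform-action-bound argument, since the issue is geometric after all, not purely formal.
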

\begin{proof}
    As we mentioned above, in this case our Hamiltonian functions are the same as theirs, see Remark \ref{r:Morse-Bott} below. For a fixed Conley-Zehnder index, all the Reeb orbits of the Boothby-Wang contact form have a bounded period. As we explain in what follows, this enables us to compute the Rabinowitz Floer homology as
    $$
    SH_*(C)= H(\varprojlim_b\varinjlim_a\varinjlim_n CF_{*,[a,b)}(h_n)).
    $$ 
    Recall that the Hamiltonian orbits we use to define $SH_*(C)$ are all at the bottom level of the V-shaped Hamiltonians. Hence bounded Reeb periods imply that their symplectic actions are uniformly bounded. See \cite[Equation (4.27)]{BKK} and its explanation. In particular, when $b$ is large and $a$ is small enough, the first two limits just stabilize for a fixed degree. Therefore we get 
    $$
    SH_*(C)= H(\varprojlim_b\varinjlim_a\varinjlim_n CF_{*,[a,b)}(h_n))= H(\varinjlim_n CF_{*,[a_0,b_0)}(h_n))\cong H(tel^k(h_{(-\infty, b_0)}))
    $$
    for large $b_0$ and small $a_0$, since telescope is quasi-isomorphic to the direct limit. The degree shift between $*$ and $k$ is due to different grading conventions.

    By the same reason, the projection map $H(tel^k(h_{(-\infty, b')}))\to H(tel^k(h_{(-\infty, b)}))$ stabilizes when $b'>b>b_0$. The Milnor exact sequence tells us $SH^k_{S(C)}(K)\cong H(tel^k(h_{(-\infty, b_0)}))$. Here, we used that $ \varprojlim_b{}\!\!^1 H(tel^k(h_{(-\infty, b)}))$ vanishes due to the Mittag-Leffler condition, which is satisfied because the projection map stabilizes.
\end{proof}

\begin{proof}
    [Proof of Theorem \ref{thm-prequantum}]
    If all fibers are contact displaceable, then they are displaceable in $S(C)$, viewed as subsets of $\{1\}\times C$. Hence we can cook up a finite Poisson commuting cover of $\{1\}\times C$ in $S(C)$, whose elements are compact and displaceable. By an iterated use of the Mayer-Vietoris property we get that $SH^*_{S(C)}(\{1\}\times C)=0$. This implies the Rabinowitz Floer homology $SH_*(C)=0$. Hence, the Gysin long exact sequence from \cite[Corollary 1.8]{BKK} implies that $I_\sigma$ is an isomorphism. This contradicts the assumptions of the theorem and finishes the proof.
\end{proof}

\begin{remark}\label{r:Morse-Bott}
    Strictly speaking, Bae-Kang-Kim \cite{BKK} uses a Morse-Bott model for Hamiltonian Floer complexes, while the usual setup \cite{Var21} for symplectic cohomology with support uses non-degenerate Hamiltonians. Now these models can be easily translated between each other. For example, see \cite[Appendix A]{RZ} and references therein.
\end{remark}

\subsection{Proof of Theorem \ref{thm-elem-cont}}

In this section, we freely use the notation introduced in the statement of Theorem \ref{thm-elem-cont}.
View $G:=p^{-1}(L)$ as a principal circle bundle over $L$ with the flat connection $\beta:=\iota_G^*\alpha$. Identify $C$ with $\{1\}\times C$ in $S(C)$. Suppose that $\{1\}\times G$ is not Hamiltonian displaceable from itself in $S(C)$, but $Z$ is contact displaceable from $G$. Let $\phi$ be the time-$1$ map of a contact isotopy displacing $Z$ from $G$. It lifts to a Hamiltonian diffeomorphism $\Phi: S(C)\to S(C)$. Let $N$ be an open neighborhood of $Z$ in $C$ such that $\phi(N)$ is disjoint from $G$. Therefore, $\Phi((0,\infty)\times N)$ does not intersect $(0,\infty)\times G$. By Lemma \ref{l:conformal}, there exists a number $A$ such that $\Phi(\{r\}\times C)\subset (0,e^A r)\times C$ for every $r>0$. 

Let us denote the principal circle bundle $p\mid_G: G\to L$ by $\pi.$

\begin{lemma}
    There is a smooth function $f: G\to \mathbb{R}$ such that
    \begin{itemize}
        \item $f$ is equal to its minimal value $-1$ on $G\setminus N$,
        \item $f$ is invariant under parallel transport of the connection $\beta$, and
        \item for all $b\in L$, the integral of $f\beta$ on $p^{-1}(b)$ is zero.
    \end{itemize}  
\end{lemma}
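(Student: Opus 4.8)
Since $L$ is Lagrangian and $d\alpha=p^*\omega_D$, one computes $d\beta=\iota_G^*p^*\omega_D=(p|_G)^*(\iota_L^*\omega_D)=0$, so $\beta$ is a \emph{flat} connection on the circle bundle $G\to L$. The plan is to exploit this flatness together with the existence of the compact Legendrian $Z$ to see that the horizontal foliation of $G$ is a fibration over a circle, and then to build $f$ by pulling back a function from that circle. First I would record the standard consequences of flatness: $\mathcal H:=\ker\beta$ is an integrable hyperplane distribution on $G$ (Frobenius, since $\beta\wedge d\beta=0$), so $G$ is foliated by horizontal leaves, each of which is a covering space of $L$ via $p$; and ``invariant under parallel transport of $\beta$'' means exactly ``constant along each horizontal leaf''.

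The key step is to show that the holonomy of $\beta$ is \emph{finite}. Because $Z$ is Legendrian, $TZ\subset\xi\cap TG=\mathcal H$, and a dimension count ($\dim Z=\dim L=\operatorname{rank}\mathcal H$) gives $TZ=\mathcal H|_Z$; since $Z$ is compact, each of its (finitely many) components is a compact horizontal leaf. Such a leaf meets a fiber $p^{-1}(b)$ in a single orbit of the holonomy group $\operatorname{Hol}(b)\subset S^1$, and this orbit is finite (it is a fiber of the covering $\mathrm{leaf}\to L$); as $S^1$ acts freely on $p^{-1}(b)$, this forces $\operatorname{Hol}(b)=\ZZ/k$ for some fixed $k\geq 1$, independent of $b$. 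Hence the quotient of the free $\ZZ/k$-action on $G$ carries a flat connection with trivial holonomy, so $G/(\ZZ/k)\cong L\times S^1$ and the induced map $q\co G\to S^1$ is a smooth fiber bundle whose fibers are the horizontal leaves; moreover $q|_{p^{-1}(b)}\co p^{-1}(b)\to S^1$ is a $k$-fold covering for every $b$. Functions on $G$ constant along leaves are precisely those of the form $\bar f\circ q$ with $\bar f\co S^1\to\RR$.

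It remains to choose $\bar f$. Since $Z=q^{-1}(q(Z))$ with $q(Z)$ finite, compactness of $Z$ gives an open set $J\subset S^1$ with $q(Z)\subset J$, $\overline{J}\neq S^1$, and $q^{-1}(J)\subset N\cap G$; then $G\setminus N\subset q^{-1}(S^1\setminus J)$, so the first property holds as soon as $\bar f\equiv -1$ on $S^1\setminus J$ and $\bar f\geq -1$ everywhere. Invariance under parallel transport is automatic for $f=\bar f\circ q$. For the third property, a direct computation --- trivialising $\beta$ along a fiber by the $S^1$-angle coordinate and using that $q$ restricts to a $k$-fold cover --- shows $\int_{p^{-1}(b)}f\beta=\lambda\int_{S^1}\bar f$, where $\lambda=\beta(V)\neq 0$ is the constant value of $\beta$ on the fundamental vector field $V$ of the $S^1$-action; this is uniform in $b$, so the third property is equivalent to $\int_{S^1}\bar f=0$.

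Finally I would take $\bar f$ to equal $-1$ on $S^1\setminus J$ and to exceed $-1$ inside $J$ by a smooth nonnegative bump whose total integral over $S^1$ equals the length of $S^1$; then $\bar f\geq -1$ with equality precisely on the nonempty set $S^1\setminus J$, and $\int_{S^1}\bar f=0$, so $f:=\bar f\circ q$ is smooth with all three properties. The only substantive point is the middle step: that the presence of the compact Legendrian lift $Z$ forces the flat connection $\beta$ to have finite holonomy, so that the horizontal foliation of $G$ collapses to a circle fibration; everything else is either formal or an elementary one-variable construction.
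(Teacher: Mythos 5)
Your proof is correct, and it rests on the same key observation as the paper's: the Legendrian lift $Z$ is a compact integral submanifold of the horizontal distribution $\ker\beta$, and since $p|_Z\colon Z\to L$ is a finite covering, the flat connection $\beta$ on $G\to L$ has finite holonomy $\mathbb{Z}/k$. The paper then constructs $f$ fiberwise: pick a $\mathbb{Z}/k$-invariant function on one circle fiber $p^{-1}(b_0)$ with the required integral and support properties, and parallel transport it around $L$ — well-definedness is exactly the finite-holonomy statement. You instead make the global leaf space explicit: you show $G/(\mathbb{Z}/k)\cong L\times S^1$ (trivial because the quotient connection has trivial holonomy), obtain a submersion $q\colon G\to S^1$ whose fibers are the horizontal leaves, and pull back a one-variable bump. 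These are two packagings of the same flatness/finite-holonomy argument; yours is more explicit about the global picture and makes the final integral identity $\int_{p^{-1}(b)}f\beta=\lambda\int_{S^1}\bar f$ particularly transparent, at the modest cost of needing to verify that the $q$-fibers are single leaves (which your degree count does) and implicitly assuming $L$ connected (handled by arguing per component otherwise).
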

\begin{proof}
Let $b$ be an arbitrary point in $L$. Because the connection $\beta$ is flat, the parallel transport maps on $\pi:G\to L$ depend only on the homotopy classes of paths relative to their boundary. Because $Z$ is tangent to the horizontal subbundle of the connection and $Z\to L$ is a finite covering map, by the path lifting property for covering spaces (and the definition of parallel transport maps), we see that the image of the holonomy homomorphism $\pi_1(L,b)\to S^1$ has finite image. Let us denote the image by $hol_b$. 

Let $c$ be the circle obtained by taking the quotient of  $\pi^{-1}(b)$ under the action of $hol_b$. Let $U\subset c$ be an open arc such that when we take its preimage in $\pi^{-1}(b)$ and parallel transport to all fibers, it stays inside $N$. Such a $U$ exists due to the compactness of $L.$ Let $t$ be an angular coordinate on $c$ such that the pullback under the quotient map of $dt$ is $\beta\mid_{\pi^{-1}(b)}$. 

    We choose a smooth function on $c$ that is equal to its minimum value $-1$ outside of $U$ and whose integral is $0$ with respect to the measure $dt$. We then lift this function to $\pi^{-1}(b)$ by precomposing with the quotient map and parallel transport it to all the other fibers. This is a smooth function because our principal circle bundle with its connection is locally trivial and smoothness is a local property. This finishes the construction of the desired $f$. 
\end{proof}

A direct consequence is that $f\beta$ represents the zero class in $H^1(G;\RR)$, which now we explain. Since $f$ is invariant under parallel transport, the one-form $f\beta$ is closed. To see this note that $$d(f\beta)=df\wedge \beta+fd\beta.$$ 
At any point in $G$, the tangent space splits into a one-dimensional vertical space and a horizontal space. If we pair $d(f\beta)$ with two vectors, then the
first term vanishes because both $df$ and $\beta$ vanish on horizontal tangent vectors and the vertical tangent space is one-dimensional. The second term vanishes because $\beta$ is closed. 

In order to finish the proof of the claim, we need to show that for any loop $\gamma: S^1\to G$, we have $\int_\gamma f\beta=0$. Define $\tau:= \pi\circ \gamma$ and consider the pullback principal circle bundle $\tau^*\pi$ over $S^1$ with the pullback connection $\tilde{\beta}.$ Let $\tilde{f}$ be the pullback of $f$ to the total space of $\tau^*\pi$. By construction, $\tau^*\pi$ has section that gives $\gamma$ after postcomposition to the canonical map to $G.$ It suffices to show that the closed $1$-form $\tilde{f}\tilde{\beta}$ (because it is the pullback of a closed one form) is exact. By the integral $0$ property of $f$, we see that the integral over a fiber is $0$. By taking the preimage of $Z$, we also see that $\tau^*\pi$ admits a parallel multisection. A connected component of the multisection (oriented arbitrarily) has homology class that is not a multiple of the fiber class. The form $\tilde{f}\tilde{\beta}$ vanishes identically when restricted to this component, finishing the proof of exactness.


\begin{proof}
[Proof of Theorem \ref{thm-elem-cont}]

For $t\in [0, 1)$, consider the submanifolds 
$$
G_t:=\{(r,x)\mid x\in G, r=1+tf(x)\}\subset S(C).
$$
Since $G_0$ is a Lagrangian submanifold and $f\beta$ is closed, each $G_t$ is a Lagrangian submanifold (cf. \cite[Section 2.2]{EHS}). Indeed, denoting by $j_t:G\to S(C)$ the ``parametrization'' $j_t(x):=(1+tf(x), x)$ of $G_t$, the pull-back of the symplectic form $\omega=d(r\alpha)$ from $S(C)$ via $j_t$ is equal to
\[j_t^\ast\omega= dj_t^\ast(r\alpha) = d(r\circ j_t\cdot \beta) = d((1+tf(x))\cdot \beta) = d\beta +td(f\beta).\] 
Now, $d\beta=0$ because $\beta$ is a flat connection on $G$ (this, on the other hand, follows from $L$ being a Lagrangian) and $t d(f\beta)=0$ because $f\beta$ is closed. Hence, $j_t^\ast\omega=0$ and $G_t$ is a Lagrangian submanifold (here, we also used $2\dim G_t=\dim S(C)$). 
Moreover, we know that $f\beta$ is exact because it represents the zero class in $H^1(G;\RR)$. Since
\[ \omega(\partial_tj_t, dj_t)= (dr\wedge\alpha + rd\alpha)(f(x)\partial_r, dj_t(x)) = f\cdot j_t^\ast\alpha = f\beta \]
(and since $f\beta$ is exact), by \cite[Lemma 2.3]{AS}, we can find a Hamiltonian isotopy $\Psi_t$ that generates the family restricted to any closed interval $[0,t_0]$ with $t_0<1$.

For $1>t_0>1-e^{-A}$, for the $A$ that was chosen above, we have
$$
G_{t_0}\subset ((0,e^{-A})\times C)\cup ((0,\infty)\times N).
$$  
If $x\in G_{t_0}\cap ((0,\infty)\times N)$, then $\Phi(x)$ does not intersect $(0,\infty)\times G$ because $\phi$ displaces $N$ from $G$. If $x\in G_{t_0}\cap ((0,e^{-A})\times C)$ then $\Phi(x)$ does not intersect $\{1\}\times G$ by the  choice of $A$ (recall the last sentence of the first paragraph of this section). Therefore $\Phi(\Psi_{t_0}(\{1\}\times G))$ is disjoint from $\{1\}\times G$, a contradiction.
\end{proof}

\appendix

\section{Comparison results}
\label{sec:comparison}

Symplectic cohomology with support unifies most of the existing symplectic cohomology theories by specializing the support set $K$. In this section, we prove several such comparison results in the most widely used case where the ambient symplectic manifold $M$ is a Liouville manifold. These results are known by the experts (essentially all going to back to \cite{Vit}), but we include them here for completeness as it can be hard to find definitive statements in the literature.

\subsection{Liouville subdomains}\label{sec:comparison-1}
We start with Liouville subdomains in Liouville manifolds. The following proposition is immediate from definitions.

\begin{proposition}\label{p:rel2vit}
    For a Liouville manifold $M$ that is the symplectic completion of a Liouville domain $\bar{M}$, the symplectic cohomology with support $SH^*_M(\bar{M})$ is isomorphic to the Viterbo symplectic cohomology $SH^*(M)$ over any coefficient.
\end{proposition}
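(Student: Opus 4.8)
The plan is to unwind the two definitions and observe that they are computed by the same cofinal family of Hamiltonians, with matching continuation maps, so that the statement is indeed ``immediate from definitions.'' Throughout I use that $M=\bar{M}\cup_{\partial\bar{M}}([1,\infty)\times\partial\bar{M})$ is the symplectic completion of $\bar{M}$, with Liouville coordinate $r$ on the cylindrical end.

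First I would choose convenient acceleration data for $SH^*_M(\bar{M})$. Take Hamiltonians $H_n$ that are $C^2$-small and negative on $\bar{M}$ with $H_n\nearrow 0$ pointwise on $\bar{M}$, and that on the cylindrical end are linear of the form $m_n r+c_n$ with slopes $m_n\to\infty$; interpolate them by monotone homotopies and choose contact-type almost complex structures at infinity. As recorded in Section~\ref{sec:def-sh}, we are free to impose $m_n\to\infty$, and then the whole construction is controlled by ordinary maximum-principle arguments. Such a sequence $\{H_n\}$ is precisely a cofinal family among the admissible Hamiltonians used in Viterbo's definition of $SH^*(M)$ --- those which are negative on $\bar{M}$ and linear with positive slope at infinity, ordered by slope --- and the monotone continuation maps $CF^*(H_n)\to CF^*(H_{n+1})$ appearing in the telescope are exactly the Viterbo continuation maps. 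Hence $tel^*(\mathcal{C})$ before completion is a standard chain-level model for the homotopy colimit of $CF^*(H_n)$, and since filtered colimits are exact its homology is $\varinjlim_n HF^*(H_n)=SH^*(M)$.

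Next I would check that the action-filtration completion built into the definition of $SH^*_M(-)$ is harmless in the Liouville case. Because $\omega$ is exact, the symplectic-area term in $\mathcal{A}(\gamma,u)$ is independent of the cap $u$ (two caps differ by a spherical class $a$ with $\omega(a)=0$), so the action filtration on each $CF^*(H_n)$ is genuinely a filtration by orbits and the $\Lambda$-coefficients play no role. For a cofinal family with $m_n\to\infty$ the structure maps $H^k(tel^*(\mathcal{C})_{(-\infty,b')})\to H^k(tel^*(\mathcal{C})_{(-\infty,b)})$ stabilise in each fixed degree as $b'$ grows (orbits of bounded action all appear at a finite stage and are acted on only by finitely many continuation maps), so the relevant $\varprojlim^1$ term vanishes and the Milnor sequence identifies $H^*(\widehat{tel^*}(\mathcal{C}))$ with $\varinjlim_n HF^*(H_n)$. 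Finally, since $\omega$ vanishes on $\pi_2(M)$ no virtual perturbations are needed, so the identification holds over an arbitrary commutative ring $\Bbbk$, and one checks on chain level that it intertwines the restriction maps of $SH^*_M(-)$ with Viterbo transfer maps, as asserted in property~(2) of the introduction.

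The argument involves essentially no hard analysis --- it is bookkeeping --- so there is no genuine obstacle. The point requiring the most care is the third step: reconciling the action-truncation/completion in the definition of $SH^*_M(-)$ with the naive direct limit defining Viterbo's $SH^*$, i.e.\ verifying the vanishing of the $\varprojlim^1$ term for the chosen cofinal family; and, more routinely, matching grading, sign, and homotopy conventions so that the isomorphism produced is the canonical one.
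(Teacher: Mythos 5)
Your overall approach is the one the paper has in mind, and your two main observations---that the uncompleted telescope built from Hamiltonians that are $C^2$-small on $\bar M$ and linear of slope $m_n\to\infty$ at infinity is a chain-level model for the direct limit $\varinjlim_n HF^*(H_n)=SH^*(M)$, and that since $\omega$ vanishes on $\pi_2(M)$ the action filtration lives entirely on the orbits and no Novikov completion enters---are correct and are exactly what makes the paper regard this as ``immediate.''

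The one place where the reasoning is off is the justification for why the action-filtration completion is harmless. You argue that the structure maps $H^k\bigl(tel^*(\mathcal{C})_{(-\infty,b')}\bigr)\to H^k\bigl(tel^*(\mathcal{C})_{(-\infty,b)}\bigr)$ stabilize ``because orbits of bounded action all appear at a finite stage and are acted on only by finitely many continuation maps,'' and then invoke the Milnor sequence. That parenthetical addresses the direct-limit variable $n$ (the slope), not the inverse-limit variable $b$ (the action cutoff), and does not actually explain the stabilization. The cleaner and correct observation---which the paper makes explicit in the proof of the locality isomorphism, Proposition~\ref{p:locality}, when it notes that ``since all lower orbits of $F_{n,t}$ have negative action, we have $\widehat{tel^*}(\mathcal{C}_-)=tel^*(\mathcal{C}_-)$''---is that for the acceleration data used here \emph{every} $1$-periodic orbit has non-positive action: constant orbits have action $H_n(p)<0$, and orbits near $\partial\bar M$ have action given by the (negative) $y$-intercept of the tangent line to the radial profile. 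Hence $tel^k(\mathcal{C})_{\geq b}=0$ for all $b>0$, so $tel^k(\mathcal{C})_{(-\infty,b)}=tel^k(\mathcal{C})$ for $b>0$ and $\widehat{tel^k}(\mathcal{C})=\varprojlim_b tel^k(\mathcal{C})_{(-\infty,b)}=tel^k(\mathcal{C})$ on the nose. No Milnor sequence or $\varprojlim^1$ discussion is needed; the completion map is the identity at the chain level. With that replacement your argument is complete and matches the paper's intent.
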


Next we want to prove the following locality result. 
\begin{proposition}\label{p:locality}
    For any Liouville subdomain $\bar{N}$ of a Liouville manifold $M$, there is an isomorphism, called the locality isomorphism,
    $$
    I_{NM}: SH^*_M(\bar{N})\to SH^*(N),
    $$
    where $N$ is the Liouville completion of $N$ and $SH^*(N)$ is the symplectic cohomology defined by Viterbo \cite{Vit,Se}. This isomorphism holds over any coefficient ring $\Bbbk$.
\end{proposition}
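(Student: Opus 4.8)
The plan is to build the locality isomorphism $I_{NM}$ by comparing two acceleration data computations: one that computes $SH^*_M(\bar N)$ inside the ambient Liouville manifold $M$, and one that computes the Viterbo symplectic cohomology $SH^*(N)$ on the completion $N$ of $\bar N$. The key point is that both invariants are computed by telescopes of Floer complexes of Hamiltonians that vanish (or are very small and negative) on $\bar N$ and grow in the complement, and the $1$-periodic orbits that contribute at any fixed finite action truncation all lie in a fixed compact neighborhood of $\bar N$, where $M$ and $N$ agree as Liouville domains after a suitable identification. Concretely, I would first fix a Liouville embedding identifying a neighborhood $[1,1+\epsilon)\times\partial\bar N$ in $M$ with the corresponding collar in $N$, and then choose, for the $SH^*_M(\bar N)$ side, an acceleration data $\{H_n\}$ whose members are $C^2$-small negative on $\bar N$, of controlled slope on the collar, and then extended arbitrarily (but monotonically) to all of $M$ with slopes going to infinity.

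Second, I would observe that for each fixed action window $[a,b)$ and each fixed $n$, a maximum principle / no-escape argument — of exactly the type already used in the body of the paper (e.g.\ the compactness input behind Proposition~\ref{p:nonescape}, and the standard arguments in \cite{Var21,GV}) — confines all relevant $1$-periodic orbits and all Floer trajectories between them to a compact region $W$ that sits inside both $M$ and $N$ under the fixed collar identification. Hence the truncated Floer complexes $CF^*_{[a,b)}(H_n)$ and their continuation maps, computed in $M$, literally coincide with the corresponding complexes computed in $N$ for a matching acceleration data $\{H_n'\}$ on $N$ (choosing $H_n'$ to agree with $H_n$ on $W$ and be linear of the same slope beyond). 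Passing to the telescope and completing, this gives a chain-level identification inducing the desired isomorphism $I_{NM}\colon SH^*_M(\bar N)\to SH^*(N)$. I would then invoke the independence-of-acceleration-data statement (\cite[Proposition 6.5]{GV} / \cite[Theorem 4.17]{GV2}, and \cite[Proposition 3.3.3]{Var21}) to see that $I_{NM}$ is independent of all the choices, and check compatibility with restriction maps using the functoriality already cited in the text, so that the composite $SH^*_M(M)\to SH^*_M(\bar N)\xrightarrow{I_{NM}} SH^*(N)$ matches Viterbo transfer as claimed in property (2) of the introduction.

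The main obstacle, and the step I would spend the most care on, is the \emph{no-escape/confinement} argument: one must rule out Floer trajectories (and, in the limit, broken configurations) that wander out of $\bar N$ into the part of $M$ that is \emph{not} part of $N$. On $N$ this is controlled by the contact-type condition at the ideal boundary and the standard integrated maximum principle, but in $M$ the Hamiltonian $H_n$ in the region $M\setminus(\text{collar of }\bar N)$ is only required to be monotone and eventually linear, not of any special form, so I would need to arrange the $H_n$ on the collar $[1,1+\epsilon)\times\partial\bar N$ to have a long interval of constant small negative slope (smaller than the minimal Reeb period) that acts as a "neck", and then apply the integrated maximum principle of \cite[Lemma 2.3]{CO} (cf.\ the no-escape lemma in \cite{Var21}) on this neck to show no orbit or trajectory at action level $<b$ can cross it. This requires the usual compatibility of the almost complex structures with the collar (of contact type there) and a careful bookkeeping of how large the slope and how long the neck must be as a function of the truncation level $b$; once this is in place the rest is formal telescope algebra and an appeal to the invariance statements already recorded in the paper. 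A secondary, more routine point is checking that the two completions (degree-wise action completion of the telescope) match, which follows since the truncated pieces are literally identified.
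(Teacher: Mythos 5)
Your overall strategy---pick acceleration data whose members are small on $\bar N$, identify the resulting Floer data with Viterbo's on the completion $N$, and argue away what lies outside---is the right one and is essentially what the paper does. The genuine gap is in the step where you claim that at a fixed action window $[a,b)$ the truncated complexes $CF^*_{[a,b)}(H_n)$ in $M$ \emph{literally coincide} with the matching complexes in $N$, with the maximum principle confining ``all relevant $1$-periodic orbits.'' The maximum principle (integrated or not) controls \emph{Floer trajectories}, not the set of $1$-periodic orbits. For $H_n$ in any acceleration data converging to $\chi_{\bar N}^\infty$, the function must be large but finite on $M\setminus N$ for small $n$, and there $H_n$ has many constant $1$-periodic orbits (plus nonconstant ones near the outer ``turning point'' of the profile) whose actions land in $[a,b)$ when $n$ is small relative to $b$. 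These have no counterpart in the $N$-side complex, so the two truncated complexes are simply not equal for small $n$, and no no-escape lemma rescues this. (Your reference to Proposition~\ref{p:nonescape} is also off target: that argument concerns SFT compactness against a concave end in a prequantization symplectization, which is a different issue.)

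What the paper actually does is build the Hamiltonian profile $f_n$ so that it is linear of slope $c_n\notin \operatorname{Spec}(\alpha)$ on the collar and then turns over at a radius $R_n$ chosen large enough (via the $y$-intercept computation) that every ``upper'' orbit of $F_{n,t}$ has action $\geq n$; the slopes are large positive and go to infinity, not ``small negative'' as you wrote. The upper orbits then generate a \emph{subcomplex} $\mathcal{C}_+$ of the telescope, giving a short exact sequence $0\to\widehat{tel}(\mathcal{C}_+)\to\widehat{tel}(\mathcal{C})\to\widehat{tel}(\mathcal{C}_-)\to 0$; since continuation maps raise action and the upper actions diverge, $\widehat{tel}(\mathcal{C}_+)$ is acyclic (cite \cite[Lemma 5.3]{DGPZ}), so the quotient $\widehat{tel}(\mathcal{C}_-)$ computes $SH^*_M(\bar N)$. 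Only \emph{then} does one invoke the cylindrical-collar maximum principle (\cite[Lemma 2.2]{CO}) to show that the differentials and continuation maps among \emph{lower} orbits stay inside $N$, identifying $\mathcal{C}_-$ with the Viterbo complex $\mathcal{C}'$. If you want to rescue your version, you would have to discard the first $\lceil b\rceil$ stages of the telescope before truncating at $b$ and then handle the resulting $b$-dependent inverse system with care (as the paper does elsewhere via homotopy inverse limits); as written, the ``literal identification'' claim does not hold, and this is precisely what the short exact sequence argument is there to replace.
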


Suppose that $M$ is the symplectic completion of a Liouville domain $\bar{M}$ and let $\lambda$ be the Liouville form on $M$. Without losing generality, we assume that $\bar{N}\subset \bar{M}$ and $\alpha:=\lambda\mid_{\partial\bar{N}}$ is a non-degenerate contact form. Since $\bar{N}$ is a Liouville subdomain, the restriction of $\lambda$ to $\bar{N}$ makes it into a Liouville domain and there is a symplectic embedding of its symplectic completion $N$ into $M$. By the non-degeneracy condition, let the action spectrum of $\alpha$ be
$$
Spec(\alpha)= \{s_1, s_2, \cdots\}, \quad 0<s_1<s_2<\cdots, \quad \lim_{i}s_i= +\infty.
$$
Pick a sequence of numbers $c_i$ such that $s_{i}< c_i<s_{i+1}$ for all $i\geq 1$. We will construct Hamiltonian functions that compute $SH^*_M(\bar{N})$.

\begin{figure}
  \begin{tikzpicture}[yscale=0.8]
  \draw [->] (0,0)--(6,0);
  \draw (1,0)--(2,1)--(5,1);

  \draw (1,0)--(3,4)--(5,4);

  \draw [dotted] (2,1)--(2,0);
  \node [below] at (2,0) {$R_{n}$};
  \draw [dotted] (3,0)--(3,4);
  \node [below] at (3,0) {$R_{n+1}$};
  \node [below] at (1,0) {$1$};
  \node [right] at (6,0) {$r$};
  \node [right] at (5,1) {$f_{n}$};

  \node [right] at (5,4) {$f_{n+1}$};

  \end{tikzpicture}
  \caption{Hamiltonian functions in the cylindrical coordinate.}\label{fig:Ham}
\end{figure}
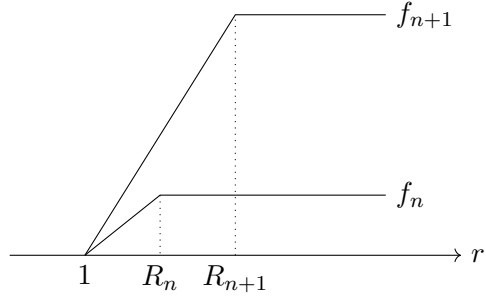

Fix a small number $\epsilon>0$. For any number $R_1>1$, consider a smooth function $f_1: [0,+\infty)\to \RR$ such that
\begin{enumerate}
    \item $f_1(r)$ is a negative constant when $r<1-\epsilon$.
    \item $f_1'(r)\geq 0$ for all $r$.
    \item $f_1(r)= c_1r$ when $1+\epsilon<r<R_1-\epsilon$.
    \item $f_1(r)$ is convex when $1-\epsilon<r<1+\epsilon$.
    \item $f_1(r)$ is concave when $R_1-\epsilon<r<R_1+\epsilon$.
    \item $f_1(r)$ is a constant when $r>R_1 +\epsilon$.
\end{enumerate}
Then view $f_1$ as a Hamiltonian function on $[1, +\infty)\times \partial\bar{N}$. The one-periodic orbits of $f_1$ fall into two groups: those in $\{r<1+\epsilon\}$ which are called lower orbits, and those in $\{r>R_1-\epsilon\}$ which are called upper orbits.

By Viterbo's $y$-intercept computation, the lower orbits of $f_1$ all have negative actions. The action of upper orbits is bounded below by
$$
-s_1R_1 +c_1(R_1-1)= R_1(c_1-s_1)- c_1
$$
up to $\epsilon$. Since $c_1> s_1$, we can choose $R_1$ large enough such that the above action is larger than one. This complete the construction of $f_1$. Next we construct $f_2$ in a similar way, such that 
\begin{enumerate}
    \item $f_2$ satisfies $(1)-(6)$ for $f_1$ for some $R_2>R_1$, replacing $c_1$ by $c_2$ in $(3)$.
    \item $f_2\geq f_1$.
    \item $R_2$ is chosen that all upper orbits of $f_2$ have action greater than two.
\end{enumerate}
Repeating this process we construct a sequence $\{ f_n\}$ such that
\begin{enumerate}
    \item $f_n$ satisfies $(1)-(6)$ for $f_1$ for some $R_n>R_{n-1}$, replacing $c_1$ by $c_{n}$ in $(3)$.
    \item $f_{n}\geq f_{n-1}$.
    \item $f_n$ converge to zero on $\bar{N}$ and diverge to positive infinity outside $\bar{N}$.
    \item $R_n$ is chosen that all upper orbits of $f_n$ have action greater than $n$.
\end{enumerate}
A rough depiction of $f_n$ is in Figure \ref{fig:Ham}. The functions $f_n$ are constant outside a compact subset of $N$, so we can extend it to $M$ constantly.

Finally we perturb $f_n$ into a non-degenerate Hamiltonian $F_{n,t}$ on $M$ such that
\begin{enumerate}
    \item Outside some $\bar{M}\cup_{\partial\bar{M}}([1,C]\times \partial\bar{M})$, $F_{n,t}$ only depends on the cylindrical coordinate, and it has small slope.
    \item The non-constant one-periodic orbits of $F_{n,t}$ come from the non-constant one-periodic orbits of $f_n$ by using a time-dependent perturbation to break the $S^1$-symmetry. $F_{n,t}$ does not have non-constant periodic orbits other than those ones.
    \item Constant orbits of $F_{n,t}$ are obtained by adding $C^2$-small Morse functions to $f_n$.
    \item $F_{n,t}\leq F_{n+1}$, and $F_{n,t}$ converge to zero on $\bar{N}$ and diverge to positive infinity outside $\bar{N}$.
\end{enumerate}
The perturbations above can be chosen as small as needed. Hence all one-periodic orbits of $F_{n,t}$ also fall into two groups: lower orbits which are in $\{r<1+\epsilon\}$ and upper orbits which are in $\{r>R_n+\epsilon\}$. Moreover, the lower orbits all have negative actions and the upper orbits of $F_{n,t}$ have actions larger than $n$.

\begin{proof}[Proof of Proposition \ref{p:locality}]
    Choosing monotone homotopies connecting $F_{n,t}$ and $F_{n+1,t}$, we get a Floer one-ray
    $$
    \mathcal{C}= CF^*(F_{1,t})\to CF^*(F_{2,t})\to\cdots.
    $$
    Let $CF_+(F_{n,t})$ be the free $\ZZ$-module generated by upper orbits of $F_{n,t}$. Since the Floer differential does not decrease action, it is a subcomplex of $CF(F_{n,t})$. We write $CF_-(F_{n,t}):= CF(F_{n,t})/CF_+(F_{n,t})$. Then there are two more Floer one-rays
    $$
    \mathcal{C}_+= CF^*_+(F_{1,t})\to CF^*_+(F_{2,t})\to\cdots, \quad \mathcal{C}_-= CF^*_-(F_{1,t})\to CF^*_-(F_{2,t})\to\cdots.
    $$
    One can check that they induce an exact sequence
    $$
    0\to \widehat{tel^*}(\mathcal{C}_+)\xrightarrow{i} \widehat{tel^*}(\mathcal{C})\xrightarrow{p} \widehat{tel^*}(\mathcal{C}_-)\to 0.
    $$
    The continuation maps in $\widehat{tel^*}(\mathcal{C}_+)$ raise the action of any fixed generator to positive infinity, since any upper orbit of $F_{n,t}$ has action at least $n$. Therefore $\widehat{tel^*}(\mathcal{C}_+)$ is acyclic and $p$ is an quasi-isomorphism, see \cite[Lemma 5.3]{DGPZ}.

    By linearly extending the lower part of $F_{n,t}$, we get a sequence of Hamiltonians $F_{n,t}'$ on $N$, which gives us a telescope $tel^*(\mathcal{C}')$. The classic symplectic cohomology of $N$ can be computed as $SH^*(N)= H(tel^*(\mathcal{C}'))$. On the other hand, since all lower orbits of $F_{n,t}$ have negative action, we have $\widehat{tel^*}(\mathcal{C}_-)= tel^*(\mathcal{C}_-)$.
    
    The underlying generators of $tel^*(\mathcal{C}_-)$ can be identified with the generators of $tel^*(\mathcal{C}')$. Moreover, we can choose the almost complex structures in the cylindrical region in $N$ to be also cylindrical. Then the maximum principle tells us the images of all Floer differentials and continuation maps are in $N$, see Lemma 2.2 in \cite{CO}. Hence the differentials and continuation maps in $tel^*(\mathcal{C}_-)$ are identified with those in $tel^*(\mathcal{C}')$. In conclusion, we have 
    $$
    SH^*_M(\bar{N})= H(\widehat{tel^*}(\mathcal{C}))\cong H(\widehat{tel^*}(\mathcal{C}_-))= H(tel^*(\mathcal{C}_-))= H(tel^*(\mathcal{C}'))=  SH^*(N).
    $$
    We write $I_{NM}$ as the map induced by the quasi-isomorphism $p$.
\end{proof}

Next we show the isomorphism $I_{NM}$ is compatible with Viterbo restriction maps.

\begin{figure}
  \begin{tikzpicture}[yscale=0.6]
  \draw [->] (0,0)--(9,0);
  \draw (0,-0.2)--(1,-0.2);
  \draw (1,-0.2)--(5,4);
  \draw (5,4)--(7,4)--(8,4.5);
  \draw (0,-0.5)--(4,-0.5)--(6,1.5)--(7,1.5)--(8,2);

  \node [right] at (8,4.5) {$G_{n,t}$};
  \node [right] at (8,2) {$F_{n,t}$};
  \draw [dotted] (1,-1)--(1,3);
  \draw [dotted] (4,-1)--(4,3);
  \draw [dotted] (7,-1)--(7,4);
  \node [below] at (1,-1) {$\partial\bar{W}$};
  \node [below] at (4,-1) {$\partial\bar{N}$};
  \node [below] at (7,-1) {$\partial\bar{M}_C$};
  \end{tikzpicture}
  \caption{Hamiltonian functions for restriction maps.}

  \label{fig:res}
\end{figure}
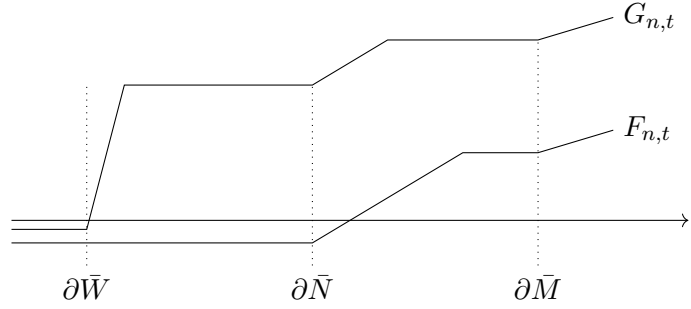

\begin{proposition}\label{p:restriction}
    Let $\bar{N}$ be a Liouville subdomain of $\bar{M}$, and let $\bar{W}$ be a Liouville subdomain of $\bar{N}$. We have the following commutative diagram
    $$
    \begin{tikzcd}
    SH^*_M(\bar{N}) \arrow[r, "I_{NM}"] \arrow[d, "r"]
    & SH^*(N) \arrow[d, "r_V"] \\
    SH^*_M(\bar{W}) \arrow[r, "I_{WM}"]
    & SH^*(W)
    \end{tikzcd}.
    $$
    The left vertical map is the restriction map between symplectic cohomology with support, the right vertical map is the restriction map defined by Viterbo \cite{Vit}.
\end{proposition}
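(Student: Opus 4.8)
The plan is to prove the square commutes already at the chain level, by computing all four of its maps from a single compatible family of Hamiltonians on $M$ that refines the locality data used in the proof of Proposition~\ref{p:locality}. First I would fix, exactly as there, an acceleration datum $\{F_{n,t}\}$ on $M$ computing $SH^*_M(\bar N)$: each $F_{n,t}$ is $C^2$-small on $\bar N$, rises with slope $c_n^N\to\infty$ in an outward collar of $\partial\bar N$, and is capped off to be constant past some large radius $R_n$, so that its $1$-periodic orbits split into \emph{lower} orbits near $\partial\bar N$ (action $\le 0$) and \emph{upper} orbits near radius $R_n$ (action $>n$, by the $y$-intercept estimate). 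Recall that $I_{NM}$ is then the isomorphism induced by the quotient projection of the telescope onto the subquotient spanned by the lower orbits, which a maximum principle identifies with the telescope on $N$ computing $SH^*(N)$.

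Next I would choose an acceleration datum $\{G_{n,t}\}$ computing $SH^*_M(\bar W)$ that refines $\{F_{n,t}\}$, with the profile of Figure~\ref{fig:res}: $G_{n,t}$ is $C^2$-small on $\bar W$, rises with a much larger slope $c_n^W$ in a collar of $\partial\bar W$ up to a plateau of height $h_n$ on $\bar N\setminus\bar W$, and equals $F_{n,t}+h_n$ on all of $M\setminus\operatorname{int}\bar N$; one takes $h_n\to\infty$ much faster than $c_n^N$ and arranges $F_{n,t}\le G_{n,t}$, so that the associated monotone continuation maps $CF^*(F_{n,t})\to CF^*(G_{n,t})$ induce the restriction map $r$. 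The orbits of $G_{n,t}$ now fall into three groups: $W$-orbits near $\partial\bar W$ (action $\le 0$), $N$-orbits near $\partial\bar N$ (action $\approx h_n-c_n^N\to+\infty$), and $M$-orbits near radius $R_n$ (action $>n+h_n$). Since continuation maps do not decrease action, the $N$- and $M$-orbits span a subcomplex, and because their actions diverge with $n$ its telescope is acyclic; hence $I_{WM}$ is the isomorphism induced by the quotient projection onto the subquotient spanned by the $W$-orbits, identified by a maximum principle with $SH^*(W)$. That this agrees with the locality isomorphism built from a standard two-step Hamiltonian for $\bar W$ follows from the independence of $SH^*_M(-)$ of the acceleration datum.

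With these choices the square commutes on the chain level. Because the upper orbits of $F_{n,t}$ have action $>n$ and, for $n$ large, the only orbits of $G_{n,t}$ of action $>n$ are $N$- and $M$-orbits, the continuation map sends the upper subcomplex of $F_{n,t}$ into the $(N\cup M)$-subcomplex of $G_{n,t}$, and therefore descends to a map $\bar c_n$ from the lower complex of $F_{n,t}$ to the $W$-complex of $G_{n,t}$, compatibly with the projections defining $I_{NM}$ and $I_{WM}$. Finally, on $M\setminus\operatorname{int}\bar N$ the homotopy from $F_{n,t}$ to $G_{n,t}$ can be taken to add only the constant $h_n$, so it does not change the Hamiltonian vector field there, and an integrated maximum principle (with cylindrical almost complex structures outside $\bar N$, as in Proposition~\ref{p:locality}) confines the continuation trajectories relevant to $\bar c_n$ -- which run between a $\partial\bar N$-orbit and a $\partial\bar W$-orbit, both lying in $\bar N$ -- to the completion $N$ of $\bar N$. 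Thus $\bar c_n$ is computed entirely inside $N$ and is, by inspection, the standard chain-level model of the Viterbo transfer $SH^*(N)\to SH^*(W)$; passing to telescopes and homology yields the commutative square with bottom map equal to $r_V$, the agreement with Viterbo's definition being classical (cf.\ \cite{Vit,CO}).

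The main obstacle will be the simultaneous bookkeeping in the second and third steps: one must choose the slopes $c_n^N, c_n^W$, the plateau heights $h_n$ and the radii $R_n$ so that the $y$-intercept computations make the $N$- and $M$-orbits of $G_{n,t}$ have action tending to $+\infty$ (so they are killed when forming $I_{WM}$) and also have action $>n$ for large $n$ (so the upper subcomplex of $F_{n,t}$ maps into them), while keeping $F_{n,t}\le G_{n,t}$ and the homotopies monotone; and then one must check that the descended map $\bar c_n$ genuinely \emph{is} Viterbo's transfer and not merely some transfer-type map, which is precisely where the maximum-principle confinement and the standard invariance statements do the work.
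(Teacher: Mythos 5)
Your proposal is correct and takes essentially the same route as the paper: both build a three-tier family $\{G_{n,t}\}$ refining $\{F_{n,t}\}$, observe that action monotonicity forces the continuation map to send upper orbits of $F$ into the $N$- and $M$-orbits of $G$ so that it descends to the subquotients defining $I_{NM}$ and $I_{WM}$, and then identify the descended map (confined to $N$ by the maximum principle) with Viterbo's transfer. Your write-up is somewhat more explicit about the slope/plateau bookkeeping and about why the resulting quotient map coincides with the Viterbo transfer, but the underlying argument is the same.
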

\begin{proof}
    Let $\{F_{n,t}\}$ be the Hamiltonian functions used in the locality isomorphism for $\bar{N}$ in Proposition \ref{p:locality}. We will construct a family of Hamiltonian functions $\{G_{n,t}\}$, which will be used to study the relation between locality isomorphisms and restriction maps. See Figure \ref{fig:res} for a pictorial depiction.\footnote{There the graph of $G_{n,t}$ is a bit misleading. On $\bar{N}\setminus (\bar{N}\cap  W_{C'})$, it is extended by constants.}

    Since $\bar{W}$ is a Liouville subdomain of $\bar{N}$, there is a tower of symplectic embeddings $W\to N\to M$ of complete Liouville manifolds. We use the functions $f_n$'s in Proposition \ref{p:locality} to construct non-degenerate Hamiltonian functions $G_{n,t}$ on $M$ such that
    \begin{enumerate}
        \item $G_{n,t}\geq F_{n,t}$.
        \item $G_{n,t}$ is $C^2$-small in $\bar{W}$.
        \item $G_{n,t}$ only depends on the cylindrical coordinate of $W$ on $W\setminus\bar{W}$. 
        \item $G_{n,t}$ only depends on the cylindrical coordinate of $M$ outside some $M_{C}:=\bar{M}\cup_{\partial\bar{M}}([1,C]\times \partial\bar{M})$, and $G_{n,t}$ has a small slope there.
    \end{enumerate}
    The construction of $G_{n,t}$ is similar to the construction of $F_{n,t}$. Let $W_{C'}:=\bar{W}\cup_{\partial\bar{W}}([1,C']\times \partial\bar{W})$. First, we use $f_n$ on the pair $(W_{C'},\bar{W})$ to get a function which is $C^2$-small in $\bar{W}$, cylindrical on $W_{C'}\setminus \bar{W}$, and has zero slope at $\partial W_{C'}$. The number $C'$ will be chosen large enough for later action considerations. Particularly, $W_{C'}$ may not be contained in $\bar{N}$ or $\bar{M}$. However, $W_{C'}$ is contained in $M_C$ for some large $C$. Next we extend the function from $W_{C'}$ to $M_C$ by constants, and then perturb. The Liouville form $\lambda$ is assumed to be a non-degenerate contact from on $\partial\bar{W}$. Hence we can choose the slope of $G_{n,t}$ properly such that there is a neck region near $\partial\bar{W}$ where $G_{n,t}$ does not have one-periodic orbits, similar to the construction of $F_{n,t}$. All one-periodic orbits of $G_{n,t}$ which are in $\partial\bar{W}$ are called lower orbits, and all others are called upper orbits. Then similar to the construction of $F_{n,t}$, we further assume that
    \begin{enumerate}
        \item All upper orbits of $G_{n,t}$ have action larger than $n$, and all lower orbits of $G_{n,t}$ have negative action.
        \item $G_{n,t}\leq G_{n+1,t}$ for all $n$.
        \item $G_{n,t}$ converges to zero on $\bar{W}$ and diverge to positive infinity outside $\bar{W}$.
    \end{enumerate}
    This uses that the whole $W$ is embedded in $M$, so the neck parameter $C'$ could be chosen large enough. Note that the upper orbits consist of non-constant orbits near $\partial W_{C'}$ and constant orbits outside $W_{C'}$. The former has large action by the long neck computation, and the later has large Hamiltonian values. Then choosing monotone homotopies between $G_{n,t}$ and $G_{n+1,t}$, we get a Floer one-ray 
    $$
    \mathcal{G}= CF^*(G_{1,t})\to CF^*(G_{2,t})\to\cdots
    $$
    and $H(\widehat{tel^*}(\mathcal{G}))= SH^*_{M}(\bar{W})$. 

    By the assumption of actions of upper orbits, there is a projection map $p_W$ which gives the locality isomorphism $I_{WM}$ in the homology level, see Proposition \ref{p:locality}. We have three more maps
    $$
    \begin{aligned}
        & p_N: \text{the projection map for $\bar{N}$ given by the funtions $F_{n,t}$},\\
        & r: \text{the chain level restriction map from $\widehat{tel^*}(\mathcal{C})$ to $\widehat{tel^*}(\mathcal{G})$},\\
        & r_V: \text{the chain level Viterbo restriction map from $\widehat{tel^*}(\mathcal{C}')$ to $\widehat{tel^*}(\mathcal{G}')$}.
    \end{aligned}
    $$
Here $\mathcal{C}'$ is obtained by linearly extending the lower parts of Hamiltonian functions in $\mathcal{C}$, and $\mathcal{G}'$ is obtained by linearly extending the lower parts of Hamiltonian functions in $\mathcal{G}$, as in the end of proof of Proposition \ref{p:locality}. The resulting two Floer one-rays $\mathcal{C}'$ and $\mathcal{G}'$ are on $N$ and $W$ respectively. The restriction map $r$ is defined by choosing monotone homotopies between $F_{n,t}$ and $G_{n+1,t}$ and counting Floer continuation maps. Pick an element $x\in \widehat{tel^*}(\mathcal{C})$, we decompose it into lower generators and upper generators $x=x_- +x_+$. Similarly, write $r(x_-)=a_- +a_+, r(x_+)=b_+$. Then $p_W(r(x))= a_-$. On the other hand, the Viterbo restriction map $r_V$ is defined as the composition
    $$
    \widehat{tel^*}(\mathcal{C}') \cong \widehat{tel^*}(\mathcal{C}_-) \xrightarrow{r} \widehat{tel^*}(\mathcal{G}) \xrightarrow{p_W} \widehat{tel^*}(\mathcal{G}_-) \cong \widehat{tel^*}(\mathcal{G}').
    $$
    More precisely, by couting Floer continuation maps, we have $r: \widehat{tel^*}(\mathcal{C}) \to \widehat{tel^*}(\mathcal{G})$, and the Viterbo restriction map is the induced map between two quotients, see Section 2 in \cite{Vit}. Hence we get $r_V(p_N(x))=r_V(x_-)=a_-=p_W(r(x_-))$.

\end{proof}

\subsection{Filled Liouville cobordisms}\label{sec:comparison-2}

Now we compare our invariant with the \textit{symplectic homology of filled Liouville cobordisms} by Cieliebak-Oancea \cite{CO}. The proof uses the same strategy as above. We first setup nice acceleration data then apply homological algebra, to commute homology with limits.

\begin{definition}
    A \textit{Liouville cobordism} $(\bar{W}, \bar{\eta})$ is a compact manifold with boundary $\bar{W}$, equipped with a one-form $\bar{\eta}$ which has the following two properties. First, $\bar{\omega}=d\bar{\eta}$ is a symplectic form on $\bar{W}$. Second the vector field $V$, the Liouville vector field determined by $\bar{\omega}(V, \cdot)= \bar{\eta}$ is transversal to $\partial \bar{W}$. The component of $\partial \bar{W}$ where $V$ points outward is called the positive (convex) end of $\bar{W}$, and the component of $\partial \bar{W}$ where $V$ points inward is called the negative (concave) end of $\bar{W}$. Particularly, a Liouville domain is a Liouville cobordism without negative ends.
\end{definition}

\begin{definition}
    A filled Liouville cobordism is a Liouville cobordism $(\bar{W}, \bar{\eta})$ together with a Liouville domain $(\bar{M}, \bar{\lambda})$, such that $\partial_+(\bar{M}, \bar{\lambda})= \partial_-(\bar{W}, \bar{\eta})$. Hence we can glue them to get a larger Liouville domain $\bar{M}\cup \bar{W}$.
\end{definition}

Let $(\bar{W}, \bar{\eta})$ be a Liouville cobordism with a Liouville filling $(\bar{M}, \bar{\lambda})$. We glue them together to get a larger Liouville domain $(\bar{Y}, \bar{\tau})$. Let $(Y, \tau)$ be the completion of $(\bar{Y}, \bar{\tau})$ and consider a family of admissible Hamiltonian functions $H_n$ on $Y$, such that
\begin{enumerate}
    \item $H_n\leq H_{n+1}$ for all $n$.
    \item $H_n$ converges to zero on $\bar{W}$ and diverges to infinity on $Y-\bar{W}$.
\end{enumerate}
See Figure \ref{fig:filled cob}.

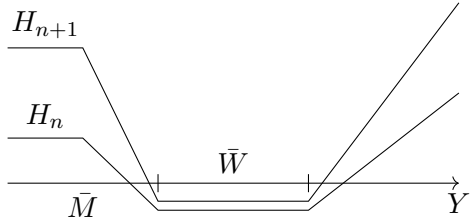
\begin{figure}
  \begin{tikzpicture}[yscale=0.6]

  \draw [->] (0,0)--(6,0);
  \draw (2,0.2)--(2,-0.2);
  \draw (4,0.2)--(4,-0.2);
  \draw (0,3)--(1,3)--(2,-0.4)--(4,-0.4)--(6,4);
  \draw (0,1)--(1,1)--(2,-0.6)--(4,-0.6)--(6,2);

  \node [above] at (0.5,3) {$H_{n+1}$};
  \node [above] at (0.5,1) {$H_n$};

  \node [above] at (3,0) {$\bar{W}$};
  \node [below] at (6,0) {$Y$};
  \node [below] at (1,0) {$\bar{M}$};

  \end{tikzpicture}
  \caption{Hamiltonian functions for a filled cobordism.}\label{fig:filled cob}
\end{figure}

\begin{definition}[Definition 2.8 in \cite{CO}]\label{d:cob}
    The symplectic homology of the Liouville cobordism $(\bar{W}, \bar{\eta})$ with a Liouville filling $(\bar{M}, \bar{\lambda})$ is defined as
    $$
    SH_*(W):= \varinjlim_{a\to -\infty}\varprojlim_{b\to +\infty}\varinjlim_{n\to \infty} H(CF_{*,[a,b)}(H_n)).
    $$
\end{definition}

Our grading and action functional conventions are the same as in \cite[Section 3.1]{Var}, which are different from those in \cite{CO}. The above definition has been translated into our action convention. What is important is the order of taking limits: one first takes direct limit of the slope of the Hamiltonian, then takes inverse limit on action upper bound and then takes direct limit on action lower bound.

\begin{proposition}\label{p:filled}
    Over any field $\KK$, the symplectic homology of the filled cobordism $SH_*(W;\KK)$ is isomorphic to $SH^*_Y(\bar{W};\KK)$ up to a grading shift.
\end{proposition}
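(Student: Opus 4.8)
The plan is to run the same template as the proof of Proposition \ref{p:locality}: build an acceleration datum adapted to the glued domain $\bar Y=\bar M\cup\bar W$, split off an acyclic ``upper'' piece, and then recognise the remaining chain complex as the one underlying Cieliebak--Oancea's Definition \ref{d:cob}, so that the only genuine work is the reorganisation of limits. Concretely, let $(Y,\tau)$ be the completion of $(\bar Y,\bar\tau)$ and choose admissible Hamiltonians $\{H_n\}$ as in Definition \ref{d:cob} ($C^2$-small on $\bar W$, increasing, linear of slope $m_n\to\infty$ near $\partial_+\bar W$, and constant of value $\to+\infty$ on $\bar M$), but, exactly as in the construction preceding Proposition \ref{p:locality}, with the turning radii $R_n$ chosen so large that every $1$-periodic orbit lying outside a fixed compact neighbourhood of $\bar W$ --- the constants in the interior of $\bar M$ and the orbits over $\partial_+\bar W$ at radius $\approx R_n$ --- has action $>n$; call these the \emph{upper} orbits, and call the remaining ones (the Morse orbits in $\bar W$ and the Reeb-type orbits just inside $\partial_\pm\bar W$) the \emph{lower} orbits, these being the only ones of bounded action for each fixed degree and action window. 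After a time-dependent perturbation and monotone homotopies we obtain a Floer one-ray $\mathcal C$ with $SH^*_Y(\bar W;\KK)=H(\widehat{tel^*}(\mathcal C))$.

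Since the Floer differential and the monotone continuation maps do not decrease action, the upper orbits span a sub-one-ray $\mathcal C_+\subset\mathcal C$, with quotient one-ray $\mathcal C_-$, and there is a short exact sequence $0\to\widehat{tel^*}(\mathcal C_+)\to\widehat{tel^*}(\mathcal C)\to\widehat{tel^*}(\mathcal C_-)\to 0$. As every upper orbit of $H_n$ has action $>n$, the continuation maps in $\mathcal C_+$ push the action of any fixed generator to $+\infty$, so $\widehat{tel^*}(\mathcal C_+)$ is acyclic (cf. \cite[Lemma 5.3]{DGPZ}), giving $SH^*_Y(\bar W;\KK)\cong H(\widehat{tel^*}(\mathcal C_-))$. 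Now $\widehat{tel^k}(\mathcal C_-)=\varprojlim_b\, tel^k(\mathcal C_-)_{(-\infty,b)}$ with \emph{surjective} transition maps, so the Milnor exact sequence (Lemma quoted from \cite{BSV}) applies. Since homology commutes with filtered colimits and $tel^k(\mathcal C_-)_{(-\infty,b)}\simeq\varinjlim_n CF^k_{-,(-\infty,b)}(H_n)$, one gets $H^k(tel(\mathcal C_-)_{(-\infty,b)})\cong\varinjlim_a\varinjlim_n H^k(CF_{[a,b)}(H_n))$, where the superscript on $CF_-$ has been dropped because the upper orbits no longer lie in the window once $n>b$. Comparing with Definition \ref{d:cob}, and accounting for the grading shift between our conventions and those of \cite{CO} recorded before Proposition \ref{p:filled}, the isomorphism $SH_*(W;\KK)\cong SH^*_Y(\bar W;\KK)$ is reduced to two statements: (i) the $\varprojlim^1_b$-term of the Milnor sequence for $\varprojlim_b tel(\mathcal C_-)_{(-\infty,b)}$ vanishes, and (ii) the outer $\varinjlim_a$ may be interchanged with $\varprojlim_b$ in $\varinjlim_a\varprojlim_b\varinjlim_n H^k(CF_{[a,b)}(H_n))$.

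I expect (i) and (ii) to be the main obstacle. The input is the standard action-window stabilisation: for fixed degree $k$ and fixed $a<b$ the complex $CF^k_{[a,b)}(H_n)$ stabilises as $n\to\infty$ (increasing the slope creates no new lower orbit of action in $[a,b)$, since lower orbits near $\partial_+\bar W$ have action tending to $+\infty$ and those near $\partial_-\bar W$ have action tending to $-\infty$), and it stabilises to a complex that is finite-dimensional in each degree; so $G_{a,b}:=\varinjlim_n H^k(CF_{[a,b)}(H_n))$ is finite-dimensional, which already forces the inverse system $\{G_{a,b}\}_b$ to be Mittag--Leffler and hence $\varprojlim^1_b G_{a,b}=0$. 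Pushing this through the colimit over $a$ --- so that $\varprojlim^1_b\big(\varinjlim_a G_{a,b}\big)=0$, which is exactly (i), and the interchange (ii) --- requires the uniform-action bookkeeping that Cieliebak--Oancea carry out to make Definition \ref{d:cob} well posed (choosing, for a class in $\varprojlim_b H^k(tel(\mathcal C_-)_{(-\infty,b)})$, a single lower action bound $a$ witnessing all of its truncations simultaneously), and one can either cite that argument or rerun it verbatim in the present setting; it is also the homological-algebra counterpart of the ``the first two limits just stabilise'' remark in the proof of $SH^k_{S(C)}(\{1\}\times C)\cong SH_*(C)$ above. Assembling these identifications yields $SH_*(W;\KK)\cong SH^*_Y(\bar W;\KK)$ up to the stated grading shift.
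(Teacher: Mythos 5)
Your proposal follows the same high-level strategy as the paper: construct a staircase acceleration datum on the completed glued domain $Y$ with upper orbits of action $>n$, then reduce the comparison between $SH^*_Y(\bar W;\KK)$ and $SH_*(W;\KK)$ to a limit-interchange problem attacked via finite-dimensionality and Milnor/Mittag--Leffler. The preliminary splitting into $\mathcal C_+$ and $\mathcal C_-$ is harmless but unnecessary --- the paper never performs it, because the action truncation $[a,b)$ automatically excises the upper orbits once $n>b$, so they simply drop out of the complexes that the argument manipulates.

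The real issue is the \emph{order} in which you apply the colimit $\varinjlim_a$ and the Milnor sequence. You take $\varinjlim_a$ first, so the $\varprojlim^1_b$-vanishing you then need is for the inverse system $\{\varinjlim_a G_{a,b}\}_b$. You correctly observe that $\varprojlim^1_b G_{a,b}=0$ for each fixed $a$ (since each $G_{a,b}$ is finite-dimensional, hence Mittag--Leffler), but this does \emph{not} give $\varprojlim^1_b(\varinjlim_a G_{a,b})=0$: $\varprojlim^1$ does not commute with filtered colimits, and $\varinjlim_a G_{a,b}$ can be infinite-dimensional because the orbits just inside $\partial_-\bar W$ have actions running off to $-\infty$ as the slope grows. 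You flag this as ``the main obstacle'' and defer to Cieliebak--Oancea's bookkeeping, but that deferral is precisely where the proof's content lives, so the step is not actually carried out.

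The paper sidesteps this by reversing the order of operations and keeping the limit interchange on the chain level. For \emph{each fixed} $a$ it applies the Milnor sequence to $\varprojlim_b\varinjlim_n CF_{*,[a,b)}(H_{n,t})$ (where finite-dimensionality of both the chain complex and its homology is available by Lemma~\ref{l:finite}), concluding $\varprojlim_b H(\varinjlim_n CF_{*,[a,b)})\cong H(\varprojlim_b\varinjlim_n CF_{*,[a,b)})$. It then applies $\varinjlim_a$, which is exact and so commutes with homology, to obtain $\varinjlim_a\varprojlim_b H(\varinjlim_n CF_{*,[a,b)})\cong H(\varinjlim_a\varprojlim_b\varinjlim_n CF_{*,[a,b)})$. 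Finally, the purely algebraic Lemma~\ref{l:quasi-comp} (citing [Var] for telescope $\simeq$ direct limit, and [CF] for the chain-level interchange $\varinjlim_a\varprojlim_b\cong\varprojlim_b\varinjlim_a$ of the action truncations of a single filtered complex) converts the right-hand side into $H(\widehat{tel^*}(\mathcal C))=SH^*_Y(\bar W;\KK)$. Reorganising your argument along these lines --- Milnor at fixed $a$, then $\varinjlim_a$, then the algebraic interchange at the chain level rather than the homology level --- closes the gap.
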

\begin{proof}
    Pick an acceleration datum $\{H_{n,t}\}$ to compute $SH^*_Y(\bar{W};\KK)$. We have
    $$
    SH_*(W;\KK)= \varinjlim_{a\to -\infty}\varprojlim_{b\to +\infty}\varinjlim_{n\to +\infty}H(CF_{*,[a,b)}(H_{n,t})).
    $$

    Now we show that for a special choice of $\{H_{n,t}\}$, several limits in the definition commute. We construct $H_{n,t}$ in the following way.
    \begin{enumerate}
        \item In the interior of $\bar{W}$, the functions $\{H_{n,t}\}$ are $C^2$-small negative Morse functions.
        \item Near $\partial_-(\bar{W}, \bar{\eta})$, pick a neighborhood $[1-\epsilon,1]\times\partial_-\bar{W}$. Consider a decreasing function $f_n$ in the collar coordinate that is concave on $[1-\epsilon,1-2\epsilon/3]\times\partial_-\bar{W}$, linear on $[1-2\epsilon/3,1-\epsilon/3]\times\partial_-\bar{W}$ and convex on $[1-\epsilon/3,1]\times\partial_-\bar{W}$. The slope of the linear part is not in $\pm Spec(\bar{\eta}\mid_{\partial_-\bar{W}})$. $H_{n,t}$ is a small perturbation of $f_n$ to break the $S^1$-symmetry.
        \item On the cylindrical region $[1,+\infty)\times\partial_+\bar{W}$ we use an increasing convex function which is linear at infinity.
        \item In $\bar{M}-([1-\epsilon,1]\times\partial_-\bar{W})$, the functions $H_{n,t}$ are Morse functions with small derivatives.
    \end{enumerate}
    See Figure \ref{fig:filled cob}. The one-periodic orbits of $H_{n,t}$ can be divided into upper and lower orbits, determined by their Hamiltonian values. By Viterbo's $y$-intercept formula, we can assume that the actions of upper orbits of $H_{n,t}$ are larger than $n$. Since direct limit commutes with homology, we have
    $$\varinjlim_{n\to +\infty}H(CF_{*,[a,b)}(H_{n,t}))\cong H(\varinjlim_{n\to +\infty}CF_{*,[a,b)}(H_{n,t})).
    $$ 
    For a special choice of $\{H_{n,t}\}$, Lemma \ref{l:finite} below shows that 
    $$
    \varinjlim_{n\to +\infty}CF_{*,[a,b)}(H_{n,t}), \quad H(\varinjlim_{n\to +\infty}CF_{*,[a,b)}(H_{n,t}))
    $$ are finite-dimensional vector spaces over $\KK$ for any $[a,b]$. The finite-dimensional condition on the chain level implies the Milnor exact sequence \cite[Theorem 3.5.8]{We}
    $$
    0\to \varprojlim_b{}^1 H(\varinjlim_{n\to +\infty}CF_{*,[a,b)}(H_{n,t}))\to H(\varprojlim_b \varinjlim_{n\to +\infty}CF_{*,[a,b)}(H_{n,t}))\to \varprojlim_b H(\varinjlim_{n\to +\infty}CF_{*,[a,b)}(H_{n,t}))\to 0.
    $$
    Moreoever, the finite-dimensional condition on the homology level shows that the $\varprojlim_b\nolimits^1$-term is zero \cite[Exercise 3.5.2]{We}. Hence we get
    $$
    \varprojlim_{b\to +\infty}H(\varinjlim_{n\to +\infty}CF_{*,[a,b)}(H_{n,t}))\cong H(\varprojlim_{b\to +\infty}\varinjlim_{n\to +\infty}CF_{*,[a,b)}(H_{n,t})).
    $$
    which gives
    $$
    \varinjlim_{a\to -\infty}\varprojlim_{b\to +\infty}H(\varinjlim_{n\to +\infty}CF_{*,[a,b)}(H_{n,t}))\cong H(\varinjlim_{a\to -\infty}\varprojlim_{b\to +\infty}\varinjlim_{n\to +\infty}CF_{*,[a,b)}(H_{n,t})).
    $$
    Then we use Lemma \ref{l:quasi-comp} and Equation $(\ref{eq:defSH2})$ below to obtain that
    $$
    \varinjlim_{a\to -\infty}\varprojlim_{b\to +\infty}H(\varinjlim_{n\to +\infty}CF_{*,[a,b)}(H_{n,t}))\cong H(\varprojlim_{b\to +\infty}\varinjlim_{a\to -\infty}tel_*(\mathcal{C})_{[a,b)})
    $$
    where the later is Equation $(\ref{eq:defSH})$ form  page~\pageref{eq:defSH} up to a grading shift.
\end{proof}

\begin{remark}
An inverse system of finite-dimensional vector spaces always satisfy the Mittag-Leffler condition, which implies the vanishing of the $\varprojlim\nolimits^1$-term. However, an inverse system of finitely-generated free $\ZZ$-modules may not satisfy the Mittag-Leffler condition, see Remark 2.3 in \cite{ACF}.
\end{remark}

Now we prove the claims used above. The first one is geometric.

\begin{lemma}\label{l:finite}
    For particularly chosen $\{H_{n,t}\}$, we have that
    $$
    \varinjlim_{n\to +\infty}CF_{*,[a,b)}(H_{n,t}), \quad H(\varinjlim_{n\to +\infty}CF_{*,[a,b)}(H_{n,t}))
    $$ 
    are finite-dimensional vector spaces over $\KK$ for any fixed $a,b$.
\end{lemma}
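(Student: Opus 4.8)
The plan is to reduce the statement to a single uniform estimate: that the number of $1$-periodic orbits of $H_{n,t}$ whose action lies in the fixed window $[a,b)$ is bounded by a constant $D=D(a,b)$ independent of $n$. Granting this, $\dim_\KK CF_{*,[a,b)}(H_{n,t})\le D$ for every $n$, and since $\varinjlim_{n}CF_{*,[a,b)}(H_{n,t})$ is a filtered colimit of $\KK$-vector spaces each of dimension $\le D$ and every finite subset of the colimit lies in the image of some $CF_{*,[a,b)}(H_{n,t})$ — which itself has image of dimension $\le D$ — the colimit has dimension $\le D$; in particular it is finite-dimensional, and then so is its homology. No stabilization of the continuation maps is needed for this, only the uniform rank bound, so the whole content is the orbit count.

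To get the count I would classify the $1$-periodic orbits of $H_{n,t}$ by the region in which they sit, using the profile of $H_{n,t}$ fixed in the construction: (i) critical points of the Morse function on the flat part of $\bar{M}$; (ii) critical points of the $C^2$-small Morse function on $\operatorname{int}\bar{W}$; (iii) orbits in the collar $[1-\epsilon,1]\times\partial_-\bar{W}$, which — once the slope $S_n$ of the linear piece exceeds a given period — come in a concave family and a convex family, each indexed by Reeb orbits of $(\partial_-\bar{W},\bar\eta\!\mid_{\partial_-\bar{W}})$ of period below $S_n$; and (iv) orbits on the cylindrical end $[1,\infty)\times\partial_+\bar{W}$, indexed by Reeb orbits of $(\partial_+\bar{W},\bar\eta\!\mid_{\partial_+\bar{W}})$ of period below the slope at infinity. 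Morse–Bott families of Reeb orbits contribute, after the time-dependent perturbation, a bounded number of non-degenerate orbits of nearby action, which affects none of the bounds below. I would then estimate actions via Viterbo's $y$-intercept formula. Writing $L_n:=\|H_{n,t}\|_{C^0(\bar{M})}\to\infty$: type (i) orbits have action $\approx-L_n$; type (ii) have action $\to 0$; the concave family in (iii) has action $\approx-L_n-T$; the convex family in (iii) has action $\approx-T$ up to an error $O(\epsilon T)$ coming only from the fixed collar width; and type (iv) has action $\approx+T$ up to a similarly controlled error, provided the convex ramp near $\partial_+\bar{W}$ is localized near $\rho=1$. Consequently, for $n$ large all orbits of type (i) and of the concave family in (iii) have action $<a$; type (iv) orbits with action $<b$ force $T$ bounded in terms of $b$; the convex family in (iii) with action $\ge a$ forces $T$ bounded in terms of $a$; and type (ii) contributes finitely many orbits, eventually independent of $n$. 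Only finitely many Reeb orbits — independently of $n$ — survive each condition, which yields the uniform bound $D(a,b)$.

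The delicate point, and the reason the lemma is stated for \emph{particularly chosen} $\{H_{n,t}\}$, is precisely that the error terms in the action estimates for the bounded-period orbits of types (iii) and (iv) must be controlled uniformly in $n$, while the $\bar{M}$-orbits and the large-period concave collar orbits escape to $-\infty$ and the large-period convex-end orbits escape to $+\infty$. This is arranged by fixing the collar width $\epsilon$ once and for all, taking the convex increasing profile on $[1,\infty)\times\partial_+\bar{W}$ to reach its linear slope over a short $\rho$-interval near $1$, and placing the corners $R_n$ so that the linear pieces are long enough to contain the relevant orbits while the Hamiltonian values on $\bar{M}$ and on the concave side of the collar grow without bound. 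With these choices in place the estimates are routine consequences of the $y$-intercept formula together with the discreteness and unboundedness of $\operatorname{Spec}(\bar\eta\!\mid_{\partial_\pm\bar{W}})$, so the uniform orbit count, and hence the lemma, follows.
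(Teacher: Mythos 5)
Your proposal is correct and follows essentially the same strategy as the paper: reduce to a uniform, $n$-independent bound on $\dim_\KK CF_{*,[a,b)}(H_{n,t})$ (after which finiteness of the colimit and its homology is immediate), then classify the $1$-periodic orbits by region and estimate their actions via Viterbo's $y$-intercept formula so that, for the fixed window $[a,b)$ and $n$ large, only orbits arising from finitely many Reeb orbits of uniformly bounded period plus the interior Morse critical points can contribute. (Your action signs are opposite to the paper's convention $\mathcal{A}=\int H\,dt+\int u^*\omega$, but this is internally consistent and harmless.) The one step you assert but do not arrange is the uniformity of your type (ii) contribution; the paper achieves this by building the acceleration data so that $H_{n,t}+s_n=H_{n+1,t}$ on $\operatorname{int}\bar{W}$ for small constants $s_n>0$, so that the interior critical points literally coincide for all $n$ — worth making explicit, since otherwise a sequence of $C^2$-small Morse functions has no a priori bound on its number of critical points.
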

\begin{proof}
    When constructing $\{H_{n,t}\}$, we can further assume the following. In the interior of $\bar{W}$, we have $H_{n,t}+ s_n=H_{n+1,t}$ for all $n\geq 1$ and some small positive number $s_n$. Hence the numbers of lower constant orbits of $H_{n,t}$ are the same for all $n$.
    
    On the other hand, the lower non-constant orbits of $H_{n,t}$ come from perturbations of Reeb orbits on $\partial_-\bar{W}, \partial_+\bar{W}$. Each Reeb orbit gives two Hamiltonian orbits after perturbation. Since we assume the contact forms on $\partial_-\bar{W}, \partial_+\bar{W}$ are non-degenerate, there are only finitely many Reeb orbits with action less than a given bound. Hence for any $a,b$, the numbers of lower one-periodic orbits of $H_{n,t}$ with actions in $[a,b)$ are uniformly bounded from above, independent of $n$. 
    
    For upper orbits of $H_{n,t}$, our special construction says that they all have actions larger than $n$. Therefore they escape from the action window $[a,b)$ when $n$ is large.
    
    In conclusion, for any $a,b$ the complex $CF_{*,[a,b)}(H_{n,t})$ is a finite-dimensional vector space over $\KK$. When we vary $n$, their dimensions are uniformly bounded from above. Hence the direct limit is finite-dimensional and so is its homology.
\end{proof}

The second claim is algebraic.  For any $a<b$, recall $tel^*(\mathcal{C})_{[a,b)}:= tel^*(\mathcal{C})_{\geq a}/tel^*(\mathcal{C})_{\geq b}$ by using the $\min$-action. On the other hand, we have a Floer one-ray
$$
\mathcal{C}_{[a,b)}:= CF^*_{[a,b)}(H_1)\to CF^*_{[a,b)}(H_2)\to \cdots.
$$
One can check that $tel^*(\mathcal{C})_{[a,b)}= tel^*(\mathcal{C}_{[a,b)})$. Therefore an equivalent definition of the completion is
\begin{equation}\label{eq:defSH2}
   \widehat{tel^*}(\mathcal{C})= \varprojlim_{b\to +\infty}\varinjlim_{a\to -\infty}tel^*(\mathcal{C}_{[a,b)}).
\end{equation}

\begin{lemma}\label{l:quasi-comp}
The two complexes
$$
\varprojlim_{b\to +\infty}\varinjlim_{a\to -\infty}tel^*(\mathcal{C}_{[a,b)})\quad \text{and}\quad \varprojlim_{b\to +\infty}\varinjlim_{a\to -\infty}\varinjlim_{n\to +\infty} CF^*_{[a,b)}(H_n)
$$
are quasi-isomorphic. The two complexes
$$
\varinjlim_{a\to -\infty}\varprojlim_{b\to +\infty}tel^*(\mathcal{C}_{[a,b)})\quad \text{and}\quad \varinjlim_{a\to -\infty}\varprojlim_{b\to +\infty}\varinjlim_{n\to +\infty} CF^*_{[a,b)}(H_n)
$$
are quasi-isomorphic. The two complexes 
$$
\varprojlim_{b\to +\infty}\varinjlim_{a\to -\infty}tel^*(\mathcal{C}_{[a,b)})\quad \text{and}\quad \varinjlim_{a\to -\infty}\varprojlim_{b\to +\infty}tel^*(\mathcal{C}_{[a,b)})
$$
are isomorphic.
\end{lemma}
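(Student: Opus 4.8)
The plan is to reduce all three assertions to two standard facts. The first is that, for any one-ray $\mathcal{D}=(D_1\to D_2\to\cdots)$ of cochain complexes, the telescope $tel^*(\mathcal{D})$ is the mapping cone of the (degreewise injective) map $\mathrm{id}-\mathrm{sh}\colon\bigoplus_n D_n\to\bigoplus_n D_n$ built from the structure maps of $\mathcal{D}$, whose cokernel is $\varinjlim_n D_n$; consequently the canonical projection $tel^*(\mathcal{D})\to\varinjlim_n D_n$ is a quasi-isomorphism, natural in $\mathcal{D}$. The second is that, fixing a cohomological degree and replacing $\varprojlim_b$ by the limit over a countable cofinal sequence $b_1<b_2<\cdots\to+\infty$, the functor $\varprojlim_b$ carries a levelwise quasi-isomorphism of towers of cochain complexes to a quasi-isomorphism \emph{provided both towers have surjective transition maps}: the mapping cone of such a map is then a tower of acyclic complexes with surjective transition maps, so its $\varprojlim^1$ vanishes (Mittag--Leffler) and its $\varprojlim$ is acyclic by the Milnor exact sequence \cite{We}, while $\varprojlim$ commutes with the formation of mapping cones.

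Granting these, I would prove the first statement as follows. For each fixed $b$, apply the natural quasi-isomorphism of the first fact to the one-ray $\mathcal{C}_{[a,b)}$ and then apply the exact functor $\varinjlim_a$ (a filtered colimit of modules), obtaining a quasi-isomorphism
\[
\varinjlim_a tel^*(\mathcal{C}_{[a,b)})\ \xrightarrow{\ \simeq\ }\ \varinjlim_a\varinjlim_n CF^*_{[a,b)}(H_n),
\]
which by naturality is a map of towers indexed by the $b_i$. The transition maps of these towers are colimits of the action-truncation projections $CF^*_{[a,b')}(H_n)\twoheadrightarrow CF^*_{[a,b)}(H_n)$, hence degreewise surjective, so the second fact applied to $\varprojlim_b$ finishes the first statement. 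The second statement is the mirror image: for each fixed $a$, first apply $\varprojlim_b$ to the natural quasi-isomorphism (legitimate since $\{tel^*(\mathcal{C}_{[a,b)})\}_b$ and $\{\varinjlim_n CF^*_{[a,b)}(H_n)\}_b$ are again surjective towers), obtaining a quasi-isomorphism natural in $a$, and then apply the exact functor $\varinjlim_a$.

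For the third statement I would identify both sides with the completion $\widehat{tel^*}(\mathcal{C})$ of $(\ref{eq:defSH2})$. Since every element of $tel^*(\mathcal{C})$ is a finite sum of Floer chains and hence has finite $\min$-action, we have $\bigcup_a tel^*(\mathcal{C})_{\ge a}=tel^*(\mathcal{C})$, so $\varinjlim_a tel^*(\mathcal{C}_{[a,b)})=tel^*(\mathcal{C})/tel^*(\mathcal{C})_{\ge b}$ and therefore $\varprojlim_b\varinjlim_a tel^*(\mathcal{C}_{[a,b)})=\widehat{tel^*}(\mathcal{C})$. For the other order, $\varprojlim_b tel^*(\mathcal{C}_{[a,b)})$ is the completion of the subcomplex $tel^*(\mathcal{C})_{\ge a}$ at the induced filtration, and the inclusions $tel^*(\mathcal{C})_{\ge a}\hookrightarrow tel^*(\mathcal{C})$ induce maps $\varprojlim_b tel^*(\mathcal{C}_{[a,b)})\to\widehat{tel^*}(\mathcal{C})$ that are injective because $tel^*(\mathcal{C})_{\ge a}/tel^*(\mathcal{C})_{\ge b}\hookrightarrow tel^*(\mathcal{C})/tel^*(\mathcal{C})_{\ge b}$ for $a\le b$. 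I would then check these maps are collectively surjective: given a coherent family $(y_b)$ representing a class in $\widehat{tel^*}(\mathcal{C})$, choose an honest chain $\tilde y$ lifting $y_{b_0}$ and set $a_0:=\min\{\nu(\tilde y),b_0\}$, where $\nu$ denotes the $\min$-action; for $b\ge b_0$ every lift of $y_b$ differs from $\tilde y$ by an element of $tel^*(\mathcal{C})_{\ge b_0}\subseteq tel^*(\mathcal{C})_{\ge a_0}$, hence lies in $tel^*(\mathcal{C})_{\ge a_0}$, so $(y_b)$ comes from $\varprojlim_b tel^*(\mathcal{C}_{[a_0,b)})$. Thus $\varinjlim_a\varprojlim_b tel^*(\mathcal{C}_{[a,b)})=\widehat{tel^*}(\mathcal{C})$ as well, and the asserted isomorphism follows.

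The main obstacle is the commutation of cohomology with $\varprojlim_b$ in the first two statements, i.e.\ the vanishing of the relevant $\varprojlim^1$-terms: this is precisely the Mittag--Leffler issue that obstructs naive inverse limits elsewhere in the paper (and that forced the homotopy-inverse-limit construction $tel^*_\leftarrow$), but here it is harmless because truncating the telescope by action makes every transition map an honest surjection. Everything else is formal manipulation of filtered colimits, inverse limits of towers, and the elementary behaviour of completions.
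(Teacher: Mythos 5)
Your proof is correct and complete, though it takes a more self-contained route than the paper. The paper's own proof is essentially a citation: it appeals to Lemma~2.2.4 of Varolgunes' thesis for the first two quasi-isomorphisms (noting that the Novikov-filtration argument there is purely algebraic, hence applies to the action filtration), and to Theorem~3.2 and Lemma~4.2 of Cieliebak--Frauenfelder for the third, where those results handle exactly the abstract statement about interchanging $\varinjlim_a$ and $\varprojlim_b$ for a filtered complex. What you do differently is to unpack the content of those citations. You reduce the first two statements to two reusable facts: that the projection $tel^*(\mathcal{D})\to\varinjlim_n D_n$ from the telescope to the colimit is a natural quasi-isomorphism (so that $\varinjlim_a$, being a filtered colimit of modules, carries it to another one), and that $\varprojlim_b$ sends a levelwise quasi-isomorphism of surjective towers to a quasi-isomorphism, by taking cones and running the Milnor sequence with Mittag--Leffler. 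This last point deserves emphasis: the place where $\varprojlim_b$ could fail to preserve quasi-isomorphisms is exactly the $\varprojlim^1$-issue that forces the homotopy inverse limit elsewhere in the paper, and your argument makes transparent why it is harmless here (action truncation makes every $b$-transition an honest surjection, killing the obstruction). For the third statement, your direct identification of both sides with $\widehat{tel^*}(\mathcal{C})$, using that elements of $tel^*(\mathcal{C})$ are finite sums and hence have finite $\min$-action, is exactly the kind of elementary completion bookkeeping the Cieliebak--Frauenfelder citations encode; writing it out makes the paper more self-contained at the cost of a little length. In short: same underlying mathematics, but a fully explicit argument replacing two black-box citations.
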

\begin{proof}

The first two quasi-isomorphisms follow from Lemma 2.2.4 in \cite{Var}, where a Novikov filtration is used. However the proof is purely algebraic in nature, and hence it works for our action filtration. The third isomorphism follows from Theorem 3.2 and Lemma 4.2 in \cite{CF}, since the two limits are taken with respect to the same filtration.
\end{proof}

The direct limit $\varinjlim_{n} CF^*_{[a,b)}(H_n)$ can be verified to be finite-dimensional for fixed $a,b$ in certain cases, as we have shown above. However the telescope model is infinite-dimensional even when we fix an action window. This finishes the proof of Proposition \ref{p:filled}.

\subsubsection{Rabinowitz Floer homology}

Let $M$ be a Liouville manifold which is the symplectic completion of a Liouville domain $\bar{M}$. Then one can define the Rabinowitz Floer homology $RFH_*(\partial\bar{M})$ of $\partial\bar{M}$ in $M$. It is shown in \cite{CFO,CO} that $RFH_*(\partial\bar{M})$ is isomorphic to the symplectic homology $SH_*(\partial\bar{M})$ of the trivial cobordism $\partial\bar{M}\times[1,1+\epsilon]$ in $M$, filled by $\bar{M}$. Therefore Proposition \ref{p:filled} gives the following corollary. 

\begin{corollary}\label{c:RFH}
    Over any field $\KK$, we have an isomorphism
    $$
    RFH_*(\partial\bar{M};\KK)\cong SH^*_M(\partial\bar{M};\KK),
    $$
    up to a grading shift.
\end{corollary}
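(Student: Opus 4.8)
The plan is to deduce the isomorphism almost formally, by chaining together Proposition \ref{p:filled} with the identification of Rabinowitz Floer homology with the symplectic homology of a trivial cobordism. First I would invoke the result of Cieliebak--Frauenfelder--Oancea and Cieliebak--Oancea recalled just above: over a field $\KK$ there is an isomorphism $RFH_*(\partial\bar M;\KK)\cong SH_*(\partial\bar M;\KK)$, where the right-hand side denotes the symplectic homology, in the sense of Definition \ref{d:cob}, of the trivial Liouville cobordism $\bar W:=\partial\bar M\times[1,1+\epsilon]$ equipped with the filling $\bar M$. No new input is needed for this step.

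Next I would apply Proposition \ref{p:filled} to this filled cobordism. The glued Liouville domain $\bar Y=\bar M\cup\bar W$ is just $\bar M$ with an outward collar attached, so its completion $Y$ is canonically Liouville isomorphic to $M$, and under this isomorphism the support $\bar W\subset Y$ corresponds to a thin collar neighborhood of $\partial\bar M$ in $M$. Proposition \ref{p:filled} then gives $SH_*(W;\KK)\cong SH^*_Y(\bar W;\KK)$ up to a grading shift; composing with the previous step yields $RFH_*(\partial\bar M;\KK)\cong SH^*_M(\partial\bar M\times[1,1+\epsilon];\KK)$ up to a grading shift.

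It then remains to identify $SH^*_M(\partial\bar M\times[1,1+\epsilon];\KK)$ with $SH^*_M(\partial\bar M;\KK)$, and this is the step I expect to be the only genuinely Floer-theoretic one. I would run a sandwiching argument in the spirit of \cite{Var21, GV, GV2}: the restriction map $SH^*_M(\partial\bar M\times[1,1+\epsilon])\to SH^*_M(\partial\bar M)$ induced by the inclusion $\partial\bar M\subset\partial\bar M\times[1,1+\epsilon]$ can be computed from acceleration data built out of V-shaped Hamiltonians whose valleys shrink from the collar down to the hypersurface, and on these models the induced map of completed telescopes is seen to be a quasi-isomorphism --- equivalently, for fixed $\epsilon$ the flat collar contributes only a Morse complex of $\partial\bar M\times[1,1+\epsilon]\simeq\partial\bar M$, so the group is already independent of $\epsilon$. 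The remaining grading shift is pure bookkeeping, reconciling the homological conventions of \cite{CO} with the cohomological conventions used here through the Conley--Zehnder index, and I do not expect it to cause any difficulty.
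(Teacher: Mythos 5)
Your proposal is correct and follows the paper's route precisely: invoke the Cieliebak--Frauenfelder--Oancea and Cieliebak--Oancea identification of $RFH_*(\partial\bar M)$ with the symplectic homology of the trivial filled cobordism $\partial\bar M\times[1,1+\epsilon]$, then apply Proposition~\ref{p:filled} with $Y\cong M$. The only difference is that you explicitly justify the final identification $SH^*_M(\partial\bar M\times[1,1+\epsilon];\KK)\cong SH^*_M(\partial\bar M;\KK)$ --- which the paper leaves implicit when passing from the cobordism to the hypersurface --- and your sandwiching argument for it is the standard one from \cite{Var21}.
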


\bibliographystyle{amsplain}

\end{document}